\documentclass[11pt]{amsart}

\title[Time-frequency concentration problems for the Cohen class via QHA]{A quantum harmonic analysis approach to nonlinear time-frequency concentration}
\author{Erling A. T. Svela}\thanks{Norwegian University of Science and Technology, \url{erling.a.t.svela@ntnu.no}}
\author{S. Ivan Trapasso}\thanks{Politecnico di Torino, \url{salvatoreivan.trapasso@polito.it}}
\date{}

\usepackage{graphicx} 

\usepackage{amsthm}
\usepackage{amssymb}
\usepackage{amsmath,mathtools}
\mathtoolsset{showonlyrefs}
\usepackage{xcolor}
\usepackage[style=ext-numeric,
            sorting=nyt,
            sortcites=true,
            maxnames=50,
            giveninits=true,
            articlein=false]{biblatex}
\usepackage{physics}
\bibliography{Bibliography}
\usepackage{mathrsfs}
\usepackage{hyperref}
\usepackage[title]{appendix}
\usepackage{microtype}

\usepackage{nicematrix}

\calclayout

\usepackage{geometry}
\newtheorem{theorem}{Theorem}[section]
\newtheorem{prop}[theorem]{Proposition}
\newtheorem{corollary}[theorem]{Corollary}
\newtheorem{lemma}[theorem]{Lemma} 

\theoremstyle{definition}
\newtheorem{definition}[theorem]{Definition} 
\newtheorem{example}{Example}[section]

\theoremstyle{remark}
\newtheorem{remark}[theorem]{Remark} 

\usepackage{verbatim}

\newcommand{\C}{\mathbb{C}}
\newcommand{\R}{\mathbb{R}}

\newcommand{\Z}{\mathbb{Z}}
\newcommand{\N}{\mathbb{N}}

\renewcommand{\S}{\mathcal{S}}

\newcommand{\D}{\mathcal{D}}

\DeclareFontFamily{U}{mathx}{}
\DeclareFontShape{U}{mathx}{m}{n}{<-> mathx10}{}
\DeclareSymbolFont{mathx}{U}{mathx}{m}{n}
\DeclareMathAccent{\widehat}{0}{mathx}{"70}
\DeclareMathAccent{\widecheck}{0}{mathx}{"71}

\newcommand{\cF}{\mathcal{F}}
\newcommand{\cB}{\mathcal{B}}
\newcommand{\Cu}{\mathcal{C}_{\mathrm{u}}^\alpha}

\def\cS{\mathcal{S}}
\def\Schw{\mathscr{S}}
\def\sS{\mathscr{S}}
\def\cK{\mathcal{K}}
\def\cB{\mathcal{B}}
\def\cH{\mathcal{H}}

\def\rd{\mathbb{R}^d}
\def\rdd{\mathbb{R}^{2d}}

\def\cBsup{\cB_{\mathrm{sup}}(p,\Omega)}
\def\cBwc{\cB_{\mathrm{wc}}(p,\Omega)}
\def\cBwtn{\cB_{\mathrm{wtn}}(p,\Omega)}
\def\cBubd{\cB_{\mathrm{ubd}}(p,\Omega)}
\def\cBpt{\cB_{\mathrm{pt}}}
\def\cG{\mathcal G(p,\Omega)}
\def\cTatt{\mathcal T_{\mathrm{att}}(p,\Omega)}
\def\cTloss{\mathcal T_{\mathrm{loss}}(p,\Omega)}
\def\NRA{\mathrm{NRA}}

\renewcommand{\dd}[1]{\,\mathrm{d} #1}

\newcommand{\smo}{\setminus\{0\}}

\newcommand{\BJ}{W_{\mathrm{BJ}}}

\begin{document}
\begin{abstract}
We study nonlinear concentration problems for time-frequency distributions in the Cohen class. Using recent techniques from quantum harmonic analysis (QHA) we provide both positive and negative results, such as sufficient conditions for the existence of optimizers in terms of the ``window operator'' and explicit examples where the supremum is never attained. We also study the structural properties of window operators, in particular operators that yield weakly continuous concentration functionals and operators for which the nonlinear concentration problem admits an optimizer, also beyond the Heisenberg representation. We then consider generalizations to the study of concentration problems for phase space representations of operators. We consider generalized Husimi distributions via quantum convolution, and their optimization problem when optimizing over Hilbert--Schmidt and density operators. Lastly, we consider representations of operators on double phase space, in the spirit of quantum time-frequency analysis, and give a full solution in terms of the Weyl symbols.
\end{abstract}
\subjclass[2020]{81S30, 42B10, 49Q10, 49R05, 94A12} 
\keywords{Nonlinear time-frequency concentration, optimization, Cohen's class, quantum harmonic analysis, localization}

\maketitle

\tableofcontents

\section{Introduction}
\subsection{Concentration problems for the Cohen class}
Phase space representations of functions and operators are of great use in several problems of signal analysis and quantum mechanics. The archetypical example, and historically also the first one, is the \textit{Wigner distribution}, which is defined for $f \in L^2(\rd)$ by 
\begin{align*}
    Wf(z)=\int_{\R^d}e^{-2\pi i \xi \cdot y} f\left(x+\frac{y}{2}\right)\overline{f\left(x-\frac{y}{2}\right)} \dd{y}, \qquad z=(x,\xi) \in \rdd.
\end{align*} This can be rightfully viewed as the restriction to the diagonal of the sesquilinear polarization $W(f,g)$, with $f,g \in L^2(\rd)$, usually known as the \textit{cross-Wigner distribution}:
\[
W(f,g)(z)=\int_{\R^d}e^{-2\pi i \xi \cdot y} f\left(x+\frac{y}{2}\right)\overline{g\left(x-\frac{y}{2}\right)} \dd{y} \qquad z=(x,\xi) \in \rdd.
\]
Introduced by Wigner in 1932~\cite{Wigner}, the Wigner distribution was ultimately an attempt at making a joint probability density between \(|f|\) and \(|\hat{f}|\) on phase space. As such, it has indeed many of the expected features, including the marginal properties
\begin{align*}
    \int_{\R^d} Wf(x,\xi) \dd{\xi}=|f(x)|^2,\quad \int_{\R^d} Wf(x,\xi) \dd{x}=|\hat{f}(\xi)|^2,
\end{align*} at least when such relations are meaningful, e.g., if $f,\hat{f}\in L^1(\rd)\cap L^2(\rd)$. However, the Wigner distribution also suffers from defects that prevent it from being interpreted as a genuine phase space energy density. Notably, the Wigner distribution is in general not a positive function. Moreover, the Wigner distribution of a sum \(W(f+g)\) can look significantly different from the sum of the Wigner distributions \(Wf+Wg\), due to the substantial interferences represented by the cross-terms \(W(f,g)\) and \(W(g,f)\). 

This has led to the study of different time-frequency distributions, where one tries to keep the good features of the Wigner distribution while mitigating the negative ones as much as possible. In keeping with this spirit, a natural way to tame negativity and interferences is to carry out some form of local averaging in phase space, for instance in terms of convolution with a suitably designed smoothing kernel. This is precisely the rationale behind the \textit{Cohen class} of quadratic time-frequency distributions \cite{Cohen66,cohen1995time}:
\[ Q_\sigma (f,g) \coloneqq W(f,g)*\sigma, \qquad \sigma \in \mathscr{S}'(\R^{2d}), \, f,g \in L^2(\rd). \] 
We abbreviate $Q_\sigma f$ in the case where $g=f$. Like the Wigner transform, these distributions are \textit{covariant} with respect to time-frequency shifts: Setting $\pi(z)f(y)=e^{2\pi i \xi \cdot y} f(y-x)$ for the phase space shift of along $z=(x,\xi) \in \rdd$, this means that 
\[ Q_\sigma (\pi(z)f)(w) = Q_\sigma f(w-z), \qquad w \in \rdd.  \]
In fact, it can be shown~\cite{Grochenig} that any covariant quadratic time-frequency distribution which satisfies a certain continuity assumption (that is, weak-* continuity in each argument of the corresponding sesquilinear distribution, see~\cite{GSMetaplectic}) falls within Cohen's class. While one may be inclined to believe that careful kernel design is eventually able to isolate an optimal time-frequency distribution in this family, many of the desirable properties turn out to be mutually incompatible~\cite{JanssenSurvey}. This justifies the study of Cohen's class in its entirety. 

As part of a recent series by the authors and collaborators \cite{Nicola-Romero-Trapasso-2022,Stra-Svela-Trapasso-2025,Stra-Svela-Trapasso-2026}, this paper is concerned with nonlinear time-frequency concentration problems for Cohen's class distributions: Given a phase space subset $\Omega\subset\rdd$ of positive finite Lebesgue measure, $1 \le p \le \infty$, and a time-frequency distribution \(Q_\sigma\) in Cohen’s class, we are interested in guaranteeing the existence of optimizers for the problem
\begin{align}\label{smallProblem}
    \sup_{f\in L^2(\R^d)\setminus\{0\}}\frac{\norm{Q_\sigma f}_{L^p(\Omega)}}{\|f\|_{L^2}^2}.
\end{align} Concentration problems of many types are broadly studied in time-frequency analysis, as they can be thought of as quantitative formulations of the uncertainty principle --- in fact, we may interpret an optimizer of the quotient~\eqref{smallProblem} as an uncertainty minimizer, if uncertainty is measured in terms of the \textit{local} \(L^p\)-norm (see \cite{Lieb} for the global Wigner case, and also \cite{lerner} for related local problems). 

As anticipated, this problem has been addressed only recently for the most popular distributions, namely in \cite{Nicola-Romero-Trapasso-2022} for the ambiguity transform, in \cite{Stra-Svela-Trapasso-2025} for (generalized) Wigner distribution and in \cite{Stra-Svela-Trapasso-2026} for the Born--Jordan distribution (see \cite{CdGN17} for the explicit expression of $\sigma$, whose symplectic Fourier transform reads $\operatorname{sinc}(\pi x \cdot\xi)$). Each of these results comes with distinctive difficulties and peculiar behaviors, although the underlying approach relies on a common template exploiting concentration compactness to isolate and control the possible loss of phase space mass in maximizing sequences. 

Motivated by the suggestive evidences emerging in these investigations, we decided to complement our analysis by focusing on the Cohen class by means of a more systematic approach. We stress that this is far from a trivial generalization of what is already known about the Wigner case, as even the tiniest modification (e.g., the $\tau$-Wigner distributions) may break the delicate structure of the proofs and produce new interference patterns, which in turn are expected to reflect in different concentration behaviors for \eqref{smallProblem} --- see \cite{Stra-Svela-Trapasso-2025}. In this connection, let us highlight that the main technical difficulties in the Wigner case stemmed precisely from the energy contribution of constructive interference phenomena that ultimately occur due to time-frequency covariance of this phase-space representation --- compare with the case of the ambiguity transform in \cite{Nicola-Romero-Trapasso-2022}, which is invariant instead. Since every Cohen's class distribution inherits covariance from the Wigner one, this might naively suggest that such (already non-trivial) complications may only get worse. In fact, we show below that this is not always the case, in line with the heuristic mitigating effect of kernel convolution. In general, the Cohen class is flexible enough to give us the opportunity to detect clusters of sharply different phenomena. 

\subsection{The QHA viewpoint} A relevant difference with respect to the other articles of the series is in the technical apparatus. Indeed, here we make extensive use of the framework of quantum harmonic analysis (QHA), first introduced by Werner in \cite{Werner} and recently studied by Luef and collaborators~\cite{Bible1,FLW26,BBLS22,OpSTFT,LMcN025,FHL24} in the context of time-frequency analysis. In particular, quantum harmonic analysis has turned out to be particularly suited to investigations related to the Cohen class ~\cite{CohenClass,DaubechiesExt,LS20}. The main purpose of QHA is to parallel harmonic analysis of functions and distributions at the level of operators, hence providing operator-theoretic versions of notions like convolution and the Fourier transform. For instance, the convolution between trace class operators $T,S$ on $L^2(\rd)$ is the function defined by
\[ T\star S(z) \coloneqq \tr\big(T\alpha_z(\widecheck{S})\big), \qquad z \in \rdd, \]
where we introduced the \textit{operator shift} $\alpha_z(A)=\pi(z)A\pi(z)^*$ and the \textit{operator reflection} $\widecheck{A}=PAP$, that is the conjugation with the parity operator \(P\) defined by \(Pf(y)=f(-y) \). This definition extends to more general families of operators, including Schatten classes --- we address the reader to Section~\ref{sec:Prelim} below for a more detailed outline. In any case, this program is far from being a mere exercise in style aimed at recovering known results from a different viewpoint: Unlocking the paradigm of Fourier analysis in the non-commutative world of operators yields a considerably powerful machinery. We have already witnessed new solutions to old (function-level) problems originating from these additional degrees of freedom, as well as inherently original problems which have attracted attention in the last few years~\cite{LS25,FHL24,Luef-Skrettingland-2021,FLW26,S20}. 

Let us now illustrate how these remarks are particularly relevant to concentration problems discussed before. The key conceptual leap allowed by QHA in this context comes from a result in \cite{CohenClass}, where the authors showed that action on $f \in \sS(\rd)$ of distributions in Cohen's class like \(Q_\sigma\) actually coincide with the operator convolution between the rank-one projection $f \otimes f \colon L^2(\rd) \ni h \mapsto \langle h,f\rangle f$  and the Weyl pseudodifferential operator with symbol $\sigma$ (cf.\ Section \ref{sec:Prelim}), that is 
\[ Q_\sigma f= Wf * \sigma = (f\otimes f)\star L_\sigma, \qquad f \in \sS(\rd), \quad \sigma \in \sS'(\rdd).  \] 
A genuine QHA angle on this problem requires shifting the main focus from the Cohen kernel $\sigma$ to the corresponding operator counterpart, and rather consider the Cohen-type distributions like
\[ Q_S f = (f\otimes f)\star \widecheck{S} = \tr\bigl((f\otimes f)\alpha_z(S)\bigr)=\langle \alpha_z(S)f,f\rangle  \] indexed by smoothing operators $S$ with Weyl symbol in $\sS'(\rdd)$, so that $Q_\sigma f$ corresponds to $Q_Sf$ with $S=\widecheck{L_\sigma}$. The advantages of this reformulation lie in that rank-one operators are generally better behaved than the Wigner function. For instance, while the standard projection \(f\otimes f \) is trace class for all \(f\in L^2(\R^d)\), the parallel condition \(Wf\in L^1(\R^{2d})\) requires the stronger regularity assumptions on $f$, which must belong in Feichtinger's algebra \(M^1\)~\cite{Fei}. Similarly, since \(f\otimes f\) is positive for all \(f\in L^2(\R^d)\), \(Q_Sf\) will be positive as long as \(S\) is. On the other hand, the positivity of \(Q_\sigma\) is an extremely delicate issue, ultimately equivalent to the positivity of the Weyl quantization of \(\sigma\) and thus linked to the subtle KLM conditions~\cite{Kastler,LM1,LM2}. For additional examples in this vein, the interested reader may consult \cite[Sections 7--9]{CohenClass}.

We will therefore be concerned with the following optimization problem: For $1 \le p \le \infty$, $\Omega \subset \rdd$ with $0 < \abs{\Omega} < \infty$ and $S \in \cB(L^2(\rd))$, 
\begin{align}\label{Problem}
    \Lambda_{p,\Omega}(S) \coloneqq \sup_{f\in L^2(\R^d)\setminus\{0\}}\frac{\norm{Q_S f}_{L^p(\Omega)}}{\|f\|_{L^2}^2} = \sup_{\substack{f\in L^2(\rd) \\ \norm{f}_{L^2}^2=1}} \norm{Q_S f}_{L^p(\Omega)}. 
\end{align}
It is easy to realize that $0 \le \Lambda_{p,\Omega}(S) \le \abs{\Omega}^{1/p} \norm{S}_{\cB}$, with $\Lambda_{p,\Omega}(S)=0 \iff S=0$ (cf.\ Lemma \ref{lem:nonzero-window-nonvanishing} below). While \eqref{Problem} is exactly the same problem as \eqref{smallProblem}, the QHA viewpoint is arguably better suited here, due to the better summability and positivity properties of the rank-one operators \(f\otimes f\) compared to the Wigner distribution \(Wf\). In fact, the special case \(p=1\) and positive $S$ has already been successfully studied using QHA in~\cite{CohenClass,DaubechiesExt}. The general case is considerably harder, as \eqref{Problem} can no longer be recast as an eigenvalue problem due to the nonlinearity. 

As anticipated, the standard approach so far has been to treat each Cohen's class distribution separately. In some cases, such as the rank-one operator \(S=g\otimes g\) associated with the standard Gaussian $g(y)=e^{-\pi \abs{y}^2}$, this has led to some remarkable results~\cite{FaberKrahn} (including shape optimization for $\Omega$), with the obvious disadvantage that the inherently complex-analytic machinery unlocked by the Gaussian window hardly generalizes to other Cohen's class distributions. The pure Wigner case handled in ~\cite{Stra-Svela-Trapasso-2025} corresponds to $\sigma = \delta$, that is $S=2^dP$ in the QHA perspective. Nevertheless, one should take into account that the transition from \(Q_\sigma\) to \(Q_S\) also represents a change in perspective on which Cohen's class distribution is the ``fundamental'' one. Indeed, while the Wigner distribution (\(\sigma=\delta\)) is the building block of Cohen's class, from the QHA viewpoint this role is played by the elementary tensors \(S=h\otimes g\), hence the (generalized) spectrograms. 

\subsection{Main results} Generally speaking, a straightforward way to show the existence of an optimizer for a given functional is the direct method of calculus of variations \cite{CVBook}. In our setting, where the quotient in~\eqref{Problem} is homogeneous of degree \(0\), this scheme amounts to a generalization of the extreme value theorem: If \(X\) denotes the closed unit ball of \(L^2(\R^d)\), which is sequentially weakly compact by the Banach--Alaoglu theorem and reflexivity, a sufficient condition for the concentration functional 
\begin{align}
   J_S \colon X\to[0,\infty], \qquad J_S(f)=J_{S,\Omega}^p(f) \coloneqq \norm{Q_Sf}_{L^p(\Omega)}
\end{align} 
to have a maximizer is being weakly upper semicontinuous. We explicitly highlight that, despite being a natural approach, the direct method cannot be invoked in the cases of ambiguity \cite{Nicola-Romero-Trapasso-2022} and Wigner concentration \cite{Stra-Svela-Trapasso-2025} due to failure of semicontinuity, while continuity of the concentration functional holds in the case of the spectrograms and, less trivially, in the Born--Jordan scenario \cite{Stra-Svela-Trapasso-2025,Stra-Svela-Trapasso-2026}. 

An interesting problem is thus to investigate the interplay between the regularity of \(J_S\) and that of the window operator \(S\). In this regard, we are able to extend the aforementioned findings for the (subcritical) Born--Jordan case to a completely unrelated class of operator windows, namely compact ones. 
\begin{theorem}\label{CompactExistence}
 Let \(S \in \cK(L^2(\rd))\) be a compact operator, \(p \in [1,\infty)\), and \(\Omega\subset\R^{2d}\) be such that $0<\abs{\Omega}<\infty$. The concentration functional 
 \begin{align*}
        J_S \colon L^2(\rd) \to [0,+\infty), \qquad J_S(f) = \left(\int_{\Omega}|Q_Sf(z)|^p \dd{z}\right)^{1/p} 
    \end{align*}
    is sequentially weakly continuous on \(L^2(\R^d)\). As a consequence, the supremum $\Lambda_{p,\Omega}(S)$ in \eqref{Problem} is attained. Furthermore, if $S \ne 0$ then any normalized maximizing sequence has a subsequence strongly converging to a maximizer in \(L^2(\R^d)\).
\end{theorem}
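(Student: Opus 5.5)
The plan is to prove pointwise convergence of $Q_Sf_n$ together with a uniform bound, and then apply dominated convergence on the finite-measure set $\Omega$.

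\emph{Pointwise convergence.} Let $f_n \rightharpoonup f$ weakly in $L^2(\rd)$. By the uniform boundedness principle, $\sup_n\|f_n\|_{L^2}\le M$. Fix $z\in\rdd$. The operator $\alpha_z(S)=\pi(z)S\pi(z)^*$ is compact, since compact operators form an ideal and $\pi(z)$ is unitary. Compact operators map weakly convergent sequences to norm convergent ones, so $\alpha_z(S)f_n\to\alpha_z(S)f$ in norm. Writing
\[ Q_Sf_n(z)-Q_Sf(z)=\langle \alpha_z(S)f_n-\alpha_z(S)f,f_n\rangle+\langle \alpha_z(S)f,f_n-f\rangle, \]
the first term is bounded by $M\|\alpha_z(S)(f_n-f)\|\to0$. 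The second term tends to $0$ by weak convergence. Hence $Q_Sf_n(z)\to Q_Sf(z)$ for every $z$.

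\emph{Uniform bound and dominated convergence.} We have $|Q_Sf_n(z)|\le\|S\|_{\cB}M^2$. Since $|\Omega|<\infty$ and $p<\infty$, the constant $(\|S\|_{\cB}M^2)^p$ is integrable on $\Omega$. Dominated convergence then gives $\int_\Omega|Q_Sf_n|^p\to\int_\Omega|Q_Sf|^p$, which is sequential weak continuity of $J_S$. Measurability of $Q_Sf$ is not an issue, because $z\mapsto\alpha_z(S)$ is strongly continuous, so $Q_Sf$ is continuous.

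\emph{Existence of a maximizer.} Take a normalized maximizing sequence $(f_n)$. By reflexivity and Banach--Alaoglu, a subsequence converges weakly to some $f$ with $\|f\|_{L^2}\le1$, and $J_S(f)=\lim J_S(f_n)=\Lambda_{p,\Omega}(S)$. If $S=0$, any unit vector is a maximizer. If $S\neq0$, then $\Lambda_{p,\Omega}(S)>0$ by Lemma \ref{lem:nonzero-window-nonvanishing}, so $f\neq0$. By 2-homogeneity, $J_S(f/\|f\|)=J_S(f)/\|f\|^2\le\Lambda$, which forces $\Lambda\le\Lambda\|f\|^2$ and hence $\|f\|=1$. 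Then $f/\|f\|=f$ is a maximizer. Moreover, weak convergence together with convergence of norms ($\|f_n\|=1=\|f\|$) gives strong convergence in the Hilbert space $L^2$.

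The only genuinely delicate point is the use of compactness to upgrade weak convergence to norm convergence inside the sesquilinear form. This step is exactly where the argument breaks down for non-compact windows such as $2^dP$. Everything else is routine.
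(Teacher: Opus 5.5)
Your proof is correct and follows the paper's argument. The paper packages your compactness step as weak-$*$ convergence $f_n\otimes f_n \rightharpoonup^* f\otimes f$ in $\cS^1$ (Lemma~\ref{ConvergenceLemma}, proved by exactly your two-term splitting), tested against the compact operator $\alpha_z(S)$. It then uses the same uniform bound (obtained via Young's inequality instead of Cauchy--Schwarz), dominated convergence, and the same homogeneity argument to force $\|f\|_{L^2}=1$ and hence strong convergence.
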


The proof is remarkably short, because QHA exposes the decisive compactness mechanism: If $f_n\rightharpoonup f$, then $f_n\otimes f_n\to f\otimes f$ weak-* in trace class, and compactness of $S$ converts this into pointwise convergence of $Q_Sf_n$; dominated convergence then gives local $L^p$ convergence. The endpoint case $p=\infty$ is treated in Section \ref{sec-linfty}, where we prove that the optimal value $\Lambda_{\infty,\Omega}(S)$ coincides with the numerical radius $w(S)$ of $S \in \cB$. In particular, if $S$ is a numerical-radius-attaining operator \cite{berg}, the supremum $\Lambda_{\infty,\Omega}(S)$ is attained as well. 

Let us emphasize once more that the existence result for compact operators is far from being a general property in the Cohen class --- semicontinuity of the concentration functional fails for the Wigner distribution, as well as its closest relatives, the \(\tau\)-Wigner distributions. To the best of our knowledge, continuity of \(J_S\) was previously only known for the spectrograms (\(S=g\otimes h\))~\cite{Nicola-Romero-Trapasso-2022} and more recently, the Born--Jordan distribution~\cite{Stra-Svela-Trapasso-2025}. Theorem~\ref{CompactExistence} thus represents a significant step in the theory, as it provides an infinite class of new Cohen's class distributions whose concentration functional is weakly continuous. It also illustrates another advantage of Cohen's class: smoothing the Wigner distribution via a convolution can upgrade its properties, even on a topological level. 

The compact theorem is of course not the whole story, as several noncompact windows also admit optimizers. We prove this for positive compact perturbations of the identity and, more generally, for positive windows whose local mixed-state localization operator
\[ H_{\Omega,S}=\int_\Omega \alpha_z(S)\dd{z} \]
is compact. This identifies a genuinely \textit{local} compactness mechanism --- informally, the window $S$ need not be globally compact to admit concentration optimizers, it suffices that compactness effectively interacts with the finite phase-space region $\Omega$. 

On the other hand, there are operators which fail to sufficiently smooth the Wigner distribution, at least in the sense of a positive answer to Problem~\eqref{Problem}. In Section~\ref{sec:Negative} we provide two classes of counterexamples: We show that the quotient is bounded but never attained for the negative compact perturbation of the identity
\[ S=\mathrm{Id}-\varphi_0\otimes\varphi_0, \qquad \varphi_0(y)=2^{d/4}e^{-\pi |y|^2}, \]
and also for every nontrivial time-frequency shift $S=\pi(z_0)$, $z_0\ne0$. Together with the previously established results for the Wigner and Born--Jordan distributions~\cite{Stra-Svela-Trapasso-2025}, this shows that the situation is significantly more challenging for noncompact operators. 

While a full classification of smoothing and escape mechanisms in the Cohen class for $S \in \cB(L^2(\rd))$ remains an open problem (if at all possible), we contribute in this direction with an analysis of the structural properties behind the dichotomy. A key quantity in our analysis is the essential concentration value: 
\[ \Lambda^{\mathrm{ess}}_{p,\Omega}(S) \coloneqq \sup\left\{ \limsup_{n\to\infty}\|Q_Sf_n\|_{L^p(\Omega)}: \|f_n\|_{L^2}=1,\ f_n\rightharpoonup0 \right\}. \]
Roughly speaking, this quantity measures the amount of concentration that can remain in $\Omega$ while the states escape weakly, and thus provides a sufficient criterion for both continuity and attainment. To be specific, in Proposition~\ref{Prop:StrictGap} we show that \(\Lambda^{\mathrm{ess}}_{p,\Omega}(S)<\Lambda_{p,\Omega}(S)\) is sufficient for an optimizer to exist, while in Proposition~\ref{prop:wtn} we show that \(\Lambda^{\mathrm{ess}}_{p,\Omega}(S)=0\) is equivalent to a weakly continuous concentration functional. In spite of the general picture, these results have interesting concrete applications: For instance, in Appendix \ref{app-wig} we use them to rule out exotic optimizers (cf.\ \cite{Stra-Svela-Trapasso-2025} for context) for the Wigner concentration problem over sufficiently small or large phase space balls. 

A second structural contribution concerns the boundary between compactness and weak continuity: We prove that $\cB_{\mathrm{pt}}(p,\Omega)=\cK$, where $\cB_{\mathrm{pt}}(p,\Omega)$ consists of windows for which $Q_Sf_n(z)\to Q_Sf(z)$ pointwise under weak convergence $f_n\rightharpoonup f$. We also show that $\cB_{\mathrm{wc}}(p,\Omega)\cap \mathcal C_{\mathrm u}^\alpha=\cK$, where $\mathcal C_{\mathrm u}^\alpha$ is the class of operators uniformly continuous under phase-space shifts. As a result, any noncompact weakly continuous window must be singular from the QHA viewpoint: weak continuity cannot be detected pointwise and cannot come from uniform shift regularity. The Born--Jordan distribution provides one such singular mechanism, already implicitly exploited in \cite{Stra-Svela-Trapasso-2025,Stra-Svela-Trapasso-2026}. In dimension $d=1$, its window admits the averaged representation
\[ S_{\mathrm{BJ}}=\int_{\mathbb R}\frac{1}{2\cosh(s/2)}D_sP\dd s =\frac{\pi}{\cosh(\pi K)}P, \]
where $D_s$ is the dilation group and $K$ its self-adjoint generator. This representation explains Born--Jordan weak continuity as an averaging effect over metaplectic squeezes. We extend this mechanism to nonatomic averages of $\tau$-Wigner distributions and to a broad class of squeeze averages $\int h(s)D_sP\dd s$ with $h\in L^1(\mathbb R)$, obtaining noncompact examples with weakly continuous concentration functional. In the same spirit, we also show a completely different noncompact mechanism based on positive diagonal operators whose localized averages are compact, upgrading the previous remark about sufficiency of local compactness.

Phase space representations are also extremely useful tools for the analysis of operators, see for instance~\cite{KS07,Schupp,Husimi2,OpSTFT}. One of the most popular notion in this connection is the classical Husimi function: 
\[H_T(z)=\langle T\pi(z)\phi_0,\pi(z)\phi_0\rangle = T\star \widecheck{(\phi_0\otimes\phi_0)}(z),\]
showing that phase space representations of operators can also be studied through QHA. This angle also emphasizes the well-known connection between linear phase space representations of operators and quadratic phase space representations of functions, see for instance~\cite{FrankNicolaTilli,Lieb-Solovej}. With these observations in mind, we will therefore consider an analogue of Problem~\ref{Problem} for a linear representation of operators, namely \(T\star\widecheck{S}\), which should be thought of as a generalization of the Husimi function, and the quadratic self-representation \(\tilde{T}=T\star\widecheck{T}\). The representations \(T\star \widecheck{S}\) have previously been studied by Klauder and Skagerstam~\cite{KS07} and Luef and Skrettingland~\cite{LSBerezin}. Using an important QHA result of Luef and Skrettingland~\cite{LS20} we will show that, for generalized Husimi functions $T\star\widecheck S$, optimizing over Hilbert--Schmidt operators reduces, via Weyl symbols, to a fixed-window convolution problem. By contrast, the total correlation $T\star\widecheck T$ reduces to a genuine autocorrelation problem for Weyl symbols. Our main result is the following: 
\begin{prop}\label{Prop:HSExistence}
    Let \(p\in[1,\infty)\), \(\Omega\subset \R^{2d}\) be measurable with \(0<|\Omega|<\infty\), and let \(S\in \S^2\). Then, the supremum \begin{align*}
        \sup_{T\in\S^2\setminus\{0\}}\frac{\left(\int_{\Omega}|T\star\widecheck{S}(z)|^p \dd{z}\right)^{1/p} }{\|T\|_{\S^2}}
\end{align*}
    is attained, and the corresponding concentration functional is weakly continuous. Moreover, 
    \begin{align*}
        \sup_{T\in\S^2\setminus\{0\}}\frac{\left(\int_{\Omega}|T\star \widecheck{T}(z)|^p \dd{z}\right)^{1/p} }{\|T\|_{\S^2}^2} = |\Omega|^{1/p},
\end{align*}
but this supremum is not attained.
\end{prop}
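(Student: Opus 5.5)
Both parts rest on the Luef--Skrettingland identity \cite{LS20}: for Hilbert--Schmidt operators $T,S$ with Weyl symbols $a_T,a_S\in L^2(\rdd)$ we have $T\star\widecheck S=a_T*\widecheck{a_S}$ (up to the reflection/normalisation conventions of Section~\ref{sec:Prelim}), and Weyl quantisation is unitary from $L^2(\rdd)$ onto $\S^2$. This turns both suprema into problems for functions on $\rdd$. The first becomes maximisation of $\|a*b\|_{L^p(\Omega)}$ over $\|a\|_{L^2}=1$, with $b=\widecheck{a_S}\in L^2$ fixed. The second becomes maximisation of $\|a*\widecheck a\|_{L^p(\Omega)}$, essentially the autocorrelation of $a$.

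\textbf{First part.} I prove weak continuity and then apply the direct method on the unit ball of $L^2(\rdd)$, as in Theorem~\ref{CompactExistence}. Let $a_n\rightharpoonup a$ in $L^2$. For each fixed $z$,
\[
(a_n*b)(z)=\langle a_n,\overline{b(z-\cdot)}\rangle\to (a*b)(z),
\]
since $\overline{b(z-\cdot)}\in L^2$; so the convergence is pointwise everywhere. By Cauchy--Schwarz, $|a_n*b|\le \sup_n\|a_n\|_{L^2}\|b\|_{L^2}$, which is uniformly bounded. As $|\Omega|<\infty$, dominated convergence gives $a_n*b\to a*b$ in $L^p(\Omega)$. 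The functional is therefore weakly continuous, and since the closed unit ball is weakly sequentially compact, the supremum is attained. If $S=0$ every $T$ is a maximiser. If $S\neq0$, the maximiser has norm one: otherwise we could rescale it and increase the value, because the value is positive by Lemma~\ref{lem:nonzero-window-nonvanishing}-type nonvanishing. Alternatively, one can argue directly with the homogeneous quotient.

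\textbf{Second part, upper bound.} We have $|T\star\widecheck T(z)|=|\operatorname{tr}(T\alpha_z(T))|\le \|T\|_{\S^2}\|\alpha_z(T)\|_{\S^2}=\|T\|_{\S^2}^2$, where the last step uses that $\alpha_z$ is isometric on $\S^2$; the same bound follows from Cauchy--Schwarz on the symbol side. Integrating over $\Omega$ gives the bound $|\Omega|^{1/p}$.

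\textbf{Second part, sharpness.} I take $a$ to be an $L^2$-normalised indicator of a huge ball $B_R$, or a slowly varying Gaussian $a_R(z)=R^{-d}e^{-\pi|z|^2/R^2}$ normalised, made even so that reflection does not matter. Its autocorrelation is $\langle a,a(\cdot-z)\rangle$, possibly up to a complex conjugation in the convention, so its modulus equals the autocorrelation of $|a|$. This tends to $1$ uniformly on the bounded set $\Omega\cap B_M$ as $R\to\infty$. The part of $\Omega$ outside $B_M$ is handled by noting that the integrand is bounded by $1$ and $|\Omega\setminus B_M|\to0$ as $M\to\infty$. Hence the ratio tends to $|\Omega|^{1/p}$.

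\textbf{Second part, non-attainment.} This is the main obstacle. Suppose $\|T\|_{\S^2}=1$ attains the value. Then $|\operatorname{tr}(T\alpha_z(T))|=1$ for a.e.\ $z\in\Omega$, that is, equality holds in Cauchy--Schwarz. So $\alpha_z(T)^*$ (or the corresponding shift of the symbol, $a(\cdot-z)$) is a unimodular multiple of $T$ (resp.\ $a$) for a.e.\ $z\in\Omega$. This forces $|a(\cdot-z)|=|a|$ a.e.\ for $z$ in a set of positive measure. The set of such periods is a closed subgroup, since translation is continuous in $L^2$. A closed subgroup containing a set of positive measure contains a neighbourhood of $0$ by Steinhaus, hence is all of $\rdd$. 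Then $|a|$ is constant, and being in $L^2(\rdd)$ it must vanish, contradicting $\|T\|_{\S^2}=1$. Some care is needed to pin down the exact form of the Cauchy--Schwarz equality given the reflection and conjugation in the formula, but taking moduli removes the phases. Continuity of $z\mapsto |T\star\widecheck T(z)|$ lets us work with the closed set where the modulus equals $1$.
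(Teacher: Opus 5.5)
Your proposal is correct and follows the paper's route. Both arguments reduce to Weyl symbols via the Luef--Skrettingland identity $T\star\widecheck S=a_T*\widecheck{a_S}$ and Pool's theorem. Your fixed-window part is then exactly the paper's argument: pointwise convergence under weak limits, the uniform $L^\infty$ bound, dominated convergence, and the direct method. The only difference is in the total-correlation part: the paper simply invokes the autocorrelation result of \cite[Proposition 1.3]{Nicola-Romero-Trapasso-2022}, whereas you prove it directly, via the Cauchy--Schwarz bound, wide Gaussians for sharpness, and periodicity of $|a_T|$ plus Steinhaus for non-attainment. This is a sound, self-contained substitute.
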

We will also optimize over the important class of \textit{density operators} (i.e., positive operators of trace $1$), where the optimal value collapses exactly to the original Cohen-class one. To be more precise, this means that the extremal information of the generalized Husimi representation over density operators is already contained in the rank-one Cohen-class problem: 
\begin{prop}\label{DensityOptimization}
Let \(p\in[1,\infty)\), \(\Omega\subset \R^{2d}\) be measurable with \(0<|\Omega|<\infty\), and let \(S\in \mathcal{B}(L^2(\rd))\). Then
\begin{align*}
    \sup_{T\in \D}\left(\int_{\Omega}|T\star\widecheck{S}(z)|^p \dd{z}\right)^{1/p} =\sup_{f\in L^2(\R^d)\setminus\{0\}}\frac{\left(\int_{\Omega}|Q_Sf(z)|^p \dd{z}\right)^{1/p} }{\|f\|_{L^2}^2}=\Lambda_{p,\Omega}(S).
\end{align*}
If the right-hand supremum is attained (e.g., if \(S\) is compact), then the density-operator supremum is attained at a rank-one operator \(f\otimes f\).
\end{prop}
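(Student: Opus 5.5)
The plan is a convexity argument. The density-operator supremum is the maximum of a convex functional over a convex set whose extreme points are the rank-one projections, so it should equal the supremum over pure states.

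\textbf{Lower bound.} For any unit vector $f\in L^2(\rd)$, the rank-one operator $f\otimes f$ lies in $\D$. Moreover, $(f\otimes f)\star\widecheck{S}(z)=\langle \alpha_z(S)f,f\rangle = Q_Sf(z)$. This gives the inequality $\ge$ immediately.

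\textbf{Upper bound.} Take $T\in\D$ and write its spectral decomposition $T=\sum_j \lambda_j\, e_j\otimes e_j$. Here the $e_j$ are orthonormal, $\lambda_j\ge 0$ and $\sum_j\lambda_j=1$; the series converges in trace norm. Since $\alpha_z(S)$ is bounded, the map $A\mapsto \tr(A\alpha_z(S))$ is continuous on $\S^1$. It follows that, pointwise in $z$,
\[
T\star\widecheck S(z)=\tr\bigl(T\alpha_z(S)\bigr)=\sum_j\lambda_j\langle\alpha_z(S)e_j,e_j\rangle=\sum_j\lambda_j\,Q_Se_j(z).
\]
Each $Q_Se_j$ is bounded by $\norm{S}_{\cB}$, and $\sum_j\lambda_j=1$, so this series converges in $L^p(\Omega)$ as well, since $|\Omega|<\infty$. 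Minkowski's inequality then gives
\[
\norm{T\star\widecheck S}_{L^p(\Omega)}\le\sum_j\lambda_j\norm{Q_Se_j}_{L^p(\Omega)}\le\sum_j\lambda_j\,\Lambda_{p,\Omega}(S)=\Lambda_{p,\Omega}(S).
\]
This proves the equality of the two suprema. The second equality in the statement is just the definition \eqref{Problem}, by homogeneity.

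\textbf{Attainment.} Suppose $f$ is a normalized maximizer for \eqref{Problem}; for compact $S$ one exists by Theorem~\ref{CompactExistence}. Then $f\otimes f\in\D$ achieves the value $\Lambda_{p,\Omega}(S)$, which is the density-operator supremum by the above.

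\textbf{Main obstacle.} The only delicate point is justifying the pointwise expansion and the use of Minkowski for a possibly infinite series. I would handle it with trace-norm convergence of the spectral decomposition together with the uniform bound $|Q_Se_j(z)|\le\norm{S}_{\cB}$. Alternatively, one can argue with truncated sums and pass to the limit via dominated convergence on $\Omega$, which is legitimate because $|\Omega|<\infty$ and $p<\infty$.
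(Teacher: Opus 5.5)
Your proposal is correct and follows essentially the same route as the paper. You use the spectral decomposition of $T\in\D$ into rank-one pieces and the pointwise identity $T\star\widecheck S=\sum_j\lambda_j Q_Se_j$, then apply Minkowski, justified by the uniform bound $\|S\|_{\cB}$ and $|\Omega|<\infty$, to get the upper bound; testing on $f\otimes f$ gives both the lower bound and attainment. The convexity and extreme-point framing in your opening sentence is only heuristic in your argument, just as the paper mentions the Bauer maximum principle route only in a remark.
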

In the same spirit, we also study concentration problems for more general polarized Cohen class on double phase space, in the context of quantum time-frequency analysis \cite{QTFA}. The concentration problem is then reduced to the usual STFT concentration problem for Weyl symbols, giving existence of optimizers by transferring known results. 

Moreover, in Appendix \ref{app-eucl} we briefly investigate the robustness of our results beyond the Heisenberg representation, in the spirit of coorbit theory and non-Euclidean QHA \cite{BBLS22,fulsche}. It turns out that, for a fixed analyzing vector in a strongly continuous unitary representation of a locally compact group, concentration has optimizers by the same compactness mechanism as the fixed-window Husimi problem. On the other hand, the concentration problem for the diagonal voice transform $f\mapsto\langle f,\rho(\cdot)f\rangle$ (and also for the corresponding operator autocorrelation problem) has an inherent representation-theoretic nature. We explicitly investigate the concentration problem for the affine wavelet representation, for which we obtain a negative answer in terms of existence of concentration optimizers --- in contrast with the fixed-window hyperbolic Faber--Krahn theory in \cite{RamosTilli}. The shearlet transform is covered as well. 

The paper is organized as follows. Section~\ref{sec:Prelim} recalls the required material from time-frequency analysis and QHA. In Section~\ref{Sec:direct} we prove the positive existence results, including compact windows, positive perturbations of the identity, locally compact averages and the $L^\infty$ numerical-radius characterization. Section~\ref{sec:Negative} is concerned with nonattainment examples. The classification based on the essential concentration is developed in Section~\ref{sec-structure} along with the compactness barrier and noncompact weak-continuity mechanisms. Finally, Section~\ref{sec:operators} treats concentration problems for operator phase-space representations. The note includes two appendices: in Appendix~\ref{app-wig} we record strict-gap criteria for the Wigner window, while Appendix~\ref{app-eucl} explores concentration problems for coorbit transforms and diagonal affine-wavelet correlations.  

\section{Preliminaries}\label{sec:Prelim}
\subsection{Notation}
The identity operator on \(L^2(\R^d)\) is denoted by \(\mathrm{Id}\). The operator norm is denoted by \(\|\cdot\|_{\cB}\). The Schwartz functions on \(\R^{d}\) are denoted by \(\mathscr{S}(\R^{d})\), and their dual space of temperate distributions by \(\mathscr{S}'(\R^d)\). If \(f_n\) converges weakly to \(f\), we write \(f_n\rightharpoonup f\).

The spaces of (linear) bounded and compact operators on a complex Hilbert space $\cH$ are denoted by $\cB(\cH)$ and $\cK(\cH)$ respectively. In the case where $\cH=L^2(\rd)$ we usually omit the dependence on $\cH$ and write just $\cB$ and $\cK$. 

The numerical radius of an operator $S \in \cB$ is defined by 
\[ w(S) \coloneqq \sup_{\|g\|_{L^2}=1}|\langle Sg,g\rangle|. \]

\subsection{Time--frequency distributions}
We say that a quadratic time-frequency representation \(Q \colon L^2(\R^d)\to \Schw'(\rdd)\) is \textit{covariant} with respect to time-frequency shifts if \[Q(\pi(z)f)=T_zQ(f), \qquad \forall z\in \R^{2d},\] where we set \(\pi(z)f(y)=\pi(x,\xi)f(y)=e^{2\pi i \xi\cdot y}f(y-x)\) for \(z\in \R^{2d}\) and \(f\in L^2(\R^{d})\), while \(T_z\) denotes the translation by \(z\) on $\rdd$.  

We already defined the Cohen class in the Introduction. Among the members of this family, the spectrogram deserves a special mention. Given a fixed window function \(g\in L^2(\R^d)\) the short-time Fourier transform (STFT) of \(f\) with respect to \(g\) is \begin{align*}
    V_gf(z)=\int_{\R^d}e^{-2\pi i \xi\cdot y}f(y)\overline{g(y-x)} \dd{y}=\langle f,\pi(z)g\rangle.
\end{align*}
The spectrogram is the squared modulus of the STFT: \(|V_gf(z)|^2\). Moreover, using Moyal's identity for the Wigner distribution~\cite[Proposition 4.3.2]{Grochenig}, it is recognized as a Cohen's class distribution: \(|V_gf|^2=Wf*Wg\). Compared to the Wigner distribution, the spectrogram's advantage is that it is always positive. On the other hand, the spectrogram does not satisfy the marginal properties, and is also highly dependent on the choice of window function \(g\). Other Cohen's class distributions have their own advantages and disadvantages, we direct the reader to \cite[Chapter 4]{HlawatschAuger} for an overview. 

While not being central to our problems, we will frequently allude to the \(\tau\)-Wigner and Born--Jordan distributions as illustrating examples. They are Cohen's class distributions closely related to the Wigner distribution. Given \(f\in L^2(\R^d)\) and \(\tau\in [0,1]\) we define\begin{align*}
    W_{\tau}f(z)=\int_{\R^d}e^{-2\pi i \xi \cdot y} f\left(x+\tau y\right)\overline{f\left(x-(1-\tau)y\right)} \dd{y},\quad \BJ f(z)=\int_0^1W_{\tau}f(z) \dd{\tau}.
\end{align*}
The concentration problem~\eqref{Problem} for \(W_\tau\) and \(\BJ\) was studied in~\cite{Stra-Svela-Trapasso-2025,Stra-Svela-Trapasso-2026}.

Let us also briefly mention the ambiguity transform. It is defined by\begin{align*}
    Af(z)=\int_{\R^d} e^{-2\pi i \xi \cdot y}f\left(y+\frac{x}{2}\right)\overline{f\left(y-\frac{x}{2}\right)} \dd{y}=e^{\pi i x\cdot \xi} V_ff(z)
\end{align*}
for \(f\in L^2(\R^d)\). It is related to the Wigner distribution via the symplectic Fourier transform, (formally) defined by \[\mathcal{F}_\sigma F(z) \coloneqq \int_{\R^{2d}}F(u)e^{-2\pi i [u,z]} \dd{u}, \qquad F \in \Schw'(\rdd),\] where \([u,z]\) denotes the symplectic inner product of \(z\) and \(u\), that is 
\[ [u,z] \coloneqq x_u\cdot\xi_z-\xi_u\cdot x_z=u\cdot Jz, \qquad J\coloneqq \begin{pmatrix}
    O_d & I_d \\
    -I_d & O_d
\end{pmatrix}\in \R^{2d\times 2d}. \] Indeed, it is then easy to prove the following relation:
\begin{align*}
    \mathcal{F}_{\sigma}Wf(z)=Af(z),
\end{align*}
which also shows that the ambiguity function fails to be covariant, and thus to belong to the Cohen class. An important property of the ambiguity function is the so-called \textit{radar correlation estimate}~\cite[Lemma 4.2.1]{Grochenig}: For any \(f\in L^2(\R^d)\setminus \{0\}\) and \(z\in \R^{2d}\) we have \begin{align}\label{radar}
    |Af(z)|\leq |Af(0)|=\|f\|_{L^2}^2
\end{align}
with equality if and only if \(z=0\).  This will be of use in the study of Cohen's class.

\subsection{Quantum harmonic analysis}

Quantum harmonic analysis, introduced by Werner in ~\cite{Werner}, is an extension of harmonic analysis to spaces of operators. The standard convolution of functions is accompanied by a convolution between operators, and one between functions and operators. Several objects from time-frequency analysis arise naturally as convolutions or Fourier transforms in QHA, see for instance~\cite{CohenClass}. In this section, we will recall the results from QHA which we rely on in the rest of the text.

The starting point of quantum harmonic analysis is the following two actions on operators: The \textit{operator shift}, which for a \(z\in\R^{2d}\) and \(A\in \cB\) is defined by\begin{align*}
    \alpha_z(A)=\pi(z)A\pi(z)^*,
\end{align*}
and the \textit{operator reflection}, defined by \begin{align*}
    \widecheck{A}=PAP,
\end{align*}
where \(P\) is the parity operator, defined by \(Pf(t)=f(-t)\).  With these definitions in hand, Werner defined the convolution \(T\star S\) between two operators as the function
\begin{align}\label{conv}
    T\star S(z) \coloneqq \tr\big(T\alpha_z(\widecheck{S})\big).
\end{align}
Recall here that the trace of an operator \(T\) on \(L^2(\R^d)\) is given by the series \(\sum_{n=0}^\infty \langle T e_n,e_n\rangle\), where \(\{e_n\}_{n=0}^\infty\) is any orthonormal basis of \(L^2(\R^d)\). While formally sound, operator convolution is initially only well-defined for operators in the trace class, that is, the operators \begin{align*}
    \S^1=\{S\in \mathcal{K} : \tr(|S|)<\infty\}.
\end{align*}
In this case the right-hand side of \eqref{conv} is independent of the choice of orthonormal basis, and thus well-defined. The same formula is also meaningful whenever one factor is trace class and the other is bounded, since \(T\alpha_z(\widecheck S)\in\S^1\) for \(T\in\S^1\) and \(S\in\cB\). In fact, one can even verify that for \(S,T\in \S^1\), \(T\star S\in L^1(\R^{2d})\). 

One of the most important class of elements in $\S^1$ are finite rank operators, in particular \textit{rank-one operators}: Given $f,g \in L^2(\rd)$, $f\otimes g \in \S^1$ is defined by 
\[  (f\otimes g) h=\langle h,g\rangle f, \quad h \in L^2(\rd). \] 
Moreover, $\S^1$ is just a special case of the Schatten classes 
\begin{align*}
    \S^p=\{S\in \mathcal{K} : \tr(|S|^p)<\infty\}, \qquad 1\le p <\infty,
\end{align*}
which are Banach spaces of operators with the norm \(\|S\|_{\S^p}=\left(\tr(|S|^p)\right)^{1/p}\). Alternatively, we may characterize the Schatten classes in terms of singular values: A compact operator \(S\) belongs to \(\S^p\) if and only if the singular values \(\sigma(n)\) of \(S\) belong to \(\ell^p\), and the Schatten norm equals the \(\ell^p\) norm of the sequence of singular values. In QHA, the Schatten classes are the operator analogues of the \(L^p\)-spaces, and they obey a similar duality relation, namely \((S^p)^*=S^{q},\) where \(\frac{1}{p}+\frac{1}{q}=1\) for \(p\in (1,\infty)\). \(\S^2\) is therefore a Hilbert space, namely the space of Hilbert--Schmidt operators. For the endpoint cases we have \((\S^1)^*=\cB\) and \(\mathcal{K}^*=\S^1\), thus leading us to conveniently set \(\S^\infty=\cB\). 
The duality is concretely given by \begin{align*}
    \langle T,S \rangle=\tr(TS^*).
\end{align*}
Along the lines of the same heuristics, we note that operator convolution extends to Schatten classes similarly to how regular convolution extends to the \(L^p\)-spaces, that is via the following analogue of Young's inequality~\cite{Werner}: 
\begin{align*}
    \|T\star S\|_{L^r}\leq \|T\|_{\S^p}\|S\|_{\S^q}, \qquad \frac{1}{p}+\frac{1}{q}=1+\frac{1}{r}. 
\end{align*}
Let us emphasize that operator convolution is commutative and the convolution of two positive operators results in a positive function.
There is also an adjoint notion of function-operator convolution: For \(F\in L^1(\rdd)\) and \(S\in\cB\), we set
\[ F\star S\coloneqq \int_{\rdd} F(z)\alpha_z(S) \dd{z}, \]
to be interpreted as a weak operator-valued integral that is,
\[ \langle (F\star S)u,v\rangle = \int_{\rdd}F(z)\langle \alpha_z(S)u,v\rangle \dd{z}, \qquad u,v \in L^2(\rd). \]
As before, we get refined estimates via Werner's second Young inequality:
\begin{align*}
    \|F\star S\|_{\S^r}\leq \|F\|_{L^p}\|S\|_{\S^q}, \qquad \frac{1}{p}+\frac{1}{q}=1+\frac{1}{r}.
\end{align*}
In particular, we have $\|F\star S\|_{\cB}\le \|F\|_{L^1}\|S\|_{\cB}$. 

Convolution of an operator with a characteristic function of a set $\Omega \subset \rdd$ will play a distinguished role. These are the so-called \emph{mixed-state localization operators}~\cite{CohenClass}: 
\[ H_{\Omega,S}\coloneqq \chi_\Omega\star S = \int_\Omega \alpha_z(S) \dd{z}. \]

QHA also provides a notion of Fourier transform for operators, the \textit{Fourier-Wigner transform}. For \(S\in \S^1\), the Fourier-Wigner transform is the bounded function defined by \begin{align*}
    \mathcal{F}_W(S)(z) \coloneqq e^{-\pi i x \cdot \xi} \tr(\pi(-z)S), \qquad z\in \R^{2d}.
\end{align*}
The Fourier-Wigner transform shares indeed many of its properties with the classical Fourier transform, for instance it decouples Werner's convolutions:\begin{align*}
    \mathcal{F}_{\sigma}(T\star S)=\mathcal{F}_W(T)\mathcal{F}_W(S),\qquad \mathcal{F}_{W}(F\star S)=\mathcal{F}_\sigma(F)\mathcal{F}_W(S).
\end{align*}
It also allows us to illustrate the connection between QHA and time-frequency analysis, as the following identity shows.\begin{lemma}[\cite{Bible1}, Lemma 6.1]
    For every \(f\in L^2(\R^d)\), the Fourier-Wigner transform of \(f\otimes f\) is the ambiguity function:\begin{align*}
        \mathcal{F}_W(f\otimes f)(z)=Af(z), \qquad z \in \rdd.
    \end{align*}
\end{lemma}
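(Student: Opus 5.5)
The plan is to compute $\mathcal{F}_W(f\otimes f)$ straight from the definition, assuming first that $f\in\Schw(\rd)$. Then $f\otimes f$ is trace class and the trace of a rank-one operator is $\tr(A(f\otimes f))=\langle Af,f\rangle$ for $A\in\cB$. Taking $A=\pi(-z)$ gives
\[
\mathcal{F}_W(f\otimes f)(z)=e^{-\pi i x\cdot\xi}\,\tr\bigl(\pi(-z)(f\otimes f)\bigr)=e^{-\pi i x\cdot\xi}\langle \pi(-z)f,f\rangle .
\]
I will identify the right-hand side with $Af(z)=e^{\pi i x\cdot\xi}V_ff(z)=e^{\pi i x\cdot \xi}\langle f,\pi(z)f\rangle$, using the relation between the ambiguity function and the STFT already recorded in the preliminaries.

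The key step is the adjoint and commutation relation for time-frequency shifts. Writing $\pi(z)=M_\xi T_x$, the commutation rule $T_xM_\xi=e^{-2\pi i x\cdot\xi}M_\xi T_x$ gives
\[
\pi(z)^*=T_{-x}M_{-\xi}=e^{2\pi i x\cdot\xi}M_{-\xi}T_{-x}=e^{2\pi i x\cdot \xi}\pi(-z).
\]
Hence $\pi(-z)=e^{-2\pi i x\cdot\xi}\pi(z)^*$, and therefore
\[
\langle \pi(-z)f,f\rangle=e^{-2\pi i x\cdot\xi}\langle f,\pi(z)f\rangle .
\]
Substituting this into the display above produces the phase $e^{-3\pi i x\cdot\xi}$ rather than $e^{\pi i x\cdot\xi}$. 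So the sign conventions need care: the definition of $\mathcal{F}_W$ is written with $\pi(-z)$, and the intended normalization is presumably the one in \cite{Bible1}.

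To avoid phase bookkeeping errors, I will instead verify the identity directly from the integrals. I will write
\[
\tr\bigl(\pi(-z)(f\otimes f)\bigr)=\int_{\rd} e^{-2\pi i\xi\cdot y}f(y+x)\overline{f(y)}\dd{y},
\]
then make the substitution $y\mapsto y-x/2$. This turns the integrand into $f(y+x/2)\overline{f(y-x/2)}$ times a phase $e^{\pi i x\cdot\xi}$. Multiplying by the prefactor $e^{-\pi i x\cdot\xi}$ then gives exactly the symmetric integral defining $Af(z)$.

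Finally, I will extend the identity from $\Schw(\rd)$ to all of $L^2(\rd)$ by density. Both sides are continuous in $f\in L^2$ uniformly in $z$: on the left, $\|f\otimes f-g\otimes g\|_{\S^1}\le(\|f\|_{L^2}+\|g\|_{L^2})\|f-g\|_{L^2}$, and $|\tr(\pi(-z)T)|\le\|T\|_{\S^1}$; on the right, the same bound follows from Cauchy--Schwarz.

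The only real obstacle is the phase conventions. The direct integral computation, rather than the operator identities, is the reliable route, and I will do the change of variables carefully.
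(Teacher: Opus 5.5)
The paper gives no proof of this lemma; it only cites \cite{Bible1}. Your final argument, the direct integral computation, is a correct and complete verification: $\tr\bigl(\pi(-z)(f\otimes f)\bigr)=\langle \pi(-z)f,f\rangle=\int_{\rd}e^{-2\pi i\xi\cdot y}f(y+x)\overline{f(y)}\dd{y}$, and the translation $y\mapsto y-x/2$ produces exactly the factor $e^{\pi i x\cdot\xi}$ that the prefactor of $\mathcal{F}_W$ cancels.

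You should, however, correct the abandoned operator computation, because the discrepancy there is an arithmetic slip and not a convention mismatch. Applying $T_aM_b=e^{-2\pi i a\cdot b}M_bT_a$ with $a=-x$, $b=-\xi$ gives the phase $e^{-2\pi i(-x)\cdot(-\xi)}=e^{-2\pi i x\cdot\xi}$. So $\pi(z)^*=T_{-x}M_{-\xi}=e^{-2\pi i x\cdot\xi}\pi(-z)$, i.e.\ $\pi(-z)=e^{2\pi i x\cdot\xi}\pi(z)^*$. Hence $\langle\pi(-z)f,f\rangle=e^{2\pi i x\cdot\xi}V_ff(z)$ and $\mathcal{F}_W(f\otimes f)(z)=e^{\pi i x\cdot\xi}V_ff(z)=Af(z)$, in agreement with your integral computation. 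The paper's conventions are therefore consistent, and your suggestion that the normalization of the reference is needed should be dropped. When two computations of the same quantity disagree, that signals an error in one of them, not a normalization issue.

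Finally, the density step is unnecessary. For every $f\in L^2(\rd)$ one already has $f\otimes f\in\S^1$ with $\tr\bigl(\pi(-z)(f\otimes f)\bigr)=\langle\pi(-z)f,f\rangle$. Moreover, $y\mapsto f(y+x)\overline{f(y)}$ is integrable by Cauchy--Schwarz, so the change of variables is valid directly for all $f\in L^2(\rd)$.
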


Since QHA studies the interactions between functions on \(\R^{2d}\) and operators on \(L^2(\R^d)\), it is closely related to the concept of quantization, and in particular the Weyl calculus. Given a temperate distribution \(F\in \mathscr{S}'(\R^{2d})\), its Weyl transform \(L_F \colon \sS(\rd) \to \sS'(\rd)\) is defined weakly by
\begin{align*}
    \langle L_F\phi,\psi\rangle=\langle F, W(\psi,\phi)\rangle,
\end{align*} for \(\phi,\psi\in \mathscr{S}(\R^{d})\). Conversely, by the Schwartz kernel theorem it follows that every continuous linear operator $S \colon \sS(\rd) \to \sS'(\rd)$ can be viewed as the Weyl quantization of a suitable symbol \(a_S \in \Schw'(\rdd)\), that is $S=L_{a_S}$.  

We denote by \(\mathfrak{S}\) all operators on \(L^2(\R^d)\) with Weyl symbol in \(\mathscr{S}(\R^{2d})\) and the operators with symbol in \(\mathscr{S}'(\R^{2d})\) by \(\mathfrak{S}'\). By means of a duality argument it is possible to extend operator convolution \(T\star S\) and the Fourier-Wigner transform \(\mathcal{F}_W(S)\) to the cases \(S\in \mathfrak{S}'\) and \(T\in \mathfrak{S}\), the resulting objects being temperate distributions on \(\R^{2d}\) --- see~\cite{SchwartzOps} for more details. We also recall the following result from \cite{SchwartzOps}, which is the QHA analogue of the Fourier inversion theorem. 
\begin{prop}
    Given \(S\in \mathfrak{S}'\), let \(a_S\in \mathscr{S}'(\R^{2d})\) denote its Weyl symbol. Then $        \mathcal{F}_{\sigma}\mathcal{F}_W(S)=a_S$, or equivalently $\mathcal{F}_W^{-1}=L\circ\mathcal{F}_\sigma$. 
\end{prop}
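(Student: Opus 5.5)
The plan is to prove the identity first for operators with Schwartz Weyl symbol, where everything is an absolutely convergent integral. I will then pass to \(\mathfrak{S}'\) by duality, using that \(\mathfrak{S}\) is weak-* dense in \(\mathfrak{S}'\) and that all maps involved are weak-* continuous.

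\emph{Step 1: the symbol of a time-frequency shift.} Set \(\rho(z)=e^{-\pi i x\cdot\xi}\pi(z)\), the symmetric time-frequency shift. From the defining relation \(\langle L_F\phi,\psi\rangle=\langle F,W(\psi,\phi)\rangle\) and the covariance of \(W(\psi,\phi)\) under \(\pi(z)\), I will compute the Weyl symbol of \(\rho(z)\). Equivalently, I will show that \(\langle \rho(z)\phi,\psi\rangle\) equals the pairing of \(W(\psi,\phi)\) with a character. The outcome should be that the Weyl symbol of \(\rho(z)\) is the pure character \(w\mapsto e^{2\pi i[z,w]}\), up to the sign convention fixed by \([\cdot,\cdot]\). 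This is a direct Gaussian-free computation with the integral formula for \(W\). I will keep the signs explicit, because the phase \(e^{-\pi i x\cdot\xi}\) in \(\mathcal F_W\) is precisely what makes \(e^{-\pi i x\cdot \xi}\pi(-z)=\rho(-z)\).

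\emph{Step 2: the Schwartz case.} For \(S=L_a\) with \(a\in\mathscr S(\R^{2d}\)), the operator \(S\) is trace class. I will use the trace formula for Weyl operators,
\[
\tr(L_aL_b)=\int_{\R^{2d}}a(w)b(w)\dd w,
\]
valid when \(a\in\mathscr S\) and \(b\in\mathscr S'\) with \(L_b\) bounded. It follows from Moyal's identity, or by writing \(L_b\) via its kernel and checking on rank-one operators, whose Weyl symbols are cross-Wigner distributions. Applying it with \(L_b=\rho(-z)\) gives
\[
\mathcal F_W(S)(z)=\tr(\rho(-z)S)=\int a(w)e^{-2\pi i[z,w]}\dd w.
\]
Since \([z,w]=-[w,z]\), this is \(\mathcal F_\sigma a(z)\) up to the reflection \(z\mapsto -z\), which the sign convention of Step 1 should absorb. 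The symplectic Fourier transform is an involution on \(\mathscr S(\R^{2d})\): a linear change of variables by \(J\) reduces it to the ordinary Fourier inversion theorem. Hence \(\mathcal F_\sigma\mathcal F_W(S)=\mathcal F_\sigma^2a=a\).

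\emph{Step 3: extension to \(\mathfrak S'\).} The distributional Fourier–Wigner transform on \(\mathfrak S'\) is defined by duality, by transferring \(\mathcal F_W\) to its adjoint acting on \(\mathfrak S\). By Step 2, on \(\mathfrak S\) this map is \(\mathcal F_\sigma\) composed with the symbol map \(S\mapsto a_S\). Since \(\mathcal F_\sigma\) is symmetric, that is \(\langle\mathcal F_\sigma F,G\rangle=\langle F,\mathcal F_\sigma G\rangle\), the dual definition gives \(\mathcal F_W(S)=\mathcal F_\sigma a_S\) for every \(S\in\mathfrak S'\). Applying \(\mathcal F_\sigma\) on \(\mathscr S'\) once more yields \(\mathcal F_\sigma\mathcal F_W(S)=a_S\), and hence \(\mathcal F_W^{-1}=L\circ\mathcal F_\sigma\).

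The main obstacle is bookkeeping rather than analysis. One must check that the duality extension of \(\mathcal F_W\) used in \cite{SchwartzOps} matches the pairing under which the Schwartz formula was derived, i.e.\ bilinear versus sesquilinear pairing. One must also check that the signs in \(\rho(-z)\), in \([\cdot,\cdot]\), and in the definition of \(\mathcal F_\sigma\) combine to give exactly \(\mathcal F_\sigma a\) rather than its reflection. I would settle both by testing the final identity on a Gaussian rank-one operator \(\varphi_0\otimes\varphi_0\). There both sides are explicit: the symbol is \(W\varphi_0\) and the Fourier–Wigner transform is \(A\varphi_0\), by the lemma from \cite{Bible1} above.
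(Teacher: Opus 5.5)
The paper gives no proof of this statement; it is quoted from \cite{SchwartzOps}, so there is no internal argument to compare against. Your outline is the standard route and is sound: prove $\mathcal{F}_W(S)=\mathcal{F}_\sigma a_S$ for Schwartz symbols by pairing with the symbol of $\rho(-z)$, then extend to $\mathfrak{S}'$ because both sides are weak-* continuous and $\mathfrak{S}$ is weak-* dense. The weak spot is the sign you leave open. Your anticipated answer has the wrong sign, and the check you propose cannot detect it.

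With the paper's conventions the sign is forced, not a matter of convention. Read $\langle L_F\phi,\psi\rangle$ as $\int F\,\overline{W(\psi,\phi)}=\int F\,W(\phi,\psi)$ and take $F(w)=e^{2\pi i[w,z_0]}=e^{2\pi i(x\cdot\xi_0-\xi\cdot x_0)}$. The $\xi$-integration forces $t=-x_0$ in the Wigner integral, leaving $\int e^{2\pi i\xi_0\cdot x}\phi(x-x_0/2)\overline{\psi(x+x_0/2)}\dd{x}=\langle\rho(z_0)\phi,\psi\rangle$.

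So the symbol of $\rho(z_0)$ is $e^{2\pi i[w,z_0]}=e^{-2\pi i[z_0,w]}$, the conjugate of what you expected. Your trace formula then gives $\mathcal{F}_W(L_a)(z)=\int a(w)e^{-2\pi i[w,z]}\dd{w}=\mathcal{F}_\sigma a(z)$ exactly. A leftover reflection could not be ``absorbed'': it would change the conclusion to $\mathcal{F}_\sigma\mathcal{F}_W(S)=a_S(-\cdot)$, which is the symbol of $\widecheck S$.

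The same antisymmetry $[u,z]=-[z,u]$ matters in Step 3. $\mathcal{F}_\sigma$ is self-adjoint for the sesquilinear pairing, but its bilinear transpose is $G\mapsto(\mathcal{F}_\sigma G)(-\cdot)$. So ``$\mathcal{F}_\sigma$ is symmetric'' is only safe in the sesquilinear setting. The density argument from your first paragraph avoids transposes entirely and is the cleaner way to finish.

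Your Gaussian test cannot catch either problem, because $W\varphi_0=2^de^{-2\pi|z|^2}$ and $A\varphi_0=e^{-\pi|z|^2/2}$ are real and even. A shifted Gaussian $\pi(z_0)\varphi_0$ with $z_0\neq0$ would discriminate.

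You can in fact skip Step 1 and the $\mathscr{S}\times\mathscr{S}'$ trace formula altogether:
\begin{enumerate}
\item The paper records $\mathcal{F}_\sigma Wf=Af$ and $\mathcal{F}_W(f\otimes f)=Af$, and the Weyl symbol of $f\otimes g$ is $W(f,g)$.
\item Both $\mathcal{F}_W(f\otimes g)$ and $\mathcal{F}_\sigma W(f,g)$ are sesquilinear in $(f,g)$, so polarization gives the identity for all finite-rank operators.
\item Continuity extends it to $\mathcal{S}^1\supset\mathfrak{S}$: $\mathcal{F}_W\colon\mathcal{S}^1\to L^\infty$ is bounded, and $S\mapsto a_S$ is isometric $\mathcal{S}^2\to L^2$ by Pool's theorem.
\item Your Step 3, done via density, then completes the proof.
\end{enumerate}
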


Our motivation for introducing operator convolutions is that they provide a different approach in the study of Cohen's class distributions. As mentioned in the Introduction, the following observation from~\cite{CohenClass} is crucial in this regard.  
\begin{prop}[{\cite[Proposition 7.1]{CohenClass}}]
Consider \(\sigma\in \mathscr{S}'(\R^{2d})\) and \(f\in \mathscr{S}(\R^d)\). The Cohen class distribution \(Q_{\sigma}f\) coincides with the operator convolution \begin{align*}
    Q_{\sigma}f=(f\otimes f)\star L_{\sigma}.
\end{align*}
Conversely, every operator \(S\in \mathfrak{S}'\) defines a Cohen's class distribution via \begin{align*}
    Q_Sf=(f\otimes f)\star \widecheck{S}.
\end{align*}
\end{prop}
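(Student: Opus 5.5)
The plan is to compute the operator convolution \((f\otimes f)\star L_\sigma\) directly from the definition~\eqref{conv}, tracking the Weyl symbols of the operators involved. The Fourier--Wigner transform will serve only as a consistency check. Fix \(f\in\mathscr S(\rd)\) and \(\sigma\in\mathscr S'(\rdd)\). The rank-one operator \(f\otimes f\) has Weyl symbol \(Wf\in\mathscr S(\rdd)\), so \(f\otimes f\in\mathfrak S\). Hence \((f\otimes f)\star L_\sigma\) is defined by the duality extension recalled above, and its value at \(z\) is
\[
(f\otimes f)\star L_\sigma(z)=\tr\bigl((f\otimes f)\,\alpha_z(\widecheck{L_\sigma})\bigr)=\langle \alpha_z(\widecheck{L_\sigma})f,f\rangle ,
\]
interpreted as a distributional pairing.

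The key steps, in order, are two symbol identities. The first is that conjugation by the parity operator reflects the symbol: \(\widecheck{L_a}=PL_aP=L_{a(-\cdot)}\). The second is that the operator shift translates it: \(\alpha_z(L_a)=L_{a(\cdot-z)}\). Both follow from the weak definition \(\langle L_a\phi,\psi\rangle=\langle a,W(\psi,\phi)\rangle\) together with the elementary covariance relations
\[
W(P\psi,P\phi)(w)=W(\psi,\phi)(-w),\qquad W(\pi(z)^*\psi,\pi(z)^*\phi)(w)=W(\psi,\phi)(w+z).
\]
Combining them, the Weyl symbol of \(\alpha_z(\widecheck{L_\sigma})\) is \(w\mapsto\sigma(z-w)\). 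Therefore
\[
\langle \alpha_z(\widecheck{L_\sigma})f,f\rangle=\langle \sigma(z-\cdot),Wf\rangle=(Wf*\sigma)(z)=Q_\sigma f(z).
\]
The convolution on the right is the standard convolution of the Schwartz function \(Wf\) with a tempered distribution. It is a smooth, polynomially bounded function of \(z\), so the identity holds pointwise and in \(\mathscr S'\).

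For the converse, take \(S\in\mathfrak S'\) with Weyl symbol \(a_S\). Put \(\sigma:=a_S(-\cdot)\), so that \(L_\sigma=\widecheck S\) by the parity rule above. The first part then gives \((f\otimes f)\star\widecheck S=Q_\sigma f\), which is a Cohen class distribution.

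As a cross-check, apply \(\mathcal F_\sigma\) to both sides. On the left, \(\mathcal F_\sigma\bigl((f\otimes f)\star L_\sigma\bigr)=\mathcal F_W(f\otimes f)\,\mathcal F_W(L_\sigma)=Af\cdot\mathcal F_\sigma\sigma\), using \(\mathcal F_W(f\otimes f)=Af\) and the inversion \(\mathcal F_W=\mathcal F_\sigma\circ L^{-1}\); the latter holds because \(\mathcal F_\sigma\) is an involution. On the right, \(\mathcal F_\sigma(Wf*\sigma)=\mathcal F_\sigma Wf\cdot\mathcal F_\sigma\sigma=Af\cdot\mathcal F_\sigma\sigma\). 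The product \(Af\cdot\mathcal F_\sigma\sigma\) is legitimate since \(Af\in\mathscr S\).

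The main obstacle is not the algebra but the justification. We must check that the trace \(\tr\bigl((f\otimes f)\alpha_z(\widecheck{L_\sigma})\bigr)\) for \(L_\sigma\in\mathfrak S'\), defined via the duality of~\cite{SchwartzOps}, really coincides with the pairing \(\langle a,Wf\rangle\) of symbols. I would settle this by first proving the identity for \(\sigma\in\mathscr S(\rdd)\), where every object is trace class and absolutely convergent. I would then extend to \(\sigma\in\mathscr S'(\rdd)\) by density of \(\mathscr S\) in \(\mathscr S'\) in the weak-* topology, using that both sides are weak-* continuous in \(\sigma\).
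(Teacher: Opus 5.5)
Your argument is correct. It is essentially the standard proof behind this result; the paper gives no proof of its own and only cites \cite{CohenClass}. That proof identifies \(Wf\) as the Weyl symbol of \(f\otimes f\) and uses that operator convolution is convolution of Weyl symbols, \(T\star S=a_T*a_S\), the fact the paper later quotes from \cite{LS20}. You verify this identity directly in the case at hand, using the parity and translation covariance of Weyl symbols rather than going through \(\mathcal{F}_W\). Your weak-* density passage from \(\mathscr{S}\) to \(\mathscr{S}'\) adequately reconciles the pointwise trace formula with the duality definition of \cite{SchwartzOps}.
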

Consequently, we can view Cohen's class distributions as special cases of operator convolutions. For the sake of completeness, let us express some popular Cohen's class distributions as operator convolutions.
\begin{lemma}\label{Calculations}
    The (generalized) spectrograms, the (\(\tau\))-Wigner distributions, and the Born-Jordan distribution take the following form when written as operator convolutions:
    \begin{itemize}
        \item \(|V_gf|^2=(f\otimes f)\star (\widecheck{g}\otimes\widecheck{g})\).
        \item \(V_gf\overline{V_hf}=(f\otimes f)\star (\widecheck{h}\otimes\widecheck{g})\).
        \item \(Wf=(f\otimes f)\star 2^dP\).
        \item For $0<\tau<1$, \(W_{\tau}f=(f\otimes f)\star \widecheck{S_{\tau}},\) where \(S_{\tau}f(y)=\frac{1}{(1-\tau)^d}f\left(\frac{\tau}{\tau-1}\cdot y\right)\).
        \item \(\BJ f=(f\otimes f)\star \widecheck{S}_{\mathrm{BJ}}, \) where \(S_{\mathrm{BJ}}=\mathcal{F}_W^{-1}(\mathrm{sinc}(\pi x\cdot \xi))\).
    \end{itemize}
\end{lemma}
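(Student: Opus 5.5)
The plan is to reduce every item to the elementary identity
\[
(f\otimes f)\star \widecheck{S}(z)=\tr\bigl((f\otimes f)\alpha_z(S)\bigr)=\langle \alpha_z(S)f,f\rangle=\langle S\pi(z)^*f,\pi(z)^*f\rangle,
\]
which follows from \eqref{conv} and $\widecheck{\widecheck{S}}=S$. We then compute the right-hand side for each candidate window $S$. We work first with $f\in\sS(\rd)$ and extend to $f\in L^2(\rd)$ by continuity. Both sides are continuous in $f$: pointwise, and in the appropriate norms, by the trace class/bounded duality and the known continuity of the distributions involved.

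The spectrogram items come essentially for free. Since $P(\widecheck h\otimes\widecheck g)P=(P\widecheck h)\otimes(P\widecheck g)=h\otimes g$, the window $\widecheck{h}\otimes\widecheck{g}$ corresponds to $S=h\otimes g$. Moreover $\alpha_z(h\otimes g)=\pi(z)h\otimes\pi(z)g$, so
\[
\langle \alpha_z(h\otimes g)f,f\rangle=\langle f,\pi(z)g\rangle\,\overline{\langle f,\pi(z)h\rangle}=V_gf(z)\overline{V_hf(z)}.
\]
Setting $h=g$ gives the first item.

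For the Wigner item, $P$ is invariant under reflection, so we need $2^d\langle \pi(z)P\pi(z)^*f,f\rangle=Wf(z)$. This is the Grossmann--Royer formula. We verify it directly by writing
\[
\pi(z)P\pi(z)^*f(y)=e^{4\pi i\xi\cdot(y-x)}f(2x-y)
\]
and substituting $y=x+t/2$ in the inner product, which produces the factor $2^{-d}$ and exactly the Wigner integrand. The $\tau$-Wigner item is handled the same way. We compute $\pi(z)S_\tau\pi(z)^*f(y)$ explicitly as a dilation about $x$ composed with a modulation. We then change variables in $\langle\alpha_z(S_\tau)f,f\rangle$ so that the two arguments of $f$ become $x+\tau t$ and $x-(1-\tau)t$. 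The Jacobian of this substitution should cancel the normalisation $(1-\tau)^{-d}$, and the phase should reduce to $e^{-2\pi i\xi\cdot t}$. The main obstacle is this bookkeeping: getting the sign conventions in $S_\tau$ right (reflection $\tau/(\tau-1)<0$) and keeping track of the phase. As a sanity check, at $\tau=1/2$ the window reduces to $2^dP$.

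For Born--Jordan we avoid direct computation. Write $\BJ f=Wf*\theta$ with $\mathcal F_\sigma\theta=\mathrm{sinc}(\pi x\cdot\xi)$. The Proposition from \cite{CohenClass} then gives $\BJ f=(f\otimes f)\star L_\theta$. By the QHA Fourier inversion, $\mathcal F_W^{-1}=L\circ\mathcal F_\sigma$, and $\mathcal F_\sigma$ is an involution, so
\[
S_{\mathrm{BJ}}=\mathcal F_W^{-1}(\mathrm{sinc})=L_{\mathcal F_\sigma\mathrm{sinc}}=L_\theta.
\]
Finally, $\theta$ is even, and conjugation by $P$ corresponds to the reflection $a\mapsto a(-\cdot)$ of Weyl symbols. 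Hence $\widecheck{S}_{\mathrm{BJ}}=L_\theta$, as claimed.

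Alternatively, integrating the $\tau$-Wigner item over $\tau\in(0,1)$ gives the same window as a weak integral $\int_0^1\widecheck{S_\tau}\dd\tau$. This is consistent with the averaged representation mentioned in the Introduction.
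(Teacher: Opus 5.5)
Your proposal is correct, but it takes a different route from the paper, whose proof is purely bibliographic: it cites \cite[Lemma 3.1, Example 7.1]{CohenClass} for the spectrogram and Wigner items and \cite[Section 6]{Luef-Skrettingland-2021} for the $\tau$-Wigner and Born--Jordan items. You instead reduce everything to the single identity $(f\otimes f)\star\widecheck S(z)=\langle S\pi(z)^*f,\pi(z)^*f\rangle$ and check each window by hand. The computations go through as you describe. For instance, for $S_\tau$ the substitution $y=x-(1-\tau)t$ in $\langle \alpha_z(S_\tau)f,f\rangle$ produces:
the arguments $x+\tau t$ and $x-(1-\tau)t$;
a Jacobian $(1-\tau)^d$ that cancels the normalization;
the phase $e^{-2\pi i\xi\cdot t}$.

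Your Born--Jordan argument combines the Cohen-class proposition, $\mathcal F_W^{-1}=L\circ\mathcal F_\sigma$, the involutivity of $\mathcal F_\sigma$ and the evenness of the kernel. It also makes explicit why the reflection is harmless here: in general $Q_\sigma$ corresponds to the window $S=\widecheck{L_\sigma}$, which coincides with $L_\sigma$ only for even $\sigma$.

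The citation route buys brevity. Yours buys self-containedness and an explicit check of the conventions (where the reflections sit, the factor $(1-\tau)^{-d}$, the consistency $S_{1/2}=2^dP$), which matters because these conventions vary across the literature.

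One caveat: your blanket extension from $\sS(\rd)$ to $L^2(\rd)$ via trace-class/bounded duality does not cover the Born--Jordan item when $d\ge 2$. There $\|S_\tau\|_{\cB}=(\tau(1-\tau))^{-d/2}$ is not integrable on $(0,1)$, and $S_{\mathrm{BJ}}$ is not a bounded operator (cf.\ the discussion of the higher-dimensional Born--Jordan window in Section~\ref{sec-structure}). In that case the identity should be read for $f\in\sS(\rd)$, as in the cited proposition. Alternatively, extend it to $f\in L^2(\rd)$ on the Fourier side, where both sides have symplectic Fourier transform $Af\cdot\mathrm{sinc}(\pi x\cdot\xi)\in L^2(\rdd)$.
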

\begin{proof}
    For the first three items, see~\cite[Lemma 3.1, Example 7.1]{CohenClass}, for the last two, see~\cite[Section 6]{Luef-Skrettingland-2021}. Note for future reference that we will sometimes denote \(2^dP\) by \(S_{\mathrm{W}}=S_{1/2}\).
\end{proof}

Since the rank-one operators are in a sense the most fundamental ones, the above result justifies thinking of the (generalized) spectrograms as the most fundamental elements of Cohen's class, at least in the QHA approach.

We conclude this section with a couple of technical results that will be repeatedly used later. 
\begin{lemma}\label{lem:nonzero-window-nonvanishing}
Consider \(S\in \cB \smo\) and a set \(\Omega\subset\rdd\) of positive measure. There exists
\(u\in L^2(\rd)\), \(\|u\|_{L^2}=1\), such that \(|Q_Su|>0\) on a subset of \(\Omega\) of positive measure. In particular, \(\|Q_Su\|_{L^p(\Omega)}>0\) for every \(1\le p\le\infty\).
\end{lemma}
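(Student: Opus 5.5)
The plan is to reduce everything to properties of the continuous function \(z\mapsto Q_Su(z)=\langle \alpha_z(S)u,u\rangle=\langle S\pi(z)^*u,\pi(z)^*u\rangle\), which is continuous in \(z\) for fixed \(u\) because \(\pi(z)^*u\) depends continuously on \(z\) in \(L^2\) (strong continuity of \(\pi\), up to a phase factor that cancels in the quadratic form). Since the function is continuous, showing \(|Q_Su|>0\) on a set of positive measure inside \(\Omega\) comes down to finding a density point \(z_0\) of \(\Omega\) with \(Q_Su(z_0)\neq 0\). Continuity then gives a neighbourhood of \(z_0\) on which \(|Q_Su|>0\), and this neighbourhood meets \(\Omega\) in positive measure.

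\textbf{Step 1.} Since \(S\neq0\), the polarization identity shows that \(S\) cannot have identically vanishing quadratic form. So there is a unit vector \(g\) with \(\langle Sg,g\rangle\neq0\).

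\textbf{Step 2.} Fix a point \(z_0\in\Omega\) at which \(\Omega\) has positive measure in every neighbourhood. Such points exist: \(\Omega\) has positive measure, so almost every point of \(\Omega\) is a Lebesgue density point.

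\textbf{Step 3.} Set \(u=\pi(z_0)g\). Then \(\pi(z_0)^*u=g\) up to a unimodular phase, so
\[
Q_Su(z_0)=\langle Sg,g\rangle\neq0 .
\]
By the continuity of \(Q_Su\), there is a ball \(B\) around \(z_0\) on which \(|Q_Su|>0\). The set \(B\cap\Omega\) has positive measure by the choice of \(z_0\). Moreover \(\|u\|_{L^2}=1\) because \(\pi(z_0)\) is unitary.

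\textbf{Step 4.} The final claim follows directly. A measurable function that is strictly positive on a set of positive measure has positive \(L^p\) norm on that set, for every \(1\le p\le\infty\).

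\textbf{Main obstacle.} The only delicate points are the continuity of \(z\mapsto Q_Su(z)\) and the phase bookkeeping in \(\pi(z)^*\pi(z_0)\). Both are routine consequences of the strong continuity of the Schrödinger representation and of the estimate
\[
|\langle Sa,a\rangle-\langle Sb,b\rangle|\le \|S\|_{\cB}\,\|a-b\|_{L^2}\,\bigl(\|a\|_{L^2}+\|b\|_{L^2}\bigr).
\]
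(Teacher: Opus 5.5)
Your proposal is correct and follows the paper's proof essentially step by step. Both arguments use polarization to get a unit vector \(g\) with \(\langle Sg,g\rangle\neq0\), set \(u=\pi(z_0)g\) at a Lebesgue density point \(z_0\) of \(\Omega\) so that \(Q_Su(z_0)=\langle Sg,g\rangle\), and conclude by continuity of \(z\mapsto Q_Su(z)\); as a minor remark, no phase bookkeeping is needed at \(z_0\), since \(\pi(z_0)^*\pi(z_0)g=g\) exactly by unitarity.
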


\begin{proof}
Since \(S\ne0\) and the Hilbert space is complex, there exists
\(v\in L^2(\rd)\) with \(\|v\|_{L^2}=1\) such that \(\langle Sv,v\rangle\ne0\) --- otherwise \(\langle Sf,f\rangle=0\) for every \(f\), so by polarization \(\langle Sf,g\rangle=0\) for every \(f,g\) would force \(S=0\).

Let \(z_0\) be a Lebesgue density point of \(\Omega\) and set \(u\coloneqq \pi(z_0)v\). Then \(\|u\|_{L^2}=1\) and 
\[Q_Su(z_0)=\langle \alpha_{z_0}(S)u,u\rangle=\langle \pi(z_0)S\pi(z_0)^*\pi(z_0)v,\pi(z_0)v\rangle=\langle Sv,v\rangle\ne0.\]
The map \(z\mapsto Q_Su(z)\) is continuous, because \(z\mapsto\pi(z)\) is strongly continuous and \(S\) is bounded, therefore there are a neighborhood \(U\) of \(z_0\) and a constant \(c>0\) such that \(|\Omega\cap U|>0\) and \(|Q_Su(z)|\ge c\) for all \(z\in U\). The claim then follows. 
\end{proof}

\begin{lemma}\label{lem-mixed}
For every \(1\le p<\infty\), \(S\in\cB\), \(f\in L^2(\rd)\), and \(\Omega\subset\rdd\) with finite measure,
\[ r_n\rightharpoonup0 \text{ in } L^2(\rd) \implies \|Q_S(r_n,f)\|_{L^p_z(\Omega)}\to0, \quad \|Q_S(f,r_n)\|_{L^p_z(\Omega)}\to0.\]
\end{lemma}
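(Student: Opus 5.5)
We need the mixed Cohen distribution \(Q_S(r,f)(z)=\langle \alpha_z(S)r,f\rangle\) (with the convention consistent with \(Q_Sf=\langle\alpha_z(S)f,f\rangle\)). Written out,
\[
Q_S(r_n,f)(z)=\langle S\pi(z)^*r_n,\pi(z)^*f\rangle=\langle r_n,\pi(z)S^*\pi(z)^*f\rangle .
\]
The plan is to combine pointwise convergence with a uniform bound and then apply dominated convergence on the finite-measure set \(\Omega\).

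\emph{Pointwise convergence.} Fix \(z\in\rdd\). The vector \(g_z\coloneqq \alpha_z(S^*)f=\pi(z)S^*\pi(z)^*f\) belongs to \(L^2(\rd)\) and does not depend on \(n\). Weak convergence \(r_n\rightharpoonup0\) therefore gives
\[
Q_S(r_n,f)(z)=\langle r_n,g_z\rangle\to0 .
\]
The same argument works for the other term:
\[
Q_S(f,r_n)(z)=\langle \alpha_z(S)f,r_n\rangle=\overline{\langle r_n,\alpha_z(S)f\rangle}\to0 .
\]
Note that compactness of \(S\) is not needed here, since the fixed vector \(f\) absorbs all the weak convergence.

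\emph{Domination.} Weakly convergent sequences are bounded by the uniform boundedness principle, so \(M\coloneqq\sup_n\|r_n\|_{L^2}<\infty\). Since \(\pi(z)\) is unitary,
\[
|Q_S(r_n,f)(z)|\le \|S\|_{\cB}\,M\,\|f\|_{L^2}
\]
for all \(z\) and \(n\), and the same bound holds for \(Q_S(f,r_n)\). The functions \(z\mapsto Q_S(r_n,f)(z)\) are continuous, hence measurable, by strong continuity of \(z\mapsto\pi(z)\). Because \(|\Omega|<\infty\) and \(p<\infty\), the constant \((\|S\|_{\cB}M\|f\|_{L^2})^p\) is integrable over \(\Omega\). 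Dominated convergence then yields \(\int_\Omega|Q_S(r_n,f)|^p\dd{z}\to0\), which proves the claim; the second term is handled identically.

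No step is a genuine obstacle. The only points needing care are invoking uniform boundedness to get \(M\), and stating that the restriction \(p<\infty\) together with \(|\Omega|<\infty\) is exactly what makes the constant dominating function integrable. For \(p=\infty\) the statement would fail, because the supremum over \(z\) is not controlled by pointwise convergence.
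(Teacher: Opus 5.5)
Your proposal is correct and follows essentially the same route as the paper: pointwise convergence by pairing $r_n$ against the fixed vector $\alpha_z(S)^*f=\alpha_z(S^*)f$ (or $\alpha_z(S)f$ for the other term), then the uniform bound $\|S\|_{\cB}\,M\,\|f\|_{L^2}$ from boundedness of weakly convergent sequences, and dominated convergence on the finite-measure set $\Omega$. Your extra remark that continuity in $z$ gives measurability is a small detail the paper leaves implicit.
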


\begin{proof}
For every $z \in \rdd$ we have $\langle \alpha_z(S)r_n,f\rangle = \langle r_n,\alpha_z(S)^*f\rangle$, so weak convergence gives $\langle \alpha_z(S)r_n,f\rangle\to0$. Similarly, we have $\langle \alpha_z(S)f,r_n\rangle = \overline{\langle r_n,\alpha_z(S)f\rangle} \to0$, so both mixed terms vanish pointwise on \(\Omega\). 

Since \(r_n\rightharpoonup0\), there exists \(M>0\) such that $\|r_n\|_{L^2}\le M$ for all $n$. We also have
\[ |\langle \alpha_z(S)r_n,f\rangle| \le \|\alpha_z(S)\|_{\cB}\|r_n\|_{L^2}\|f\|_{L^2}
= \|S\|_{\cB}\|r_n\|_{L^2}\|f\|_{L^2} \le M\|S\|_{\cB}\|f\|_{L^2}, \] and similarly \( |\langle \alpha_z(S)f,r_n\rangle| \le M\|S\|_{\cB}\|f\|_{L^2}\). It is clear that the constant function $z \mapsto M\|S\|_{\cB}\|f\|_{L^2}$ is $p$-integrable over $\Omega$ since $|\Omega|<\infty$, and the claim thus follows by dominated convergence. 
\end{proof}

\section{Positive existence results}\label{Sec:direct}

\subsection{Compact windows}
By the singular value theorem, every compact operator \(S \in \cK\) is a convergent sum of rank-one operators. As a consequence, any conclusion we reach in the rank-one case can be transplanted to compact operators. It is easily seen (see \cite[Proposition 5.1]{Nicola-Romero-Trapasso-2022} for details) that the concentration functional \textit{is} weakly continuous when \(S=g\otimes g\) for any non-trivial $g \in L^2(\rd)$. By using the singular value decomposition we can thus show weak continuity of the concentration functional for \textit{any} compact operator window \(S\). The key idea is to lift the question of continuity to the rank-one operators. The following lemma clarifies how lifting affects continuity.
\begin{lemma}\label{ConvergenceLemma}
    Consider \(f,f_n\in L^2(\R^d)\) for all \(n\in\N\). If \(f_n\rightharpoonup f\) in \(L^2(\R^d)\) then 
    \[
	f_n\otimes f_n \rightharpoonup^* f\otimes f
	\qquad\text{in} \quad \cS^1.
	\]
\end{lemma}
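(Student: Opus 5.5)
We must show that for every compact operator $K\in\cK$ we have $\tr\bigl((f_n\otimes f_n)K\bigr)\to\tr\bigl((f\otimes f)K\bigr)$, since $\cK^*=\S^1$ under the trace pairing. The plan is to compute the pairing with rank-one operators first, then pass to compact operators by density, using only a uniform $\S^1$ bound on the sequence.

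First, for $K=g\otimes h$ with $g,h\in L^2(\rd)$ we have
\[
\tr\bigl((f_n\otimes f_n)(g\otimes h)\bigr)=\langle g,f_n\rangle\langle f_n,h\rangle .
\]
Each factor converges by weak convergence, to $\langle g,f\rangle$ and $\langle f,h\rangle$ respectively. So the pairing converges to $\langle g,f\rangle\langle f,h\rangle=\tr\bigl((f\otimes f)(g\otimes h)\bigr)$. By linearity, convergence then holds for every finite-rank $K$.

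Second, a weakly convergent sequence is bounded by the uniform boundedness principle, so $\sup_n\|f_n\|_{L^2}\le M$. Hence $\|f_n\otimes f_n\|_{\S^1}=\|f_n\|_{L^2}^2\le M^2$, and also $\|f\otimes f\|_{\S^1}\le M^2$.

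Third, given $K\in\cK$ and $\varepsilon>0$, pick a finite-rank $F$ with $\|K-F\|_{\cB}<\varepsilon$; this is possible by the singular value decomposition, since finite-rank operators are norm-dense in $\cK$. Using $|\tr(TA)|\le\|T\|_{\S^1}\|A\|_{\cB}$, the standard $\varepsilon/3$ argument gives
\[
\limsup_n\bigl|\tr\bigl((f_n\otimes f_n-f\otimes f)K\bigr)\bigr|\le 2M^2\varepsilon .
\]
Since $\varepsilon$ is arbitrary, the pairing converges to zero, which proves the claim.

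No step is a real obstacle. The only point requiring care is that weak-* convergence must be understood with respect to the predual $\cK$, not $\cB$. Convergence against all of $\cB$ fails in general: take $f_n$ orthonormal and test against $\mathrm{Id}$, which gives $\tr(f_n\otimes f_n)=1\not\to0$.
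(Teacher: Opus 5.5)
Your proof is correct, but it takes a different route from the paper. The paper tests directly against an arbitrary $T\in\cK$ and rewrites $\tr\bigl((f_n\otimes f_n)T^*\bigr)=\langle T^*f_n,f_n\rangle$. It then uses that compact operators map weakly convergent sequences to norm convergent ones, so $T^*f_n\to T^*f$ strongly. Together with $\sup_n\|f_n\|_{L^2}<\infty$, this splits the difference into a strong-times-bounded term and a term that vanishes by weak convergence.

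You never use this property of compact operators. Instead you check convergence only on rank-one test operators, where it reduces to convergence of the scalars $\langle g,f_n\rangle$ and $\langle f_n,h\rangle$. You then pass to all of $\cK$ through norm density of finite-rank operators and the uniform bound $\|f_n\otimes f_n\|_{\S^1}\le M^2$.

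The paper's argument is shorter and exploits the rank-one structure of $f_n\otimes f_n$ directly. Yours is the generic ``bounded plus convergence on a dense subset of the predual'' template, and it proves slightly more. Any $\S^1$-bounded sequence converging in the weak operator topology to a trace-class limit converges weak-* in $\S^1$. The rank-one structure enters your argument only when computing the matrix coefficients.

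Your closing remark about failure of convergence against $\mathrm{Id}$ matches the remark the paper makes right after this lemma.
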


\begin{proof} 
We need to prove that, for an arbitrary $T \in \cK$,
	\[
	\tr\bigl((f_n\otimes f_n)T^*\bigr) 	\to \tr\bigl((f\otimes f)T^*\bigr).
	\]
    A straightforward computation shows that 
    \[
    \tr\big((f_n\otimes f_n)T^*\big)=\langle T^*f_n,f_n\rangle = \langle f_n,Tf_n\rangle,
    \]
    hence it suffices to prove that \(\langle T^*f_n,f_n\rangle\to \langle T^*f,f\rangle\). Since \(f_n\rightharpoonup f\) we have that $(f_n)_n$ is bounded in \(L^2(\R^d)\), and also that \(T^*f_n\to T^*f\) strongly in \(L^2(\R^d)\) due to compactness of \(T^*\). As a result, we obtain
    \begin{align*}
        \big|\langle T^*f_n,f_n\rangle-\langle T^*f,f\rangle\big|
        &\leq \big|\langle T^*(f_n-f),f_n\rangle\big|+\big|\langle T^*f,f_n-f\rangle\big|\\
        &\leq \|T^*f_n-T^*f\|_{L^2}\,\Big(\sup_n\|f_n\|_{L^2}\Big)+\big|\langle T^*f,f_n-f\rangle\big|\to 0,
    \end{align*} that is the claim. 

\end{proof}

\begin{remark} Note that the rank-one operator \(f\otimes f\) is in \(\S^p\) for any \(f\in L^2(\R^d)\) and \(p\in [1,\infty]\). One can thus use the argument of Lemma \ref{ConvergenceLemma} to show that \(f_n\otimes f_n\) converges to \(f\otimes f\) weakly in \(\S^p\) for any \(p\in (1,\infty)\), weakly in \(\mathcal{K}\), and weak-* in \(\cB\). Although Lemma \ref{ConvergenceLemma} ensures weak-* convergence in \(\S^1\), \(f_n\otimes f_n\) does \textit{not} converge weakly to \(f\otimes f\) in \(\S^1\). To see this, recall that \(\left(\S^{1}\right)^*=\cB\). If we pair with \(T=\mathrm{Id}\) we get\begin{align*}
    \tr\left((f_n\otimes f_n) \;\mathrm{Id}\right)=  \tr\left(f_n\otimes f_n\right)=\|f_n\|_{L^2}^2,
\end{align*} so weak convergence in \(\S^1\) ultimately requires norm convergence in \(L^2\). 
\end{remark}

The result in Lemma \ref{ConvergenceLemma} plays a critical role in proving our first main result. 

\begin{proof}[Proof of Theorem~\ref{CompactExistence}]
    Let us first show weak continuity of \(f\mapsto Q_Sf(z)=(f\otimes f)\star\widecheck{S}(z)=\tr\left((f\otimes f)\alpha_z(S)\right)\). To that end, fix \(z\in\R^{2d}\) and note that if \(f_n\rightharpoonup f\) then \(f_n\otimes f_n\rightharpoonup^* f\otimes f\) in \(\cS^1\) by Lemma \ref{ConvergenceLemma}. Since \(S\in \mathcal{K}\), so is \(\alpha_z(S)\). Consequently,  for all \(z\in\R^{2d}\) we have 
    \begin{align*}
        \lim_{n\rightarrow\infty} Q_Sf_n(z)=\lim_{n\rightarrow\infty}\tr\left((f_n\otimes f_n)\alpha_z(S)\right)=\tr\left((f\otimes f)\alpha_z(S)\right)=Q_Sf(z).
    \end{align*}
   Note furthermore that for any \(f\in L^2(\R^d)\) we have the pointwise estimate 
   \begin{align*}
        |Q_Sf(z)|\leq \|Q_Sf\|_{L^\infty}\leq \|f\otimes f\|_{\S^1}\|S\|_{\cB}=\|f\|_{L^2}^2\|S\|_{\cB},
    \end{align*}
    where the second inequality is Young's inequality for operator convolutions, and therefore for \(\{f_n\}\) the uniform bound
    \[ \abs{Q_S f_n(z)} \le M^2 \norm{S}_\cB, \qquad M=\sup_n \norm{f_n}_{L^2}<\infty, \quad z \in \rdd.\]  We can thus argue by dominated convergence to conclude:
    \begin{align*}
        \lim_{n\rightarrow\infty} \left(\int_{\Omega}|Q_Sf_n(z)|^p \dd{z}\right)^{1/p} &=\left(\lim_{n\to\infty}\int_{\Omega}|Q_Sf_n(z)|^p \dd{z}\right)^{1/p} \\&=\left(\int_{\Omega}\lim_{n\rightarrow\infty} |Q_Sf_n(z)|^p \dd{z}\right)^{1/p} =\left(\int_{\Omega} |Q_Sf(z)|^p \dd{z}\right)^{1/p} . 
    \end{align*}
    Existence of maximizers now follows by the direct method. Indeed, choose a normalized maximizing sequence \(\|f_n\|_{L^2}=1\) and pass to a weakly convergent subsequence \(f_{n_k}\rightharpoonup f\). We can assume \(S\neq0\), otherwise the claim is trivial, and then one has \(\Lambda_{p,\Omega}(S)>0\) by Lemma \ref{lem:nonzero-window-nonvanishing}. Weak continuity then gives \(J_S(f)=\Lambda_{p,\Omega}(S)\), so \(f\neq0\). If \(\|f\|_{L^2}<1\), by homogeneity we have
    \[ J_S(f/\|f\|_{L^2})=J_S(f)/\|f\|_{L^2}^2>\Lambda_{p,\Omega}(S), \]
    a contradiction. Thus \(\|f\|_{L^2}=1\), and weak convergence together with convergence of norms gives \(f_{n_k}\to f\) strongly in \(L^2\).
    \end{proof}

\begin{remark}
    It is important to emphasize here that it would not be possible to prove the above result if one started from the definition \(Q_\sigma f=Wf*\sigma\). Indeed, as shown in~\cite[Proposition 3.5]{Stra-Svela-Trapasso-2025}, the concentration functional \(f\mapsto \|Wf\|_{L^p(\Omega)}\) for the Wigner distribution is not even (weakly upper) semicontinuous. On the other hand, the weak continuity of the elementary tensor windows (hence, spectrograms) is key to the proof, and it is easy to realize that the same conclusions hold for generalized concentration problems where $\norm{Q_Sf}_{L^p(\Omega)}$ is replaced with $\norm{m \, Q_Sf}_{L^p}$ for an integrable phase-space weight $m \in L^p(\rdd)$. 
\end{remark}

\begin{remark} \label{rem-eucl} The proof of Theorem~\ref{CompactExistence} moves along representation-theoretic arguments, with no special role of the underlying Euclidean structure. It is easy to realize that the same argument extends indeed to QHA over abelian phase spaces \cite{fulsche}: The Euclidean phase space is replaced by a locally compact abelian phase space \(\Xi\) equipped with Haar measure, and the operator shifts are built from a projective unitary representation \(U \colon \Xi\to\mathcal U(\cH)\), namely \(\alpha_\xi(S)=U_\xi S U_\xi^*\) --- consider for concreteness the case \(\Xi=G\times\widehat G\), where \(G\) is a locally compact abelian group. Related extensions appear in Appendix \ref{app-eucl} below, and further developments can be found in \cite{gro_nilpotent}.
\end{remark}

Theorem \ref{CompactExistence} confirms the existence of an optimizer for all Cohen's classes with a compact operator window. In addition to the spectrograms \(|V_gf(z)|^2\), for which this existence was already known, there are two other important Cohen's classes with compact window. 
\begin{example}
    The first class is the Gaussian smoothed Wigner distributions \(Q_\sigma f=Wf*g_{\Sigma}\), where \(g_{\Sigma}(z)=(2\pi)^{-d}\sqrt{\det \Sigma^{-1}}\mathrm{exp}\left({-\frac{1}{2}\langle \Sigma^{-1}z,z\rangle}\right)\) is a \(2d\)-dimensional Gaussian with covariance matrix \(\Sigma\). Investigations on these Cohen's class distributions have mostly been concerned with positivity, see~\cite{Grochenig,deBruijn,Yvon,CdGN2019}. When the matrix \(\Sigma+\frac{i}{4\pi}J\) is positive semidefinite the distribution is non-negative for all \(f\in L^2(\R^d)\). 
    
    In the operator formulation, these Cohen's classes coincide with \(Q_Sf = f\otimes f \star \widecheck{L}_{g_{\Sigma}}\), and the Weyl transform of \(g_{\Sigma}\) is known to be in \(\S^1\) for all \(\Sigma\) satisfying the above condition~\cite{CdGN2019}. As such, Theorem~\ref{CompactExistence} applies. Note that when \(d=1\) and \(\Sigma=\frac{2E+1}{4\pi}\mathrm{Id}\), $S$ corresponds to the thermal states in quantum mechanics: 
    \[ L_{g_\Sigma}=\frac{1}{E+1}\sum_{n=0}^\infty \left(\frac{E}{E+1}\right)^n\phi_n\otimes\phi_n,\] where \(\phi_n\) denotes the \(n\)-th Hermite function.  
\end{example} 

\begin{example}
    Another relevant class is that of smoothed spectrograms, that is \(F*|V_gf|^2\) with \(F\in L^p(\R^{2d}),\; p\in[1,\infty)\). These distributions are connected to convolutional neural networks, and represent the effect of a convolutional layer on a spectrogram, see~\cite{BasicFilter,InsideSpec}. 
    The operator window \(S\) is in this case the function operator convolution $F\star g\otimes g$. By Young's inequality \(\|F\star g\otimes g\|_{S^p}\leq \|F\|_{L^p}\|g\|_{L^2}^2\), and so \(F\star g\otimes g\) is compact and Theorem~\ref{CompactExistence} applies.
\end{example}
\begin{remark}
        Theorem~\ref{CompactExistence} might appear of limited scope at a first glance. Indeed, aside from the spectrograms most of the distinguished Cohen's class distributions historically used in applications (see~\cite[Chapter 4]{HlawatschAuger} or \cite[Chapter 2]{FlandrinTimeFreqScale}) use sinusoidal or chirp like smoothing functions \(\sigma\), resulting in a noncompact operator window \(S\). On the other hand, compact Cohen's class distributions are essentially weighted sums of spectrograms --- more precisely, operator-norm limits of linear combinations of finite-rank windows. Moreover, with the advent of deep learning, we would argue these representations are by far the most natural from the perspective of data-driven time-frequency analysis. 
        In addition to the vast amount of spectrogram-based methods in machine learning~\cite{SpecAugment} and signal processing~\cite{Smith11}, it has also been shown that, starting from spectrogram input data, convolutional neural networks (CNNs) are able to learn the properties of other time-frequency representations~\cite{BasicFilter}. Note that when training on spectrograms the convolutional part of a CNN corresponds to convolving with a compactly supported kernel \(m\in L^1(\R^{2})\), which in the QHA approach represents a change of compact Cohen's class from \(|V_gf(z)|^2\) to \(Q_{m\star(g\otimes g)}f\) --- precisely the representations covered by Theorem~\ref{CompactExistence}. 
\end{remark}

While an optimizer for Problem~\ref{Problem} exists for any compact \(S\), it is highly non-trivial to determine the value of the supremum without any further assumptions. We already mentioned that the best estimate one can get is in general \( \Lambda_{p,\Omega}(S)\leq |\Omega|^{1/p}\|S\|_{\cB}\). However, under the assumption of positivity, Jensen's inequality provides a slight improvement. The following result is a refinement of \cite[Proposition 3.2]{LocalStruc}. 
\begin{prop}[Jensen's inequality for convolution]
    Let \(\Phi\) be a nonnegative, convex and continuous function on \(\R^+\) with \(\Phi(0)=0\), and let \(T\in \S^1\) be positive with \(\|T\|_{\S^1}=1\). If \(S\) is a positive compact operator 
    then for all \(z\in \R^{2d}\)
    \begin{align*}
         \Phi\bigl((T\star S)(z)\bigr)\leq \bigl(T\star \Phi(S)\bigr)(z),
    \end{align*}
    where \(\Phi(S)\) is defined via functional calculus.
\end{prop}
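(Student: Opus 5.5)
The plan is to reduce the statement to the classical scalar Jensen inequality by diagonalizing $S$ and reading $T\star S(z)$ as an average of its eigenvalues against a probability measure. Since $S$ is positive and compact, the spectral theorem gives $S=\sum_k \lambda_k\, e_k\otimes e_k$ with $\lambda_k\ge 0$ and $\{e_k\}$ orthonormal. We complete $\{e_k\}$ to an orthonormal basis of $L^2(\rd)$ by adjoining an orthonormal basis of $\ker S$, and assign eigenvalue $0$ to those vectors. By functional calculus, $\Phi(S)=\sum_k \Phi(\lambda_k)\, e_k\otimes e_k$. Because $\Phi(0)=0$, the kernel contributes nothing to $\Phi(S)$, and the series is consistent. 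We also note that $\Phi(S)$ is bounded, since $\Phi$ is continuous on the compact set $[0,\|S\|_{\cB}]$, so $T\star\Phi(S)$ is well defined.

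Fix $z$. Recall that $T\star S(z)=\tr\bigl(T\alpha_z(\widecheck S)\bigr)$ and that $\alpha_z(\widecheck S)=\sum_k\lambda_k\, u_k\otimes u_k$ with $u_k=\pi(z)Pe_k$. The vectors $\{u_k\}$ again form an orthonormal basis, because $\pi(z)P$ is unitary. The same change of basis gives $\alpha_z(\widecheck{\Phi(S)})=\sum_k\Phi(\lambda_k)\, u_k\otimes u_k$. Computing the traces in the basis $\{u_k\}$, we get
\[
T\star S(z)=\sum_k \lambda_k\, w_k,\qquad T\star\Phi(S)(z)=\sum_k \Phi(\lambda_k)\, w_k,\qquad w_k\coloneqq\langle Tu_k,u_k\rangle .
\]
The interchange of trace and sum is justified by $T\in\S^1$ together with the uniformly bounded coefficients. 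Positivity of $T$ gives $w_k\ge0$, and $\sum_k w_k=\tr T=\|T\|_{\S^1}=1$. So $(w_k)_k$ is a probability vector.

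The claim is then the discrete Jensen inequality $\Phi\bigl(\sum_k\lambda_k w_k\bigr)\le\sum_k\Phi(\lambda_k)w_k$ for a countable convex combination of points in $[0,\|S\|_{\cB}]$. This is the only step requiring care, since the combination may be infinite. For finite sums it is just convexity. In general, we apply the finite inequality to the truncations
\[
\sum_{k\le N}\lambda_k w_k+\Bigl(1-\sum_{k\le N}w_k\Bigr)\cdot 0 .
\]
Using $\Phi(0)=0$, this bounds $\Phi$ of the truncated mean by $\sum_{k\le N}\Phi(\lambda_k)w_k$. We then let $N\to\infty$, using continuity of $\Phi$ on the left and monotone convergence on the right, which applies because $\Phi\ge0$.

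An alternative route to the same inequality is to use a supporting line $\Phi(t)\ge\Phi(m)+c(t-m)$ at the mean $m=T\star S(z)$ and integrate against the probability vector $w$. That route needs $m$ in the interior or one-sided derivatives at the endpoints, so the truncation argument seems cleaner.
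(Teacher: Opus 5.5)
Your proof is correct and takes essentially the paper's route. Both arguments diagonalize $S$ (equivalently $\widecheck S$), read $T\star S(z)$ and $T\star\Phi(S)(z)$ as averages of $\lambda_k$ and $\Phi(\lambda_k)$ against the probability weights $\langle T\pi(z)Pe_k,\pi(z)Pe_k\rangle$, and then apply the scalar Jensen inequality. Two details the paper leaves implicit are supplied by your version: the completion of the eigenbasis by an orthonormal basis of $\ker S$, and the truncation argument for the countable Jensen inequality, which uses $\Phi(0)=0$ and $\Phi\ge 0$.
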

\begin{proof}
    Firstly, note that since \(\Phi(0)=0\), \(\Phi(S)\) will also be compact. This ensures that \(T\star \Phi(S)\) is both bounded (by Young's inequality) and continuous in \(z\) (by~\cite[Proposition 4.6]{Bible1}). Since \(S\) is positive and compact, the same holds for $\widecheck{S}$ and we may perform the spectral decomposition \(\widecheck{S}=\sum_{n=0}^\infty \lambda_n \psi_n\otimes \psi_n\). Fix then \(z\in\R^{2d}\) and define 
    \[ a_n(z) \coloneqq \tr\bigl(T\alpha_z(\psi_n\otimes\psi_n)\bigr)=\langle T\pi(z)\psi_n,\pi(z)\psi_n\rangle. \] 
    It is then clear, by positivity of $T$ and since \((\pi(z)\psi_n)_{n=0}^\infty\) is an orthonormal basis,  that $a_n(z)\geq0$ and $\sum_{n=0}^\infty a_n(z)=\tr(T)=1$, so \((a_n(z))_{n=0}^\infty\) yields a probability distribution on \(\N_0\). As such, since \(\lambda_n\in[0,\|S\|_{\cB}]\) and \(\Phi\) is continuous and convex on this compact interval, Jensen's inequality gives
    \[ \Phi\bigl((T\star S)(z)\bigr) = \Phi\left(\sum_{n=0}^\infty \lambda_n a_n(z)\right) \leq \sum_{n=0}^\infty \Phi(\lambda_n)a_n(z). \]
    To conclude, since $\widecheck S=P S P$ implies $\Phi(\widecheck S)=P\Phi(S)P=\widecheck{\Phi(S)}$, we obtain 
    \[ \widecheck{\Phi(S)} = \sum_{n=0}^\infty \Phi(\lambda_n)\,\psi_n\otimes\psi_n, \] and thus
    \[ (T\star\Phi(S))(z) = \tr\bigl(T\alpha_z(\widecheck{\Phi(S)})\bigr) = \sum_{n=0}^\infty \Phi(\lambda_n)a_n(z). \]
    The claim now follows by comparison. 
    
\end{proof}
If we assume \(\|f\|_{L^2}=1\), then $S\ge 0$ implies $Q_Sf\ge0$, so the special case \(\Phi(x)=x^p\), \(T=f\otimes f\) and \(\widecheck S\) in place of \(S\) of the above inequality yields
\begin{align*}
    \|Q_Sf\|_{L^p(\Omega)}^p &=\int_\Omega(Q_Sf(z))^p \dd{z}\ \leq\int_\Omega Q_{S^p}f(z) \dd{z} =\left\langle H_{\Omega,S^p}f,f\right\rangle, 
 \end{align*} where we recall the definition \( H_{\Omega,S^p}=\chi_\Omega\star S^p=\int_\Omega \alpha_z(S^p) \dd{z}\). Therefore, we obtain 
\[
    \Lambda_{p,\Omega}(S)\leq \|H_{\Omega,S^p}\|_{\cB}^{1/p},
\] which can be sharper than Young's universal estimate, since \(\|H_{\Omega,S^p}\|_{\cB} \leq |\Omega|\|S^p\|_{\cB}\).

\subsection{Positive mechanisms beyond compactness}
It is clear that, although the proof strategy of Theorem \ref{CompactExistence} fails without the assumption \(S \in \cK\), that does not prevent existence of optimizers beyond the compact realm. Let us revisit the example of \(S=\mathrm{Id}\). We have that \(Q_{\mathrm{Id}}f(z)=\|f\|_{L^2}^2\), therefore the supremum \eqref{Problem} is \(|\Omega|^{1/p} \) and any normalized \(f\) is an optimizer --- despite failure of weak continuity in \(f\). 

With this observation in mind, we can extend the results for the compact case to positive compact perturbations of the identity. The crucial point here is that we are able to control the noncompact part of \(S\).

\begin{prop}\label{prop:perturbation}
Let \(S_0\in\cK(L^2(\rd))\) be positive, let \(c\ge0\), and set \(S=c\,\mathrm{Id}+S_0\). Let \(1\le p<\infty\), and let \(\Omega\subset\rdd\) satisfy \(0<|\Omega|<\infty\). If \(\|f_n\|_{L^2}=1\) and \(f_n\rightharpoonup f\), then
\[
    J_S(f_n) \longrightarrow \bigl\|Q_Sf+c(1-\|f\|_{L^2}^2)\bigr\|_{L^p(\Omega)} = \bigl\|c+Q_{S_0}f\bigr\|_{L^p(\Omega)} .
\]
The supremum \(\Lambda_{p,\Omega}(S)\) is therefore attained.
\end{prop}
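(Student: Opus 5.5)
The plan is to split $Q_S$ linearly as $Q_Sf_n = c\,Q_{\mathrm{Id}}f_n + Q_{S_0}f_n$. Since $\alpha_z(\mathrm{Id})=\mathrm{Id}$, we have $Q_{\mathrm{Id}}f_n(z)=\langle f_n,f_n\rangle=\|f_n\|_{L^2}^2=1$ for every $z$. Hence
\[
Q_Sf_n(z)=c+Q_{S_0}f_n(z).
\]
For the compact part, the first half of the proof of Theorem~\ref{CompactExistence} (pointwise convergence via Lemma~\ref{ConvergenceLemma} plus dominated convergence) gives $Q_{S_0}f_n\to Q_{S_0}f$ pointwise on $\rdd$. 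The bound $|Q_{S_0}f_n|\le\|S_0\|_{\cB}$ is uniform. Therefore $c+Q_{S_0}f_n\to c+Q_{S_0}f$ pointwise, with the uniform bound $c+\|S_0\|_{\cB}$, which is $p$-integrable on $\Omega$ because $|\Omega|<\infty$. Dominated convergence then yields
\[
J_S(f_n)\to\|c+Q_{S_0}f\|_{L^p(\Omega)}.
\]
The identity with the first expression is just algebra: $Q_Sf=c\|f\|_{L^2}^2+Q_{S_0}f$, so $Q_Sf+c(1-\|f\|_{L^2}^2)=c+Q_{S_0}f$.

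For attainment, I would take a normalized maximizing sequence and extract a weakly convergent subsequence $f_n\rightharpoonup f$ with $\|f\|_{L^2}\le1$, so that $\Lambda_{p,\Omega}(S)=\|c+Q_{S_0}f\|_{L^p(\Omega)}$. The key point, and the only real obstacle, is to show that the limit value is achieved by a normalized function. Here positivity enters: $S_0\ge0$ gives $Q_{S_0}f\ge0$, and $Q_{S_0}$ is $2$-homogeneous.

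If $f\ne0$, set $g=f/\|f\|_{L^2}$. Then $Q_{S_0}g=Q_{S_0}f/\|f\|_{L^2}^2\ge Q_{S_0}f$ pointwise, since $\|f\|_{L^2}\le1$ and $Q_{S_0}f\ge0$. Hence $0\le c+Q_{S_0}f\le c+Q_{S_0}g=Q_Sg$, and monotonicity of the $L^p$ norm gives $J_S(g)\ge\Lambda_{p,\Omega}(S)$. So $g$ is a maximizer.

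If $f=0$, then $\Lambda_{p,\Omega}(S)=c|\Omega|^{1/p}$, and any normalized $g$ gives $Q_Sg=c+Q_{S_0}g\ge c$, so $J_S(g)\ge c|\Omega|^{1/p}$. Hence every normalized $g$ is a maximizer.

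Without the sign condition the comparison $c+Q_{S_0}f\le c+Q_{S_0}g$ would fail. This is exactly the mechanism the negative example $\mathrm{Id}-\varphi_0\otimes\varphi_0$ is expected to exploit.
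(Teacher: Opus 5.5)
Your argument is correct. The convergence part matches the paper's: pointwise convergence of $Q_{S_0}f_n$ from the compact-window argument, $Q_{\mathrm{Id}}f_n\equiv1$, and dominated convergence.

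The attainment step takes a different route. You renormalize the weak limit and only need the non-strict pointwise comparison $0\le c+Q_{S_0}f\le c+Q_{S_0}(f/\|f\|_{L^2})$. You handle the degenerate case $f=0$ by observing that every unit vector then reaches $c|\Omega|^{1/p}$.

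The paper proceeds differently:
\begin{enumerate}
\item It first disposes of $S_0=0$.
\item It uses Lemma~\ref{lem:nonzero-window-nonvanishing} to produce a unit vector $u$ with $J_S(u)>c|\Omega|^{1/p}$, which rules out $f=0$.
\item It uses a strict version of your comparison, splitting according to whether $Q_{S_0}f\equiv0$ on $\Omega$, to rule out $0<\|f\|_{L^2}<1$.
\end{enumerate}

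Your route is shorter and avoids the nonvanishing lemma, but it only exhibits some maximizer. The paper's route shows more: when $S_0\neq0$, the weak limit of every normalized maximizing sequence already has norm one. Hence maximizing sequences are strongly precompact in $L^2(\rd)$, the analogue of the final assertion of Theorem~\ref{CompactExistence}.
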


\begin{proof}
Since \(S_0\) is compact, the compact-window argument gives \( Q_{S_0}f_n\to Q_{S_0}f \) in \(L^p(\Omega)\). Moreover, since \(\|f_n\|_{L^2}=1\), we have \(Q_Sf_n=c+Q_{S_0}f_n\). On the other hand, \(Q_Sf=c\|f\|_{L^2}^2+Q_{S_0}f\), hence 
\[ Q_Sf_n\to c+Q_{S_0}f=Q_Sf+c(1-\|f\|_{L^2}^2)\qquad\text{in }L^p(\Omega),\]
which proves the first part of the claim. 

Let us now prove attainment. If \(S_0=0\), every normalized \(f\) is an optimizer, so we assume \(S_0\neq0\). By Lemma~\ref{lem:nonzero-window-nonvanishing}, there exists \(u\in L^2(\rd)\) with \(\|u\|_{L^2}=1\) such that \(|Q_{S_0}u|>0\) on a subset of \(\Omega\) of positive measure. Since \(S_0\ge0\) we have \(Q_{S_0}u\ge0\), and hence \(Q_{S_0}u>0\) on a subset of \(\Omega\) of positive measure. As a result, 
\[ J_S(u)=\|c+Q_{S_0}u\|_{L^p(\Omega)}>c|\Omega|^{1/p}. \]
Now, let \(\{f_n\}\) be a normalized maximizing sequence and pass to a subsequence such that \(f_n\rightharpoonup f\). By the convergence formula proved before, we have
\[
\Lambda_{p,\Omega}(S)=\bigl\|c+Q_{S_0}f\bigr\|_{L^p(\Omega)}.
\]
If \(f=0\), this gives \(\Lambda_{p,\Omega}(S)=c|\Omega|^{1/p}\), contradicting the strict inequality above, therefore it must be \(f\neq0\). In particular, we claim that \(\|f\|_{L^2}=1\) --- in which case, the convergence formula gives \(J_S(f_n)\to J_S(f)\), hence the claimed attainment: \(J_S(f)=\Lambda_{p,\Omega}(S)\). Suppose otherwise \(0<\|f\|_{L^2}<1\) and renormalize to  \(g=f/\|f\|_{L^2}\). Since \(S_0\ge0\), we have
\[ Q_{S_0}g=\frac{1}{\|f\|_{L^2}^2}Q_{S_0}f\ge Q_{S_0}f. \]
If \(Q_{S_0}f\not\equiv0\) on \(\Omega\), then
\[ J_S(g)=\left\|c+\frac{1}{\|f\|_{L^2}^2}Q_{S_0}f\right\|_{L^p(\Omega)}>\|c+Q_{S_0}f\|_{L^p(\Omega)}=\Lambda_{p,\Omega}(S), \]
a contradiction. If \(Q_{S_0}f\equiv0\) on \(\Omega\), then \(\Lambda_{p,\Omega}(S)=c|\Omega|^{1/p}\), again contradicting the strict inequality above. 
\end{proof}

\begin{remark}
    The previous proof can be slightly generalized to show that, for every $K \in \cK$ and $c \in \C$ (without positivity assumptions), one has
    \[\Lambda_{p,\Omega}(c\,\mathrm{Id}+K) = \max_{0\le \rho\le 1,\ \|u\|_{L^2}=1} \left\|c+\rho Q_Ku\right\|_{L^p(\Omega)}.  \] 
    In particular, the maximum on the right-hand side is always attained, but existence of optimizers for the concentration problem associated with $S=c\,\mathrm{Id}+K$ requires a maximizer with $\rho =1$. The case $\rho<1$ leads to nonattainment of $\Lambda_{p,\Omega}(c\,\mathrm{Id}+K)$ due to loss of mass at infinity by a fraction of size $1- \rho$.
\end{remark}

Let us also introduce another family of noncompact operators for which the concentration problem has positive answer. The main motivation behind the introduction of this class revolves around the fact that, since we are concerned with local concentration problems, global compactness may be relaxed to a suitable notion of \textit{local} compactness to ensure existence of optimizers --- at least for positive operators: 
\begin{prop} \label{prop:PositiveLocalizedAverage}
Let \(S\in\cB\) be a positive operator, and fix a set \(\Omega\subset\rdd\) with finite positive measure, and $1 \le p < \infty$. 
If the mixed-state localization operator \(H_{\Omega,S}\) is compact, then the mapping $f \mapsto Q_Sf$ is sequentially weak-to-norm continuous $L^2(\rd)\to L^p(\Omega)$. 
\end{prop}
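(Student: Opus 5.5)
Fix $f_n\rightharpoonup f$ in $L^2(\rd)$ and set $r_n\coloneqq f_n-f\rightharpoonup0$; the sequence is bounded, say $\|r_n\|_{L^2}\le M$. Sesquilinearity gives
\[ Q_Sf_n=Q_Sf+Q_S(r_n,f)+Q_S(f,r_n)+Q_Sr_n . \]
By Lemma~\ref{lem-mixed}, the two mixed terms tend to $0$ in $L^p(\Omega)$. It therefore suffices to prove $\|Q_Sr_n\|_{L^p(\Omega)}\to0$ for every bounded sequence $r_n\rightharpoonup0$.

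The key step uses positivity to reduce from $L^p$ to $L^1$. Since $S\ge0$, the function $Q_Sr_n(z)=\langle\alpha_z(S)r_n,r_n\rangle$ is nonnegative. It is also uniformly bounded, with $0\le Q_Sr_n\le M^2\|S\|_{\cB}$, which follows from $\|\alpha_z(S)\|_{\cB}=\|S\|_{\cB}$. For $1\le p<\infty$ we thus get
\[ \|Q_Sr_n\|_{L^p(\Omega)}^p\le (M^2\|S\|_{\cB})^{p-1}\int_\Omega Q_Sr_n(z)\dd{z}. \]
Using the weak definition of the function--operator convolution $\chi_\Omega\star S$, the last integral equals $\langle H_{\Omega,S}r_n,r_n\rangle$. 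Fubini is harmless here because the integrand is nonnegative and bounded on a set of finite measure.

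Now we invoke the hypothesis. Since $H_{\Omega,S}$ is compact and $r_n\rightharpoonup0$, we have $H_{\Omega,S}r_n\to0$ in norm. Hence
\[ |\langle H_{\Omega,S}r_n,r_n\rangle|\le \|H_{\Omega,S}r_n\|_{L^2}\,M\to0, \]
exactly as in the proof of Lemma~\ref{ConvergenceLemma}. Combining this with the bound above yields $Q_Sf_n\to Q_Sf$ in $L^p(\Omega)$, which is the claimed sequential weak-to-norm continuity.

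The only delicate point is the identity $\int_\Omega Q_Sr\,\dd{z}=\langle H_{\Omega,S}r,r\rangle$. It holds by the definition of $H_{\Omega,S}$ as a weak integral, since $\chi_\Omega\in L^1(\rdd)$. The essential role of positivity is the $L^p$-to-$L^1$ interpolation: without it $Q_Sr_n$ could oscillate in sign, and the integral of $Q_Sr_n$ would no longer control the integral of $|Q_Sr_n|$.
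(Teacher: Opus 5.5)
Your proof is correct and follows the same route as the paper. You split $f_n=f+r_n$, dispose of the mixed terms with Lemma~\ref{lem-mixed}, and use positivity to identify $\|Q_Sr_n\|_{L^1(\Omega)}=\langle H_{\Omega,S}r_n,r_n\rangle\to0$ by compactness, then upgrade to $L^p(\Omega)$ through the uniform bound $(M^2\|S\|_{\cB})^{p-1}$, which is exactly the interpolation step the paper leaves implicit.
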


\begin{proof}
Let \(f_n\rightharpoonup f\) and set $r_n=f-f_n$, so that \(r_n\rightharpoonup0\). We need to prove that \(Q_Sf_n\to Q_Sf\) in \(L^p(\Omega)\). Since \(S\ge0\) and \(H_{\Omega,S} \in \cK\) by assumption, we have 
\[ \|Q_Sr_n\|_{L^1(\Omega)}= \int_\Omega\langle\alpha_z(S)r_n,r_n\rangle \dd{z} =\langle H_{\Omega,S}r_n,r_n\rangle\to0. \]
The uniform bound \(|Q_Sr_n(z)|\le\|S\|\bigl(\sup_n\|r_n\|_{L^2}^2\bigr)\) then upgrades the convergence to \(L^p(\Omega)\) for every finite \(p\). The desired conclusion follows after writing \(f_n=f+r_n\) and expanding $Q_Sf_n$, since the pure error term $Q_S r_n$ has just been handled and the mixed terms vanish by Lemma~\ref{lem-mixed}.
\end{proof}

\subsection{The $L^\infty$ optimization} \label{sec-linfty}
Let us consider now the endpoint case $p=\infty$, where we have an explicit characterization of the optimal concentration in terms of the numerical radius of $S$. 

\begin{prop}\label{Prop:NumericalRadius}
Let \(\Omega\subset\R^{2d}\) be such that $0<\abs{\Omega}$, and $S \in \cB$. Then
\[ \Lambda_{\infty,\Omega}(S)=w(S). \]
In particular, if the numerical radius of \(S\) is attained, then the supremum $\Lambda_{\infty,\Omega}(S)$ is attained as well. 
\end{prop}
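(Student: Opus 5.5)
The plan is to prove the two inequalities $\Lambda_{\infty,\Omega}(S)\le w(S)$ and $\Lambda_{\infty,\Omega}(S)\ge w(S)$ separately, using only the covariance identity $Q_Sf(z)=\langle \alpha_z(S)f,f\rangle$ and the continuity of $z\mapsto Q_Sf(z)$ already exploited in Lemma~\ref{lem:nonzero-window-nonvanishing}.

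For the upper bound, fix $f$ with $\|f\|_{L^2}=1$ and $z\in\rdd$. Since $\pi(z)$ is unitary,
\[ Q_Sf(z)=\langle S\pi(z)^*f,\pi(z)^*f\rangle, \qquad \|\pi(z)^*f\|_{L^2}=1, \]
so $|Q_Sf(z)|\le w(S)$ for every $z$. Hence $\|Q_Sf\|_{L^\infty(\Omega)}\le w(S)$, and taking the supremum over $f$ gives $\Lambda_{\infty,\Omega}(S)\le w(S)$.

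For the lower bound, I would reuse the construction from Lemma~\ref{lem:nonzero-window-nonvanishing}. Given $\varepsilon>0$, choose a unit vector $v$ with $|\langle Sv,v\rangle|>w(S)-\varepsilon$, and let $z_0$ be a Lebesgue density point of $\Omega$. This requires only $|\Omega|>0$, and finiteness of the measure is not needed. Set $u=\pi(z_0)v$. Then $Q_Su(z_0)=\langle Sv,v\rangle$. By continuity of $z\mapsto Q_Su(z)$, there is a neighbourhood $U$ of $z_0$ on which $|Q_Su|>w(S)-2\varepsilon$. Because $z_0$ is a density point, $|U\cap\Omega|>0$. This positive-measure requirement is the only subtle point: the $L^\infty(\Omega)$ norm is an essential supremum, so a single point with a large value is not enough. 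It follows that $\|Q_Su\|_{L^\infty(\Omega)}\ge w(S)-2\varepsilon$, and letting $\varepsilon\to0$ gives $\Lambda_{\infty,\Omega}(S)\ge w(S)$.

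For attainment, suppose $w(S)=|\langle Sv,v\rangle|$ for some unit vector $v$. Then the same $u=\pi(z_0)v$ satisfies $|Q_Su|\ge w(S)-\delta$ on a subset of $\Omega$ of positive measure for every $\delta>0$, so $\|Q_Su\|_{L^\infty(\Omega)}\ge w(S)$. Combined with the upper bound, this gives equality. I expect no real obstacle; the only care needed is the essential-supremum issue handled by the density-point argument.
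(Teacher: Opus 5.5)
Your proof is correct and follows the paper's argument essentially step for step. The upper bound via $Q_Sf(z)=\langle S\pi(z)^*f,\pi(z)^*f\rangle$ is the unitary invariance $w(\alpha_z(S))=w(S)$ that the paper invokes. The lower bound and the attainment claim use the same construction as the paper: the translated near-maximizer $\pi(z_0)v$ at a Lebesgue density point $z_0$ of $\Omega$, together with continuity of $z\mapsto Q_Su(z)$.
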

\begin{proof}
To prove the upper bound \(\Lambda_{\infty,\Omega}(S)\le w(S)\), note that for every \(\|f\|_{L^2}=1\) and $z \in \rdd$ we have \(Q_Sf(z)=\langle \alpha_z(S)f,f\rangle\). Invariance of the numerical radius under unitary conjugation yields \( \abs{Q_Sf(z)}\le w(\alpha_z(S))=w(S)\), and thus the claimed inequality. 

For the reverse one, fix \(\varepsilon>0\) and choose a normalized vector \(g\in L^2(\R^d)\) such that
$|\langle Sg,g\rangle|>w(S)-\varepsilon$. Moreover, choose a Lebesgue density point \(z_0\) of \(\Omega\) and set \(f=\pi(z_0)g\). Then $Q_Sf(z_0)=\langle Sg,g\rangle$, and since \(Q_Sf\) is continuous on \(\R^{2d}\), there exists a neighborhood \(U\) of \(z_0\) such that $|Q_Sf(z)|>w(S)-2\varepsilon$ for all $z\in U$. By construction, the set \(U\cap\Omega\) has positive measure and thus 
\[
\|Q_Sf\|_{L^\infty(\Omega)}\ge w(S)-2\varepsilon.
\]
Letting \(\varepsilon\to 0\) proves the reverse inequality.

Finally, it is clear that if the numerical radius is attained at some normalized \(g\), the same construction with a density point \(z_0\) of \(\Omega\) gives a function \(f=\pi(z_0)g\) such that \(\|Q_Sf\|_{L^\infty(\Omega)}=w(S)\). 
\end{proof}

This result thus shows that existence of $L^\infty$ concentration optimizers for $Q_Sf$ is realized by all the \textit{numerical radius attaining} window operators $S$. In fact, the class $\NRA(X)$ of numerical radius attaining operators on a Banach space $X$ has been widely investigated in functional analysis, see for instance \cite{acosta,berg,capel,paya}. For our purposes, it is enough to emphasize that $\cK \subset \NRA$ on every Hilbert space, in particular $X=L^2(\rd)$ (which we omit for conciseness), but noncompact members are abundant as well --- including, for instance, scalar multiples of the identity and self-adjoint norm attaining operators. 

Let us also emphasize that the characterization in terms of $\NRA$ is almost optimal, in the sense that attainment of the \(L^\infty\)-concentration problem need not imply numerical-radius attainment for arbitrary finite-measure regions, while boundedness of $\Omega$ closes the gap. To be more concrete, in dimension \(d=1\) consider the multiplication operator \(S=M_\varphi\) associated with \(\varphi(x)\coloneqq 1-e^{-|x|}\). Then \(S\ge0\) and \( w(S)=\|S\|_{\cB}= \|\varphi\|_\infty=1\). On the other hand, we claim that \(S\notin\NRA\). Indeed, if \(\|g\|_{L^2}=1\) then \(\langle Sg,g \rangle <1\) and for any \(f\in L^2(\R)\) with \(\|f\|_{L^2}=1\) we have explicitly 
\[ 0 \le Q_Sf(x,\xi)=\int_{\mathbb R}\varphi(t-x)|f(t)|^2\dd{t} \le 1,\]
hence by dominated convergence we infer \( Q_Sf(x,\xi)\to1\) as \( x\to+\infty\). Choose then \(R_n\to+\infty\) such that \(Q_Sf(R_n,0)>1-\dfrac{1}{2n}\). By continuity of \(Q_Sf\) we can find \(\varepsilon_n>0\) sufficiently small to ensure
\[ (x,\xi)\in [R_n,R_n+\varepsilon_n]\times[0,\varepsilon_n] \implies Q_Sf(x,\xi)>1-\frac1n \] and 
\(\sum_n\varepsilon_n^2<\infty\). Consider now the set
\[ \Omega = \bigcup_{n=1}^\infty\bigl([R_n,R_n+\varepsilon_n]\times[0,\varepsilon_n]\bigr). \]
We have \(|\Omega|<\infty\) by construction, and for every \(n\) the set $[R_n,R_n+\varepsilon_n]\times[0,\varepsilon_n]\subset\Omega$ has positive measure and over there it holds \(Q_Sf>1-1/n\), therefore \( \|Q_Sf\|_{L^\infty} \ge1-\frac1n\) for every \(n\). Since \(Q_Sf\le1\), we conclude that \(\|Q_Sf\|_{L^\infty(\Omega)}=1=w(S)\), hence \(f\) attains the \(L^\infty\)-concentration supremum, although \(S\notin\NRA\). 

The feature of the previous example is that the essential supremum is approached along the unbounded part of \(\Omega\), rather than attained at a phase-space point. This option is ruled out in the case where \(\Omega\) is bounded: By covariance, continuity of \(z\mapsto Q_Sf(z)\), and compactness of \(\overline\Omega\) we see that attainment of the \(L^\infty\)-problem implies \(S\in\NRA\).

\section{Negative results}\label{sec:Negative}

The results in Section~\ref{Sec:direct} show that an optimizer of the concentration Problem~\eqref{Problem} exists for all compact operators \(S\), and for some compact perturbations of the identity. On the other hand, the recent results from~\cite{Stra-Svela-Trapasso-2025} show existence of an optimizer in the noncompact cases of the Wigner distribution (\(S=2^dP\)) and the Born--Jordan distribution (\(S=\mathcal{F}_W^{-1}(\mathrm{sinc}(\pi x\cdot \xi))\)). One could therefore expect that the concentration problem has an optimizer for any bounded \(S\). In this short section we prove that this is not the case. In particular, we exhibit some interesting counterexamples with different features. 

Firstly, we will show that existence of an optimizer can fail for (negative) compact perturbations of the identity. 
\begin{prop}\label{Prop:IdMinusGaussAllp}
Let \(1\le p<\infty\), let \(\Omega\subset\R^{2d}\) be measurable with \(0<|\Omega|<\infty\), and set \(
S=\mathrm{Id}-\varphi_0\otimes \varphi_0\), where \(\varphi_0(t)=2^{d/4}e^{-\pi |t|^2}\). Then the optimal value of Problem~\eqref{Problem} is \(|\Omega|^{1/p}\), but the supremum is not attained.
\end{prop}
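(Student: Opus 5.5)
We compute $Q_S$ directly. For $\|f\|_{L^2}=1$ we have $\alpha_z(S)=\mathrm{Id}-\pi(z)\varphi_0\otimes\pi(z)\varphi_0$, and therefore
\[ Q_Sf(z)=\langle\alpha_z(S)f,f\rangle = 1-|V_{\varphi_0}f(z)|^2. \]
Since $|V_{\varphi_0}f(z)|\le\|f\|_{L^2}\|\varphi_0\|_{L^2}=1$, this gives $0\le Q_Sf\le 1$ pointwise. Integrating over $\Omega$ yields $J_S(f)\le|\Omega|^{1/p}$ for every normalized $f$, so $\Lambda_{p,\Omega}(S)\le|\Omega|^{1/p}$.

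For the lower bound we use escaping sequences. Take $f_n=\pi(z_n)\varphi_0$ with $|z_n|\to\infty$, so that $f_n\rightharpoonup0$. Then $|V_{\varphi_0}f_n(z)|^2=|V_{\varphi_0}\varphi_0(z-z_n)|^2=e^{-\pi|z-z_n|^2}$, which tends to $0$ pointwise. It is bounded by $1$, and $|\Omega|<\infty$, so dominated convergence gives $\int_\Omega|V_{\varphi_0}f_n|^{2}\to0$. Hence $Q_Sf_n\to1$ in $L^p(\Omega)$, and so $J_S(f_n)\to|\Omega|^{1/p}$. Combined with the upper bound, this shows $\Lambda_{p,\Omega}(S)=|\Omega|^{1/p}$.

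For nonattainment, suppose some normalized $f$ satisfies $J_S(f)=|\Omega|^{1/p}$. Since $0\le Q_Sf\le1$, this forces $Q_Sf=1$ a.e.\ on $\Omega$, that is, $V_{\varphi_0}f=0$ a.e.\ on $\Omega$. The function $V_{\varphi_0}f$ is continuous, and in fact real-analytic: up to a nonvanishing Gaussian weight and a phase it is the Bargmann transform of $f$, an entire function of $x+i\xi$ (more precisely, $e^{\pi|z|^2/2}e^{\pi i x\xi}V_{\varphi_0}f(x,-\xi)$ is entire in $\bar z$ or $z$, depending on convention). A real-analytic function on $\R^{2d}$ that vanishes on a set of positive measure vanishes identically. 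This would give $V_{\varphi_0}f\equiv0$ and hence $f=0$ by the STFT inversion/Moyal formula $\|V_{\varphi_0}f\|_{L^2}=\|f\|\|\varphi_0\|$, contradicting $\|f\|_{L^2}=1$.

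The main point requiring care is this analyticity/zero-set step, and we would prove it via the Bargmann transform as just described. Note also that the case $p=1$ is covered by the same argument.
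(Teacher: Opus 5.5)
Your proof is correct and follows the paper's argument essentially step by step: the identity $Q_Sf=1-|V_{\varphi_0}f|^2$ for normalized $f$ gives the upper bound, escaping Gaussians $\pi(z_n)\varphi_0$ with dominated convergence give the lower bound, and real-analyticity of the Gaussian-window STFT (via the Bargmann transform) rules out $V_{\varphi_0}f=0$ on a set of positive measure. The exact phase factor in your Bargmann relation does not matter, since any nonvanishing real-analytic factor suffices for the zero-set argument.
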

\begin{proof}
By homogeneity it suffices to consider \(\|f\|_{L^2}=1\). In that case we have \(Q_Sf(z)=1-|V_{\varphi_0}f(z)|^2\), hence \(0\le Q_Sf(z)\le 1\) for every \(z\) and thus \(\|Q_Sf\|_{L^p(\Omega)}\le |\Omega|^{1/p}\). Let \(z_n\in \R^{2d}\) be a sequence such that \(|z_n|\to\infty\) and define \(f_n=\pi(z_n)\varphi_0\). Then \(\|f_n\|_{L^2}=1\), and by covariance of the spectrogram,
\[
|V_{\varphi_0}f_n(w)|^2=T_{z_n}\bigl(|V_{\varphi_0}\varphi_0|^2\bigr)(w).
\]
Note that \(|V_{\varphi_0}\varphi_0|^2\) is a centered Gaussian on phase space, therefore \(|V_{\varphi_0}f_n(w)|^2\to 0\) pointwise for almost every \(w\in \Omega\), and \(0\le |V_{\varphi_0}f_n|^2\le 1\). We thus infer \( Q_Sf_n(w)\to 1\) for a.e. \(w\in \Omega\), and by dominated convergence we conclude that the supremum equals \(|\Omega|^{1/p}\), since
\[
\|Q_Sf_n\|_{L^p(\Omega)}\to |\Omega|^{1/p}. 
\]
Assume by contradiction that a normalized maximizer \(f\) exists. Since \(0\le Q_Sf\le 1\) and \(\|Q_Sf\|_{L^p(\Omega)}=|\Omega|^{1/p}\), it should be \(Q_Sf=1\) almost everywhere on \(\Omega\). Equivalently, this means \( |V_{\varphi_0}f|^2=0 \) a.e. on \(\Omega\). Recall that the short-time Fourier transform of the Gaussian window \(\varphi_0\) coincides, up to a nonvanishing phase-space factor, with the Bargmann transform of \(f\), and is therefore real-analytic (indeed entire) in the corresponding complexified variables (see~\cite{Folland,Grochenig}). As such, vanishing on a set of positive measure implies \(V_{\varphi_0}f\equiv 0\), which in turn forces \(f=0\), contrary to the assumptions.
\end{proof}

The above counterexample clearly contrasts Proposition~\ref{prop:perturbation}. Similarly, we now show that the supremum in~\eqref{Problem} is not attained for the unitary operators \(S=\pi(z)\) whenever \(z\neq0\), in stark contrast to the special case \(S=\mathrm{Id}=\pi(0)\), and also to the almost unitary operator \(2^dP\). We start by computing the Cohen class representation associated with a time-frequency shift.
\begin{lemma}[{\cite[Remark 15]{Luef-Skrettingland-2021}}]
    Let \(z_0\in\R^{2d}\) and \(f\in L^2(\R^d)\). Then
    \begin{align*}
        Q_{\pi(z_0)}f(z)=e^{-2\pi i [z,z_0]}e^{\pi i x_0\cdot\xi_0} Af(-z_0),
    \end{align*}
    where \(A\) is the ambiguity function and \([z_0,z]\) denotes the symplectic inner product.
\end{lemma}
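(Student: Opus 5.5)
We compute straight from the definition $Q_Sf(z)=\langle \alpha_z(S)f,f\rangle$ with $S=\pi(z_0)$, so $Q_{\pi(z_0)}f(z)=\langle \pi(z)\pi(z_0)\pi(z)^*f,f\rangle$. The first step is to collapse the triple product $\pi(z)\pi(z_0)\pi(z)^*$ into a scalar multiple of $\pi(z_0)$. This uses the commutation relation for time-frequency shifts, $\pi(z)\pi(w)=e^{2\pi i[z,w]}\pi(w)\pi(z)$ (up to the sign convention fixed by $[u,z]=x_u\cdot\xi_z-\xi_u\cdot x_z$). Since $\pi(z)$ is unitary, this gives
\[
\pi(z)\pi(z_0)\pi(z)^* = c(z,z_0)\,\pi(z_0)
\]
with $|c(z,z_0)|=1$ a symplectic character in $z$. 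We then obtain $Q_{\pi(z_0)}f(z)=c(z,z_0)\langle \pi(z_0)f,f\rangle$.

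To fix $c$ concretely, we act on $h$ with $\pi(z)=M_\xi T_x$, $\pi(z)^*=T_{-x}M_{-\xi}$, and $\pi(z_0)=M_{\xi_0}T_{x_0}$. A direct computation gives
\[
\pi(z)\pi(z_0)\pi(z)^*h(y)=e^{2\pi i\xi\cdot y}e^{2\pi i\xi_0\cdot(y-x)}e^{-2\pi i\xi\cdot(y-x-x_0)}h(y-x_0)=e^{2\pi i(\xi\cdot x_0-\xi_0\cdot x)}\pi(z_0)h(y).
\]
The resulting phase is $e^{2\pi i(x_0\cdot\xi-\xi_0\cdot x)}=e^{2\pi i[z_0,z]}=e^{-2\pi i[z,z_0]}$, which matches the first factor in the claim.

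The second step identifies $\langle\pi(z_0)f,f\rangle$ with the ambiguity function. From the definitions, $Af(w)=e^{\pi i x_w\cdot\xi_w}V_ff(w)=e^{\pi i x_w\cdot\xi_w}\langle f,\pi(w)f\rangle$. We take $w=-z_0$ and use $\pi(-z_0)=e^{-2\pi i x_0\cdot\xi_0}\pi(z_0)^*$ (up to sign, to be checked). Then $\langle f,\pi(-z_0)f\rangle$ becomes a unimodular constant times $\langle\pi(z_0)f,f\rangle$, and solving gives $\langle\pi(z_0)f,f\rangle=e^{\pi i x_0\cdot\xi_0}Af(-z_0)$.

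The only real obstacle is sign and phase bookkeeping: the conventions for $[\cdot,\cdot]$, for $\pi(-z_0)$ versus $\pi(z_0)^*$, and for the phase $e^{\pi i x\cdot\xi}$ in $A$ must all be tracked consistently. We will verify this by evaluating both sides at $z=0$ and checking with a Gaussian $f$ as a sanity test.
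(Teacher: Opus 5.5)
Your two-step route is the paper's: it quotes $\alpha_z(\pi(z_0))=e^{2\pi i[z_0,z]}\pi(z_0)$ and then writes $\tr\bigl((f\otimes f)\pi(z_0)\bigr)=e^{\pi i x_0\cdot\xi_0}\mathcal{F}_W(f\otimes f)(-z_0)=e^{\pi i x_0\cdot\xi_0}Af(-z_0)$. You instead verify the conjugation identity by hand and go through $V_ff$ rather than $\mathcal{F}_W$, which is an inessential difference. Your final formulas are correct. However, the lemma is nothing but a phase identity, and two intermediate steps are wrong as written and do not produce what you state.

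First, $\pi(z)^*h(w)=e^{-2\pi i\xi\cdot(w+x)}h(w+x)$, so the third factor in your display must be $e^{-2\pi i\xi\cdot(y-x_0)}$. With $e^{-2\pi i\xi\cdot(y-x-x_0)}$, the middle expression equals $e^{2\pi i x\cdot\xi}$ times your right-hand side.

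Second, $T_{-x_0}M_{-\xi_0}=e^{-2\pi i x_0\cdot\xi_0}M_{-\xi_0}T_{-x_0}$ gives $\pi(-z_0)=e^{+2\pi i x_0\cdot\xi_0}\pi(z_0)^*$, not the sign you wrote. Pulling the conjugated scalar out of the second slot then gives $\langle f,\pi(-z_0)f\rangle=e^{-2\pi i x_0\cdot\xi_0}\langle \pi(z_0)f,f\rangle$. Hence $Af(-z_0)=e^{-\pi i x_0\cdot\xi_0}\langle\pi(z_0)f,f\rangle$, exactly as needed. With your sign one gets $\langle\pi(z_0)f,f\rangle=e^{-3\pi i x_0\cdot\xi_0}Af(-z_0)$ instead.

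Similarly, with the paper's convention for $[\cdot,\cdot]$ the commutation rule is $\pi(z)\pi(w)=e^{-2\pi i[z,w]}\pi(w)\pi(z)$. This yields $\alpha_z(\pi(z_0))=e^{-2\pi i[z,z_0]}\pi(z_0)$ in one line.

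Your proposed Gaussian sanity check would indeed catch the second slip: $\langle\pi(z_0)\varphi_0,\varphi_0\rangle=e^{\pi i x_0\cdot\xi_0}e^{-\pi|z_0|^2/2}$, while $A\varphi_0(-z_0)=e^{-\pi|z_0|^2/2}$.
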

\begin{proof}
    It is well known, see for instance \cite[Lemma 3.1]{Bible1} (note that a different convention for the symplectic inner product is used there), that \(\alpha_z(\pi(z_0))=e^{2\pi i [z_0,z]}\pi(z_0)\) and \(\widecheck{\pi(z_0)}=\pi(-z_0)\). A straightforward computation using the definitions now shows that
    \begin{align*}
        Q_{\pi(z_0)}f(z)&=(f\otimes f)\star \widecheck{\pi(z_0)}=\tr\left(f\otimes f\ \alpha_z(\pi(z_0))\right)\\
        &=e^{2\pi i [z_0,z]}\tr\left(f\otimes f \ \pi(z_0)\right)=e^{2\pi i [z_0,z]}e^{\pi i x_0\cdot \xi_0}e^{-\pi i x_0\cdot \xi_0}\tr\left(f\otimes f \ \pi(z_0)\right)\\
        &=e^{-2\pi i [z,z_0]}e^{\pi i x_0\cdot \xi_0}\mathcal{F}_W\left(f\otimes f\right)(-z_0)\\
        &=e^{-2\pi i [z,z_0]}e^{\pi i x_0\cdot \xi_0}Af(-z_0). \qedhere
    \end{align*}
\end{proof}
The Cohen class related to a time-frequency shift has thus constant modulus, and is therefore quite simple. Nevertheless, the appearance of the ambiguity function obstructs the corresponding optimization problem.
\begin{prop}\label{AmbPoint}
    Let \(z_0=(x_0,\xi_0)\in \R^{2d}\setminus\{0\}\), and consider the Cohen class \(Q_{\pi(-z_0)}f(z)\). For $1 \le p < \infty$ and measurable $\Omega \subset \rdd$ with $0<|\Omega|<\infty$, the supremum
    \begin{align*}
        \sup_{f\in L^2(\R^d) \smo}\frac{\left(\int_{\Omega}|Q_{\pi(-z_0)}f(z)|^p \dd{z}\right)^{1/p} }{\|f\|_{L^2}^2}=\sup_{f\in L^2(\R^d) \smo}\frac{|\Omega|^{1/p} |Af(z_0)|}{\|f\|_{L^2}^2}
    \end{align*}
    is \(|\Omega|^{1/p} \), but the supremum is not attained.
\end{prop}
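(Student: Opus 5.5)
The plan is to reduce everything to the radar estimate \eqref{radar}, using the preceding lemma. Applying that lemma with \(z_0\) replaced by \(-z_0\), we get for every \(f\in L^2(\rd)\) and \(z\in\rdd\)
\[
Q_{\pi(-z_0)}f(z)=e^{2\pi i [z,z_0]}e^{\pi i x_0\cdot\xi_0}Af(z_0),
\]
so \(|Q_{\pi(-z_0)}f(z)|=|Af(z_0)|\) is constant in \(z\). Integrating over \(\Omega\) immediately yields \(\|Q_{\pi(-z_0)}f\|_{L^p(\Omega)}=|\Omega|^{1/p}|Af(z_0)|\), which is the claimed equality of the two quotients. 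By the radar estimate, \(|Af(z_0)|\le\|f\|_{L^2}^2\), with equality only at \(z_0=0\). Since \(z_0\neq0\), the inequality is strict for every \(f\neq0\). Hence the quotient is always strictly below \(|\Omega|^{1/p}\), and once we show that the supremum equals \(|\Omega|^{1/p}\), non-attainment follows at once.

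The remaining, and only substantive, step is to build a normalized sequence with \(|Af_n(z_0)|\to1\). Since \(|Af(z_0)|=|V_ff(z_0)|=|\langle f,\pi(z_0)f\rangle|\), we need normalized functions that are almost invariant, up to phase, under the single shift \(\pi(z_0)\). I would use wide, slowly varying Gaussians adapted to \(z_0\), namely \(f_n(y)=n^{-d/2}\varphi_0(y/n)\) with \(\varphi_0\) the normalized Gaussian from Proposition~\ref{Prop:IdMinusGaussAllp}. A direct Gaussian computation of \(|V_{f_n}f_n(x_0,\xi_0)|\) should give something like \(\exp(-\pi(|x_0|^2/(2n^2)+n^2|\xi_0|^2/2))\). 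This tends to \(1\) only when \(\xi_0=0\). In that case dilating wide works, and if instead \(x_0=0\) one dilates narrow.

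For a general \(z_0\) with both components nonzero, an isotropic dilation is insufficient. The natural fix is to exploit symplectic (metaplectic) covariance: there is a symplectic matrix \(\mathcal A\) with \(\mathcal A z_0=(|z_0|e_1,0)\), and \(|A(\mu(\mathcal A)f)(\mathcal A z)|=|Af(z)|\) for the metaplectic operator \(\mu(\mathcal A)\). This reduces the problem to a pure time shift, where wide Gaussians work.

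Alternatively, to avoid metaplectic machinery, one can take a wide Gaussian modulated by a linear chirp, \(f_n(y)=n^{-d/2}\varphi_0(y/n)e^{\pi i\, y\cdot Cy}\) with a symmetric matrix \(C\) chosen so that \(Cx_0=\xi_0\), possible since \(x_0\neq0\). The chirp converts the frequency shift into a translation along \(x_0\). If \(x_0=0\), one instead uses the Fourier-side version, that is, narrow Gaussians. The main obstacle is this explicit computation, or equivalently invoking metaplectic covariance of \(A\) correctly. Once \(|Af_n(z_0)|\to1\) is established, the supremum is \(|\Omega|^{1/p}\) and the strict radar inequality shows it is not attained.
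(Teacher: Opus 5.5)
Your proposal is correct and follows essentially the paper's own route: reduce via the preceding lemma to $|Af(z_0)|$, send dilated Gaussians to the limit for a pure time shift, transfer to general $z_0$ through metaplectic covariance of the ambiguity function, and conclude non-attainment from the strict radar inequality (for the transfer the paper uses a symplectic rotation in $\mathrm{Sp}(2d,\mathbb{R})\cap O(2d)$). Your chirp-modulated alternative, with a symmetric $C$ satisfying $Cx_0=\xi_0$, is also valid and is just a hands-on realization of the same reduction via a symplectic shear instead of a rotation.
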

\begin{proof}
    From Cauchy-Schwarz we have \(\frac{|\Omega|^{1/p} |Af(z_0)|}{\|f\|_{L^2}^2}\leq \frac{|\Omega|^{1/p} \|f\|_{L^2}^2}{\|f\|_{L^2}^2}=|\Omega|^{1/p} ,\) but we also have to show that this bound is in fact the least upper bound. Let us for a moment consider the special case \(z_0=(x_0,0)\), where \(|Q_{\pi(-x_0,0)}f(z)|=|Af(x_0,0)|\). If we define the dilated Gaussians \begin{align*}
        f_{\lambda}(y)=\left(\frac{\sqrt{2}}{\lambda}\right)^{\frac{d}{2}}e^{-\pi \frac{|y|^2}{\lambda^2}},
    \end{align*} then it is well known that \begin{align*}
        |Af_{\lambda}(x,\xi)|=e^{-\frac{\pi|x|^2}{2\lambda^2}}e^{-\frac{\pi\lambda^2|\xi|^2}{2}}.
    \end{align*}
    At the point \((x_0,0)\) we thus have \begin{align*}
      |Af_{\lambda}(x_0,0)|=e^{-\frac{\pi|x_0|^2}{2\lambda^2}}.  
    \end{align*}
    For the general case, let \(U\in \mathrm{Sp}(2d,\R)\cap O(2d) \simeq U(d)\) be the symplectic rotation such that \(U(|z_0|,0)=z_0=(x_0,\xi_0)\), and let \(\mu(U)\) be the corresponding metaplectic operator (cf.\ ~\cite[Eq. 4.23]{Folland}). By the symplectic covariance of the ambiguity function (see~\cite[Proposition 4.28]{Folland} or \cite[Corollary 13.1.2.4]{DG_book_BJ}) we have \begin{align*}
        |Q_{\pi(-z_0)}(\mu(U)f)(z)|=|A(\mu(U)f)(z_0)|=|Af(U^{-1}z_0)|=|Af(|z_0|,0)|. 
    \end{align*}
    So for any \(z_0\) we have\begin{align*}
        |A(\mu(U)f_{\lambda})(z_0)|=|Af_{\lambda}(|z_0|,0)|=e^{-\frac{\pi|z_0|^2}{2\lambda^2}}.  
    \end{align*}
    As such, \begin{align*}
        \sup_{f\in L^2(\R^d)}\frac{|\Omega|^{1/p} |Af(z_0)|}{\|f\|_{L^2}^2}\geq\frac{|\Omega|^{1/p} }{\|\mu(U)f_\lambda\|_{L^2}^2} e^{-\frac{\pi|z_0|^2}{2\lambda^2}}=|\Omega|^{1/p} e^{-\frac{\pi|z_0|^2}{2\lambda^2}}
    \end{align*}
    for all \(\lambda>0\).  Letting \(\lambda\rightarrow\infty\) shows that the supremum must be \(|\Omega|^{1/p} \).  However, the radar correlation estimate states that if \(z_0\neq 0\) then \begin{align*}
        |Af(z)|<|Af(0)|=\|f\|_{L^2}^2,
    \end{align*}
    and thus there is no nonzero function such that \(|Af(z_0)|=\|f\|_{L^2}^2\). Consequently, there is no optimizer for \(Q_{\pi(-z_0)}f\) when \(z_0\neq 0\).
\end{proof}

\begin{remark} In the endpoint case $p=\infty$, the same obstruction gives \[\Lambda_{\infty,\Omega}(\pi(-z_0))=1 \qquad \forall\ z_0\ne0,\] and the supremum is not attained. Indeed, the upper bound is \(|Af(z_0)|/\|f\|_{L^2}^2\le1\), while the lower bound is obtained by resorting to the same dilated Gaussian sequence. Equality would force \(\pi(z_0)f\) to be a unimodular multiple of \(f\), which is impossible for a nonzero \(L^2\)-function when \(z_0\ne0\).
\end{remark}

\section{Structural properties behind existence of optimizers} \label{sec-structure}
In light of the two classes of counterexamples just examined, as well as the results from the previous section, we can draw some general conclusions about the present state of the concentration problem~\eqref{Problem}. 

Firstly, it is not clear which properties of \(S\) imply the existence of an optimizer for \(Q_Sf\): The compact operators, the identity, \(2^dP\) and the Born--Jordan operator do not appear to share any properties that differentiate them from the time-frequency shifts and the arbitrary compact perturbations. Moreover, the operators for which \(Q_Sf\) has an optimizer seem to do so for different reasons or by means of different arguments --- some associate with continuous concentration functionals, while some do not. At the moment, it seems therefore inevitable that in order to fully understand the concentration problem for Cohen's class distributions with noncompact \(S\) one must treat each operator \(S\) on a case-by-case basis. 

While a full classification seems currently out of reach (if not impossible at all), in this section we make some progress in connection with this program. 

\subsection{General facts} To be more definite, consider the following classes of operators for fixed $1 \le p < \infty$ and \(0<|\Omega|<\infty\): 
\[\cBsup \coloneqq \Bigg\{S\in \cB : \Lambda_{p,\Omega}(S) = \sup_{f\in L^2(\R^d)\setminus\{0\}}\frac{\norm{Q_S f}_{L^p(\Omega)}}{\|f\|_{L^2}^2} \text{ is finite and attained}\Bigg\},\] 
\[\cBwc \coloneqq \{S\in \cB :  f_n \rightharpoonup f \implies \|Q_S f_n\|_{L^p(\Omega)} \to \|Q_Sf\|_{L^p(\Omega)}\},\]
\[\cBwtn\coloneqq\{S\in\cB : f_n \rightharpoonup f \implies \norm{Q_S f_n - Q_Sf}_{L^p(\Omega)} \to 0\},\]
\[\cBpt \coloneqq \{S\in \cB :  f_n\rightharpoonup f \implies Q_Sf_n(z)\to Q_Sf(z)\text{ for every }z\in \R^{2d}\}. \]
We clearly have the following chain of inclusions, the first being obtained in the proof of Theorem~\ref{CompactExistence} and the last by the direct method of calculus of variations:
\[ \cK \subseteq \cBpt \subseteq \cBwtn \subseteq \cBwc \subseteq \cBsup. \]
The previous findings naturally suggest some problems about the structure of these sets, which are explored below. Before, let us make some preliminary remarks of general nature. We note that $\cBsup$ is a balanced set and $\C\ \mathrm{Id} \subset \cBsup$, since for every $c \in \C$ and $f \in L^2(\rd)\smo$ one has
\[\frac{\|Q_{c\,\mathrm{Id}}f\|_{L^p(\Omega)}}{\|f\|_{L^2}^2}=|c|\,|\Omega|^{1/p}. \]
On the other hand, $c \mathrm{Id} \in \cBwc$ if and only if $c=0$, since for any weakly null normalized sequence $f_n \rightharpoonup 0$ we have $Q_{c \mathrm{Id}}f_n(z)=c$ and $Q_{c\mathrm{Id}}0(z)=0$. As a consequence, $\cBwc \subsetneq \cBsup$. 

These facts also imply that convexity of $\cBsup$ fails due to Proposition~\ref{Prop:IdMinusGaussAllp}, and that the naive conjecture about \(\mathcal{K}\) being the largest norm-closed linear subspace contained in \(\cBsup\) has a negative answer. The same conjecture with \(\cBsup\) replaced by \(\cBwc\) fails as well, in view of the Born--Jordan results in \cite{Stra-Svela-Trapasso-2025,Stra-Svela-Trapasso-2026}, since \(S_{\mathrm{BJ}}=\mathcal{F}_W^{-1}(\mathrm{sinc}(\pi x\cdot \xi)) \in \cBwc \setminus \cK\) in subcritical regimes --- and thus the whole ray $\C S_{\mathrm{BJ}}$ belongs to $\cBwc$. 

On the positive side, we have some interesting characterizations. Let us first introduce the notion of essential concentration value: 
\begin{definition}
For \(S\in\cB\), \(1\le p<\infty\), and \(\Omega \subset \rdd\) with \(0<|\Omega|<\infty\), define
\[ \Lambda^{\mathrm{ess}}_{p,\Omega}(S) \coloneqq \sup\left\{ \limsup_{n\to\infty}\|Q_Sf_n\|_{L^p(\Omega)}: \|f_n\|_{L^2}=1,\ f_n\rightharpoonup0 \right\}. \]
\end{definition}
Heuristically, this provides a quantitative measure of the fraction of the concentration that survives in $\Omega$ if all maximizing mass is allowed to escape weakly to infinity. For instance, it is easy to show that for every $K \in \cK$ and $c \in \C$ one has
\[ \Lambda^{\mathrm{ess}}_{p,\Omega}(c\,\mathrm{Id}+K)=|c|\,|\Omega|^{1/p}. \]
It is then natural to wonder whether this quantity is involved in the different behaviors illustrated in Propositions \ref{prop:perturbation} and \ref{Prop:IdMinusGaussAllp}. This is precisely the case, as a consequence of the following couple of results. Let us start with a simple gap criterion for existence of optimizers. 

\begin{prop}\label{Prop:StrictGap}
With the notation introduced above,
\[ \Lambda_{p,\Omega}(S)>\Lambda^{\mathrm{ess}}_{p,\Omega}(S) \implies S \in \cBsup. \]
\end{prop}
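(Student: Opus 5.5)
We plan to follow the concentration-compactness template: split a maximizing sequence into its weak limit and a weakly null remainder, and use the strict gap to rule out loss of mass. Take a normalized maximizing sequence \(\|f_n\|_{L^2}=1\) with \(J_S(f_n)\to\Lambda\coloneqq\Lambda_{p,\Omega}(S)\). Pass to a subsequence with \(f_n\rightharpoonup f\), and write \(r_n\coloneqq f_n-f\rightharpoonup0\). Since \(\langle f,r_n\rangle\to0\), we have
\[\|r_n\|_{L^2}^2\to1-\|f\|_{L^2}^2\eqqcolon\theta\in[0,1].\]
By sesquilinearity,
\[Q_Sf_n=Q_Sf+Q_S(f,r_n)+Q_S(r_n,f)+Q_Sr_n.\]
Lemma~\ref{lem-mixed} shows that the two mixed terms tend to \(0\) in \(L^p(\Omega)\). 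Hence
\[J_S(f_n)=\|Q_Sf+Q_Sr_n\|_{L^p(\Omega)}+o(1).\]

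Next we bound the remainder. We may pass to a further subsequence along which \(\limsup\|Q_Sr_n\|_{L^p(\Omega)}\) is a limit. If \(\theta>0\), then \(r_n/\|r_n\|_{L^2}\) is eventually defined, normalized and weakly null, so by the definition of the essential concentration value and homogeneity,
\[\limsup_n\|Q_Sr_n\|_{L^p(\Omega)}\le\theta\,\Lambda^{\mathrm{ess}}_{p,\Omega}(S).\]
If \(\theta=0\), the same quantity tends to \(0\) by the uniform bound \(|Q_Sr_n|\le\|S\|_{\cB}\|r_n\|_{L^2}^2\). The triangle inequality and homogeneity of the quotient then give
\[\Lambda\le J_S(f)+\theta\Lambda^{\mathrm{ess}}\le (1-\theta)\Lambda+\theta\Lambda^{\mathrm{ess}},\]
using \(J_S(f)\le\|f\|_{L^2}^2\Lambda=(1-\theta)\Lambda\).

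Now we use the strict gap. Rearranging gives \(\theta(\Lambda-\Lambda^{\mathrm{ess}})\le0\), and since \(\Lambda>\Lambda^{\mathrm{ess}}\), this forces \(\theta=0\). Then \(\|f\|_{L^2}=1\), so \(f_n\to f\) strongly, and the same inequality chain reads \(\Lambda\le J_S(f)\le\Lambda\). Thus \(f\) is a maximizer. Finiteness is automatic from \(\Lambda\le|\Omega|^{1/p}\|S\|_{\cB}\), so \(S\in\cBsup\).

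The main obstacle is the precise handling of the limsup along subsequences and of the degenerate case \(\theta=0\). Both are handled above by normalizing only when \(\|r_n\|_{L^2}\) is bounded away from zero, and otherwise using the uniform pointwise bound on \(Q_Sr_n\). No decoupling beyond the triangle inequality is needed, because the mixed terms already vanish in \(L^p(\Omega)\).
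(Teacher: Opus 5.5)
Your proof is correct and follows essentially the same route as the paper. Both arguments take the weak limit of a maximizing sequence, kill the mixed terms with Lemma~\ref{lem-mixed}, and use $J_S(f)\le\|f\|_{L^2}^2\Lambda$ together with the normalized remainder bounded by $\Lambda^{\mathrm{ess}}_{p,\Omega}(S)$ to reach $\Lambda\le(1-\theta)\Lambda+\theta\Lambda^{\mathrm{ess}}_{p,\Omega}(S)$. The only cosmetic difference is that you skip the paper's case split $\|f\|_{L^2}=1$ versus $\|f\|_{L^2}<1$ and its appeal to strong continuity, reading attainment directly off the inequality chain once $\theta=0$.
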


\begin{proof}
    Let \((f_n)\) be a normalized maximizing sequence and pass to a subsequence such that \(f_n\rightharpoonup f\), and set \(c=\|f\|_{L^2}\). If \(c=1\), then \(f_n\to f\) strongly in \(L^2(\rd)\), and strong continuity of the map \(f\mapsto Q_Sf\) from \(L^2(\rd)\) to \(L^p(\Omega)\) implies \( \|Q_Sf\|_{L^p(\Omega)}=\Lambda_{p,\Omega}(S)\), so \(f\) is an optimizer.

It remains to rule out the case \(c<1\). Setting \(r_n=f_n-f\), we thus have \(r_n\rightharpoonup0\) and \( \|r_n\|_{L^2}^2\to 1-\|f\|_{L^2}^2\). The triangle inequality gives 
\[  \Lambda_{p,\Omega}(S)  \le  \|Q_Sf\|_{L^p(\Omega)}+\limsup_{n\to\infty}\|Q_Sr_n\|_{L^p(\Omega)},\]
since the mixed terms involving $f$ and $r_n$ vanish in \(L^p(\Omega)\) by Lemma \ref{lem-mixed}, hence implying 
\[ \limsup_{n\to\infty}\|Q_Sr_n\|_{L^p(\Omega)} \ge (1-c^2)\Lambda_{p,\Omega}(S). \]
Set \(d_n=\|r_n\|_{L^2}\), so that \(d_n\to d \coloneqq(1-c^2)^{1/2}>0\). Define accordingly \(g_n=r_n/d_n\), hence \(g_n\rightharpoonup0\), \(\|g_n\|_{L^2}=1\), and
\[ \limsup_{n\to\infty}\|Q_Sr_n\|_{L^p(\Omega)} =d^2\limsup_{n\to\infty}\|Q_Sg_n\|_{L^p(\Omega)}
\le (1-c^2)\Lambda^{\mathrm{ess}}_{p,\Omega}(S), \]
or equivalently 
\[ \Lambda_{p,\Omega}(S) \le c ^2\Lambda_{p,\Omega}(S)+(1-c^2)\Lambda^{\mathrm{ess}}_{p,\Omega}(S), \]
which contradicts the strict gap assumption. Thus we must have \(c=1\) and so \(S\in \cBsup\).
\end{proof}

The essential concentration value turns out to be useful also in the following characterization. 

\begin{prop} \label{prop:wtn} The following conditions are equivalent for $S \in \cB$:
\begin{itemize}
    \item $S \in \cBwtn$.
    \item $S \in \cBwc$.
    \item $\Lambda^{\mathrm{ess}}_{p,\Omega}(S)=0$.
\end{itemize} Moreover, $\cBwtn= \cBwc= \ker \Lambda^{\mathrm{ess}}_{p,\Omega}$ is a norm-closed complex linear subspace of $\cB$. 
\end{prop}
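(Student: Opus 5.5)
We will prove the cyclic chain $\cBwtn \subseteq \cBwc$, $\cBwc \subseteq \ker\Lambda^{\mathrm{ess}}_{p,\Omega}$, and $\ker\Lambda^{\mathrm{ess}}_{p,\Omega}\subseteq \cBwtn$, and then verify closedness and linearity on the description that is easiest to handle.

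\emph{The first two inclusions.} The inclusion $\cBwtn\subseteq\cBwc$ follows from the reverse triangle inequality in $L^p(\Omega)$. For $\cBwc\subseteq\ker\Lambda^{\mathrm{ess}}_{p,\Omega}$, take any normalized $f_n\rightharpoonup0$. Weak continuity gives $\|Q_Sf_n\|_{L^p(\Omega)}\to\|Q_S0\|_{L^p(\Omega)}=0$, so every admissible $\limsup$ is zero and $\Lambda^{\mathrm{ess}}_{p,\Omega}(S)=0$.

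\emph{The key inclusion $\ker\Lambda^{\mathrm{ess}}\subseteq\cBwtn$.} Let $f_n\rightharpoonup f$ and set $r_n=f_n-f\rightharpoonup0$. Using sesquilinearity, expand
\[ Q_Sf_n=Q_Sf+Q_S(f,r_n)+Q_S(r_n,f)+Q_Sr_n. \]
The mixed terms tend to $0$ in $L^p(\Omega)$ by Lemma~\ref{lem-mixed}, so it suffices to show $\|Q_Sr_n\|_{L^p(\Omega)}\to0$.

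This is the main step: $\Lambda^{\mathrm{ess}}$ is defined only through normalized sequences, while $\|r_n\|_{L^2}$ need not converge. We argue by contradiction. Suppose that along a subsequence $\|Q_Sr_n\|_{L^p(\Omega)}\ge\varepsilon>0$. Since $\|Q_Sr_n\|_{L^p(\Omega)}\le|\Omega|^{1/p}\|S\|_{\cB}\|r_n\|_{L^2}^2$ by Young, we must have $\|r_n\|_{L^2}\ge\delta>0$. The sequence $(r_n)$ is bounded, so we may pass to a further subsequence with $\|r_n\|_{L^2}\to d\in[\delta,M]$. Then $g_n=r_n/\|r_n\|_{L^2}$ is normalized and $g_n\rightharpoonup0$, because the scalars $1/\|r_n\|_{L^2}$ converge. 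Homogeneity now gives
\[ \|Q_Sr_n\|_{L^p(\Omega)}=\|r_n\|_{L^2}^2\,\|Q_Sg_n\|_{L^p(\Omega)}. \]
The hypothesis $\Lambda^{\mathrm{ess}}_{p,\Omega}(S)=0$ forces the right-hand side to $0$, which is a contradiction. (If $\|r_n\|_{L^2}\to0$ along a subsequence, the Young bound already gives the conclusion.)

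\emph{Linear structure and closedness.} We work with the $\cBwtn$ description, since the map $S\mapsto Q_Sf$ is linear in $S$. Hence if $S,T\in\cBwtn$ and $a,b\in\C$, then
\[ Q_{aS+bT}f_n-Q_{aS+bT}f=a(Q_Sf_n-Q_Sf)+b(Q_Tf_n-Q_Tf)\to0 \]
in $L^p(\Omega)$, so $\cBwtn$ is a linear subspace.

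For norm-closedness, let $S_k\to S$ in $\cB$ with $S_k\in\cBwtn$, and let $f_n\rightharpoonup f$ with $\sup_n\|f_n\|_{L^2}\le M$. Since $\|Q_{S-S_k}h\|_{L^p(\Omega)}\le|\Omega|^{1/p}\|S-S_k\|_{\cB}\|h\|_{L^2}^2$, we obtain
\[ \|Q_Sf_n-Q_Sf\|_{L^p(\Omega)}\le 2|\Omega|^{1/p}\|S-S_k\|_{\cB}M^2+\|Q_{S_k}f_n-Q_{S_k}f\|_{L^p(\Omega)}. \]
The bound $\|f\|_{L^2}\le M$ holds by weak lower semicontinuity of the norm. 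A standard $\varepsilon/2$ argument then gives $S\in\cBwtn$.

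Alternatively, $\Lambda^{\mathrm{ess}}$ is a seminorm (subadditive and absolutely homogeneous, via $\limsup$ of sums) and satisfies $\Lambda^{\mathrm{ess}}(S)\le|\Omega|^{1/p}\|S\|_{\cB}$. Its kernel is therefore a closed subspace, which confirms the result.
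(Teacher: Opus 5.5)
Your proof is correct and follows essentially the same route as the paper. It uses the same decomposition $Q_Sf_n = Q_Sf + Q_S(f,r_n)+Q_S(r_n,f)+Q_Sr_n$ with Lemma~\ref{lem-mixed} for the mixed terms, the same renormalization $g_n=r_n/\|r_n\|_{L^2}$ to invoke $\Lambda^{\mathrm{ess}}_{p,\Omega}(S)=0$ (you argue by contradiction where the paper passes to subsequences of subsequences), and the same $\varepsilon/2$ approximation for norm-closedness; your extra observation that $\Lambda^{\mathrm{ess}}_{p,\Omega}$ is a seminorm bounded by $|\Omega|^{1/p}\|\cdot\|_{\cB}$ is a valid shortcut that gives linearity and closedness of the kernel at once.
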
 

\begin{proof} 
The non-trivial inclusion to be proved is $\cBwc \subset \cBwtn$. To this aim, \(S\in\cBwc\) and let \(f_n\rightharpoonup f\). After setting $r_n\coloneqq f_n-f$, we have \(r_n\rightharpoonup0\) and we can write 
\[ Q_Sf_n-Q_Sf=Q_Sr_n+Q_S(f,r_n)+Q_S(r_n,f). \]
Since \(S\in\cBwc\) we have $\|Q_Sr_n\|_{L^p(\Omega)}\to\|Q_S0\|_{L^p(\Omega)}=0$, and the mixed terms vanish in $L^p(\Omega)$ by Lemma \ref{lem-mixed} as well. 

The same argument shows that if $\Lambda^{\mathrm{ess}}_{p,\Omega}(S)=0$ then \(Q_Sr_n\to0\) in \(L^p(\Omega)\) (the mixed terms vanish), hence \(S\in\cBwtn\). In fact, it suffices to prove that every subsequence of \((r_n)\) has a further subsequence along which this convergence holds. Let then \((r_{n_k})\) be any subsequence, and up to further subsequences we may assume $\|r_{n_k}\|_{L^2}\to c$
for some \(c\ge0\). If \(c=0\) the claim is obvious since $S \in \cB$, so we assume \(c>0\). Therefore,
\[ g_k\coloneqq \frac{r_{n_k}}{\|r_{n_k}\|_{L^2}} \implies g_k\rightharpoonup0, \quad
\|g_k\|_{L^2}=1. \]
To conclude, the assumption $\Lambda^{\mathrm{ess}}_{p,\Omega}(S)=0$ yields
\[ \|Q_Sr_{n_k}\|_{L^p(\Omega)} = \|r_{n_k}\|_{L^2}^2\|Q_Sg_k\|_{L^p(\Omega)} \to0, \]
hence \(S\in\cBwtn\). The converse is trivial: If \(S\in\cBwc\) then every normalized weakly null sequence \((f_n)\) satisfies \(\|Q_Sf_n\|_{L^p(\Omega)}\to0\), so $\Lambda^{\mathrm{ess}}_{p,\Omega}(S)=0$. 

\(\cBwtn\) is clearly a complex linear subspace of $\cB$. For norm closedness, consider a sequence \(S_m\in\cBwtn\) such that \(S_m\to S\) in operator norm. The goal is to prove that \(S\in\cBwtn\), namely that \(f_n\rightharpoonup f\) implies $\|Q_Sf_n - Q_Sf\|_{L^p(\Omega)} \to 0$. By the triangle inequality we have
\[ \|Q_Sf_n - Q_Sf\|_{L^p(\Omega)} \le \|Q_{S-S_m}f_n-Q_{S-S_m}f\|_{L^p(\Omega)}+\|Q_{S_m}f_n-Q_{S_m}f\|_{L^p(\Omega)}, \]
and it suffices to take $m$ and $n$ large enough to ensure that, for arbitrarily chosen $\varepsilon>0$, 
\[ \|Q_{S-S_m}f_n-Q_{S-S_m}f\|_{L^p(\Omega)} \le |\Omega|^{1/p}\|S-S_m\|_{\cB}\bigl(\sup_n\|f_n\|_{L^2}^2+\|f\|_{L^2}^2\bigr) < \varepsilon/2, \] and $\|Q_{S_m}f_n-Q_{S_m}f\|_{L^p(\Omega)} < \varepsilon/2$. 
\end{proof}

\begin{remark} The previous proof actually gives something more. Since failure of compactness of maximizing sequences is measured by \(\Lambda^{\mathrm{ess}}_{p,\Omega}\), and the weakly continuous class is exactly the kernel of this escape level, existence of optimizers is governed by whether a maximizing sequence can retain a nonzero weak profile. As such, if \(S\notin\cBsup\) then no normalized maximizing sequence can have a weakly convergent subsequence whose weak limit has norm \(1\), and the strict-gap criterion forces $\Lambda_{p,\Omega}(S)=\Lambda^{\mathrm{ess}}_{p,\Omega}(S)$. 
\end{remark}

In light of the discussion, it is natural to introduce the following ``strict-gap'' class: 
\[ \cG \coloneqq \{S\in\cB : \Lambda_{p,\Omega}(S)>\Lambda^{\mathrm{ess}}_{p,\Omega}(S)\}. \]
We have seen that \(\cG\subset\cBsup\), and it is not difficult to show that \(\cG\) is open in the operator norm, due to continuity of $\Lambda^{\mathrm{ess}}_{p,\Omega}$ --- which is in turn a consequence of the Lipschitz estimate
\[ |\Lambda_{p,\Omega}(S)-\Lambda_{p,\Omega}(T)|\le |\Omega|^{1/p}\|S-T\|_{\cB}, \qquad S,T \in \cB,\]
applied uniformly to normalized weakly null sequences. Moreover, every nonzero \(S\in\cBwc\) is an interior point of \(\cBsup\), since \(\Lambda^{\mathrm{ess}}_{p,\Omega}(S)=0\) by the preceding proposition, while \(\Lambda_{p,\Omega}(S)>0\) by Lemma~\ref{lem:nonzero-window-nonvanishing}. While attainment is governed by strict gap, the threshold set
\[ \mathcal T (p,\Omega) = \{S:\Lambda_{p,\Omega}(S)=\Lambda^{\mathrm{ess}}_{p,\Omega}(S)\} \]
contains both attained and nonattained examples. The identity operator is attained at threshold, whereas \(\mathrm{Id}-\varphi_0\otimes\varphi_0\) and the nontrivial Weyl shifts exhibit escape and nonattainment. Thus equality with the essential level is the only place where nonattainment can occur, but it is not by itself a nonattainment criterion. To be more precise, we have 
\[ \cBsup= \cG \sqcup\cTatt, \qquad \cB\setminus\cBsup=\cTloss, \] where we set
\[ \cTatt \coloneqq \{S \in \cB:\Lambda_{p,\Omega}(S)=\Lambda^{\mathrm{ess}}_{p,\Omega}(S)\quad \text{ and } \quad  \Lambda_{p,\Omega}(S)\text{ is attained}\}, \]
\[ \cTloss \coloneqq \{S \in \cB:\Lambda_{p,\Omega}(S)=\Lambda^{\mathrm{ess}}_{p,\Omega}(S) \quad \text{ and } \quad \Lambda_{p,\Omega}(S)\text{ is not attained}\}. \]
These findings complement the previous chain:  
\[ \cK = \cBpt\subsetneq\cBwtn=\cBwc=\ker\Lambda^{\mathrm{ess}}_{p,\Omega} \subsetneq\cBsup. \]
The essential optimal value $\Lambda^{\mathrm{ess}}_{p,\Omega}$ then gives a norm on the quotient space $\cB/\cBwc$, as well as a seminorm on the Calkin algebra $\cB/\cK$ with kernel $\cBwc/\cK$ (recall that $\Lambda^{\mathrm{ess}}_{p,\Omega}$ is continuous).  

It is important to stress the dependence on $p$, $\Omega$ and $d$ of these results. In this connection, the results in \cite{Stra-Svela-Trapasso-2025} about the Wigner window $S_{\mathrm{W}} = 2^dP$ can be phrased as follows: 
\[ S_{\mathrm{W}}\in \cBsup, \qquad S_{\mathrm{W}}\notin\cK,\qquad S_{\mathrm{W}}\notin\cBwc \]
but it is open whether $S_{\mathrm{W}} \in \cG$ or $S_{\mathrm{W}} \in \cTatt$ in general. As already commented in \cite{Stra-Svela-Trapasso-2025}, there is reason to believe that $\cG$ is the correct regime, although $\cTatt$ cannot be excluded a priori. An advantage of the QHA formulation is that the strict-gap criterion allows us to make this investigation more quantitative, hence we take this occasion to support this claim with evidence for sufficiently large or small balls in Appendix \ref{app-wig}.

In the same paper we also have $S_{\mathrm{BJ}}^{(1)} \in \cBwtn$ for the one-dimensional Born--Jordan window. On the other hand, the results in \cite{Stra-Svela-Trapasso-2026} about the higher-dimensional Born--Jordan window \(S_{\mathrm{BJ}}^{(d)}\) fall beyond the bounded operator setting, since for \(S\in\cB\) and \(1\le p<\infty\) one always has \(\Lambda_{p,\Omega}(S)< \infty\). On the other hand, if one
enlarges the class of admissible windows to distributional Cohen-class kernels, then a fourth alternative appears:
\[  \cBubd \coloneqq \{S \in \mathfrak{S}':\Lambda_{p,\Omega}(S)=+\infty\}.\]
Therefore, the classification depends on the critical exponent 
\[p_*(d)=\begin{cases}
    \infty & (d=1,2) \\ \dfrac{2d}{d-2} & (d\ge 3),  \end{cases}\]
    and reads as follows: 
\[ S_{\mathrm{BJ}}^{(d)} \begin{cases} \in \cBwtn & \text{ for all } d \text{ and } 1\le p<p_*(d) \\  \in \cBubd & \text{ for } d\ge 3 \text{ and } p>p_*(d). \end{cases}\]
Let us conclude this discussion with some comments concerning the endpoint case $p=\infty$, not covered in the previous finite-$p$ analysis. The results from Section \ref{sec-linfty} can be summarized here by introducing 
\[ \cB_{\mathrm{sup}}(\infty,\Omega) = \left\{S \in \cB:\text{ there is }f\in L^2(\mathbb R^d) \text{ with } \|f\|_{L^2}=1 \text{ such that } \|Q_Sf\|_{L^\infty(\Omega)}=w(S)\right\}. \]
We have seen that $\cB_{\mathrm{sup}}(\infty,\Omega) = \NRA$ in the case where $\Omega$ is bounded. In general, for unbounded finite-measure $\Omega$ the endpoint class can strictly contain $\NRA$, and the classification for $p=\infty$ is
\[ \cB=\NRA\ \sqcup\ \bigl(\cB_{\mathrm{sup}}(\infty,\Omega)\setminus\NRA\bigr)\ \sqcup\ \bigl(\cB\setminus\cB_{\mathrm{sup}}(\infty,\Omega)\bigr).
\]
If one enlarges the admissible windows to distributional Cohen-class kernels, one may additionally define
\[\cB_{\mathrm{ubd}}(\infty,\Omega)\coloneqq\{S\in\mathfrak S':\Lambda_{\infty,\Omega}(S)=+\infty\}.\]
In particular, the critical Born--Jordan endpoint in dimension $d=2$ satisfies $S_{\mathrm{BJ}}^{(2)}\in \cB_{\mathrm{ubd}}(\infty,\Omega)$.

\subsection{The compactness barrier}
As already discussed, the Born--Jordan yields strictness of the inclusion $\cK \subsetneq \cBwc$, but leaves open the gap between $\cBpt$ and $\cBwc$. This problem is actually solved in light of the following noteworthy characterization. 

\begin{prop}
    $\cBpt = \cK$. 
\end{prop}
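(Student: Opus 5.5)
The inclusion $\cK\subseteq\cBpt$ has already been established in the proof of Theorem~\ref{CompactExistence}, so it remains to show that $\cBpt\subseteq\cK$. Fix $S\in\cBpt$ and test the defining property only at the origin $z=0$. There $Q_Sf(0)=\langle \alpha_0(S)f,f\rangle=\langle Sf,f\rangle$. Hence membership in $\cBpt$ implies that the quadratic form $q(f)=\langle Sf,f\rangle$ is sequentially weakly continuous on $L^2(\rd)$. The proof therefore reduces to a purely Hilbert-space fact: a bounded operator whose quadratic form is sequentially weakly continuous must be compact.

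The first step is to pass from the quadratic form to the sesquilinear form. Let $f_n\rightharpoonup 0$ and fix $g\in L^2(\rd)$. By polarization,
\[4\langle Sf_n,g\rangle=\sum_{k=0}^{3} i^k\, q(f_n+i^k g),\]
and each $f_n+i^kg\rightharpoonup i^kg$. Weak continuity of $q$ then gives $\langle Sf_n,g\rangle\to \langle Sg,g\rangle\sum_k i^k\cdot(\ldots)$, which is just the trivial weak convergence $Sf_n\rightharpoonup0$ and yields nothing new. The mixed-term identity is more useful. For a single weakly null sequence $f_n$, apply weak continuity to $q(f_n)$ with limit $q(0)=0$. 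This gives $\langle Sf_n,f_n\rangle\to0$ for every weakly null $f_n$, and likewise $\langle S(f_n+h_n),f_n+h_n\rangle\to0$ for any two weakly null sequences. Polarizing in the pair $(f_n,h_n)$ with the phases $i^k$ shows that $\langle Sf_n,h_n\rangle\to0$ for all weakly null sequences $f_n,h_n$.

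The key step, and the main obstacle, is to extract compactness from this. Suppose $S$ is not compact. Then there is a bounded weakly null sequence $f_n$ with $\|Sf_n\|\ge\delta>0$: take a bounded sequence without a norm-convergent image, pass to a weakly convergent subsequence, and subtract the weak limit. Set $h_n=Sf_n/\|Sf_n\|$. I need $h_n\rightharpoonup0$, which holds because $Sf_n\rightharpoonup0$ by boundedness of $S$ and $\|Sf_n\|\ge\delta$ (renormalizing by norms in $[\delta,\|S\|\sup_n\|f_n\|]$ preserves weak nullity). Then $\langle Sf_n,h_n\rangle=\|Sf_n\|\ge\delta$ does not tend to $0$, contradicting the previous step. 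Hence $S\in\cK$.

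The point needing care is that the definition of $\cBpt$ allows arbitrary weakly convergent sequences, not necessarily normalized ones, so testing with $f_n+i^kh_n$ is legitimate. It is also worth remarking that only $z=0$ was used. Covariance shows that this suffices anyway, since $\alpha_z(S)$ is compact if and only if $S$ is.
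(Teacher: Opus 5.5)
Your proof is correct and follows the paper's route: evaluate the pointwise condition at $z=0$ to get $\langle Sf_n,f_n\rangle\to0$ for every weakly null sequence, then polarize to conclude compactness. You also supply the polarization step that the paper only asserts (two weakly null sequences, then $h_n=Sf_n/\|Sf_n\|$; taking $h_n=Sf_n$ directly would also work). The garbled aside about $f_n+i^kg$ in your second paragraph contributes nothing and can simply be deleted.
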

\begin{proof}
In view of Theorem \ref{CompactExistence}, we need to prove $\cBpt \subset \cK$ only. It is well known that \(S\in\cK\) if and only if \(f_n\rightharpoonup0 \implies \|Sf_n\|\to0\). By polarization, this is equivalent to \(\langle Sf_n,f_n\rangle\to0\) for every weakly null sequence $f_n$. Now, if $S \in \cBpt$ and \(f_n\rightharpoonup0\), by the definition of \(\cBpt\) applied at \(z=0\) we have
\[\langle Sf_n,f_n\rangle=Q_Sf_n(0)\to Q_S0(0)=0,\]
that is the claim. 

\end{proof}

From this result we deduce that noncompact examples with weakly continuous concentration functional cannot be detected pointwise, hence they must be genuinely $L^p(\Omega)$ based. Before further exploring this direction, let us point out that the compactness threshold in $\cBwc$ is sharp at least when one asks some additional regularity. Recall from \cite{Luef-Skrettingland-2021} the class of uniformly continuous operators with respect to shift action: 
\[ \Cu \coloneqq \{S \in \cB : \rdd \ni z \mapsto \alpha_z(S) \in \cB \text{ is continuous}\}. \]

\begin{prop}
    With the notation introduced above, we have $\cBwc \cap \Cu= \cK$. 
\end{prop}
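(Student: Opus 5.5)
The inclusion $\cK\subseteq\cBwc\cap\Cu$ should be immediate from what is already available. Theorem~\ref{CompactExistence} gives $\cK\subseteq\cBwc$. For $\cK\subseteq\Cu$, I would use the standard argument (also in \cite{Luef-Skrettingland-2021}). For a rank-one operator $g\otimes h$ we have $\alpha_z(g\otimes h)=\pi(z)g\otimes\pi(z)h$. Strong continuity of $z\mapsto\pi(z)$ then gives norm continuity of $z\mapsto\alpha_z(g\otimes h)$ in $\cB$. The same holds for finite-rank operators by linearity. Finally, since $\|\alpha_z(S)-\alpha_z(T)\|_{\cB}=\|S-T\|_{\cB}$, a uniform $\varepsilon/3$ argument passes norm continuity to operator-norm limits, hence to all of $\cK$.

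For the converse, fix $S\in\cBwc\cap\Cu$. I would use the compactness criterion already used in the proof that $\cBpt=\cK$: it suffices to show that $\langle Sf_n,f_n\rangle\to0$ for every normalized weakly null sequence $(f_n)$. The point is to convert the $L^p(\Omega)$ information given by Proposition~\ref{prop:wtn}, namely $\Lambda^{\mathrm{ess}}_{p,\Omega}(S)=0$, into pointwise information at a single phase-space point. Uniform continuity of the shift action is exactly the ingredient that allows this, uniformly in $n$.

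Concretely, I fix $\varepsilon>0$ and use $S\in\Cu$ to pick $\delta>0$ such that $\|\alpha_w(S)-S\|_{\cB}<\varepsilon$ whenever $|w|<\delta$. Since $\Omega$ need not contain a neighbourhood of the origin, I choose a Lebesgue density point $z_0$ of $\Omega$ and set $h_n\coloneqq\pi(z_0)f_n$. Then $\|h_n\|_{L^2}=1$ and $h_n\rightharpoonup0$. By covariance,
\[ Q_Sh_n(z)=\langle\alpha_{z-z_0}(S)f_n,f_n\rangle , \]
which is the shifted quadratic form evaluated at $f_n$. Hence for $z\in\Omega\cap B(z_0,\delta)$, a set of positive measure $m$, we obtain
\[ |Q_Sh_n(z)|\ge|\langle Sf_n,f_n\rangle|-\varepsilon, \]
with $\varepsilon$ independent of $n$. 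Integrating over this set gives
\[ \|Q_Sh_n\|_{L^p(\Omega)}\ge m^{1/p}\bigl(|\langle Sf_n,f_n\rangle|-\varepsilon\bigr)_+ . \]
The left-hand side tends to $0$ because $\Lambda^{\mathrm{ess}}_{p,\Omega}(S)=0$. Therefore $\limsup_n|\langle Sf_n,f_n\rangle|\le\varepsilon$, and since $\varepsilon>0$ was arbitrary, $S\in\cK$.

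The main obstacle is this uniform-in-$n$ passage from the integrated smallness of $Q_S h_n$ to smallness of $Q_S h_n$ at the single point $z_0$. Without $\Cu$, the functions $Q_Sf_n$ can oscillate faster and faster, as in the Born--Jordan mechanism, and the pointwise value can stay large while the local $L^p$ norm vanishes. The remaining steps, namely covariance, the choice of a density point, and the polarization criterion for compactness, are routine.
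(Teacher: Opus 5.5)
Your proof is correct, but for the inclusion $\cBwc\cap\Cu\subseteq\cK$ it takes a different and more elementary route than the paper. Your treatment of $\cK\subseteq\cBwc\cap\Cu$ matches the paper's.

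The paper argues in two steps:
\begin{enumerate}
\item It uses weak continuity only along the coherent-state sequence $\pi(z_n)\varphi_0$ with $|z_n|\to\infty$. Together with uniform continuity of $F=Q_S\varphi_0$ (which comes from $S\in\Cu$), this shows $F\in C_0(\rdd)$.
\item It then invokes the QHA Pitt compactness theorem of Luef--Skrettingland. This is a Tauberian-type criterion that needs a trace-class window with nowhere-vanishing Fourier--Wigner transform, here the Gaussian $\varphi_0\otimes\varphi_0$.
\end{enumerate}

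You bypass Pitt's theorem entirely. Norm continuity of $w\mapsto\alpha_w(S)$ at $w=0$, combined with covariance, lets you replace $Q_S(\pi(z_0)f_n)$ on $\Omega\cap B(z_0,\delta)$ by the single number $\langle Sf_n,f_n\rangle$. The error $\varepsilon$ is uniform in $n$, and this is the key point. Weak continuity then forces $\langle Sf_n,f_n\rangle\to0$ for every normalized weakly null sequence, and the quadratic-form compactness criterion from the proof of $\cBpt=\cK$ finishes the argument. In effect, on $\Cu$ you localize the $L^p(\Omega)$ information to a single point and reduce to the pointwise case.

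What your approach gains is self-containedness and generality. It uses only covariance, a point of positive local density of $\Omega$, and continuity of the shift action at the identity. It therefore carries over, with the obvious adaptations ($\alpha_{\xi_0^{-1}\xi}$ in place of $\alpha_{z-z_0}$), to the representation-theoretic setting of Appendix~\ref{app-eucl}, with no Tauberian input.

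What the paper's approach gains is a stronger conclusion from a weaker hypothesis. For $S\in\Cu$ it only needs $\|Q_S(\pi(z_n)\varphi_0)\|_{L^p(\Omega)}\to0$ along shifted Gaussians, rather than $\Lambda^{\mathrm{ess}}_{p,\Omega}(S)=0$ over all weakly null sequences. Along the way it also exhibits the intermediate structural fact $Q_S\varphi_0\in C_0(\rdd)$.
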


\begin{proof}
We first note that \( \cK\subseteq \cBwc\cap \Cu \). Indeed, $\cK \subset \cBwc$ by Theorem~\ref{CompactExistence}, and finite-rank operators belong to \(\Cu\) since \(z\mapsto \pi(z)u\)
is norm-continuous for every \(u\in L^2(\rd)\). By density, we have $\cK \subset \Cu$ as well. The non-trivial inclusion is thus the opposite one, and we prove now that if \( S\in\cBwc\cap\Cu\) then \(S\in\cK\). 

\bigskip

\noindent \textbf{Step 1.} \emph{Vanishing at infinity of $Q_S\varphi_0$.} 
Let \(\varphi_0\) be the usual normalized Gaussian, then set \( A_0 \coloneqq \varphi_0\otimes\varphi_0\) and \[ F(z) \coloneqq Q_S\varphi_0(z) = (A_0\star\widecheck S)(z) = \langle \alpha_z(S)\varphi_0,\varphi_0\rangle. \]
Since \(S\in\Cu\) we have that \(F\) is a uniformly continuous function on \(\rdd\). We claim that \(F\) vanishes at infinity, that is \(F\in C_0(\rdd)\). Indeed, given any sequence \((z_n)\subset\rdd\) satisfying \(|z_n|\to\infty\), we have \( g_n \coloneqq \pi(z_n)\varphi_0\rightharpoonup0 \) in $L^2(\rd)$. Since \(S\in\cBwc\), weak continuity 
gives \( \|Q_Sg_n\|_{L^p(\Omega)}\to0 \), and by covariance of \(Q_S\) we also have
\[ |Q_Sg_n(z)| = |Q_S\varphi_0(z-z_n)| = |F(z-z_n)|. \]
As a result, we infer \( \|F\|_{L^p(\Omega-z_n)}\to0\) as \(|z_n|\to\infty\). 

Suppose, for the sake of contradiction, that \(F\notin C_0(\rdd)\). Since \(F\) is uniformly continuous, there exist \(\eta>0\), a sequence \(w_n\to\infty\), and a radius \(r>0\) such that \( |F(z)|\ge \eta/2\) for every \(z\in B(w_n,r)\) and \(n\). If \(\zeta_0\) is a density point of \(\Omega\), setting \( z_n \coloneqq \zeta_0-w_n\) yields \(|z_n|\to\infty\) and 
\[ \|F\|_{L^p(\Omega-z_n)}^p \ge \int_{(\Omega-z_n)\cap B(w_n,r)} |F(z)|^p \dd{z} \ge (\eta/2)^p |\Omega\cap B(\zeta_0,r)|>0, \] where we used that \(|(\Omega-z_n)\cap B(w_n,r)| = |\Omega\cap B(\zeta_0,r)|>0\). This bound contradicts \(\|F\|_{L^p(\Omega-z_n)}\to0\), proving that \(F=Q_S\varphi_0\in C_0(\rdd)\) as claimed. 

\bigskip 

\noindent \textbf{Step 2.} \emph{Pitt compactness.}
Let us note that, since \(A_0=\widecheck A_0\) and
\[
A_0 \star \widecheck S (z)=\tr (A_0\alpha_z(S)) = \tr(S\alpha_{-z}(A_0)) = S \star A_0(-z), \qquad z \in \rdd,
\]
the following equivalence holds:
\[
A_0\star\widecheck S\in C_0(\rdd) \iff S\star A_0\in C_0(\rdd).
\]
We can finally invoke the QHA Pitt compactness criterion, cf.~\cite[Theorem 5.2]{Luef-Skrettingland-2021}: \(R\in\cK\) if and only if \(R\in\Cu\) and \( R\star T\in C_0(\rdd)\) for some trace-class operator \(T \in \cS^1\) with $\cF_W(T)(z) \ne 0$ for all $z \in \rdd$. Applying this with \(R=S\) and \(T=A_0\), since $\cF_W(A_0)(z)=e^{-\pi |z|^2/2}$, gives the desired conclusion \(S\in\cK\).
\end{proof}

\subsection{Beyond the compactness barrier}
The last two propositions isolate a true compactness barrier from two complementary angles: Any noncompact weakly continuous window must be singular from the QHA point of view, namely invisible to the pointwise topology and lie outside the uniformly continuous operator class. 

In this connection, it is worth highlighting that the Born--Jordan ray is an explicit noncompact subset of $\cBwc \setminus \Cu$. As such, the weak continuity mechanism in this case is inherently different from the one encoded by the regularity \(\Cu\), which ultimately accounts for compactness --- hence pointwise convergence.

Some questions naturally arise at this point. First, what is the structural nature behind the weak continuity of the Born--Jordan concentration functional? Is the latter shared by other operators? More importantly, are there different mechanisms? In this section we explore these problems. 

\subsubsection{The squeeze representation of the Born--Jordan window.} 
Assume \(d=1\) and set \(D_sf(x)\coloneqq e^{s/2}f(e^sx)\) for \(s \in \R\). After setting  
\[\tau=\tau(s)\coloneqq \frac{e^s}{1+e^s},\] a straightforward computation shows that the \(\tau\)-Wigner window can be written as \(S_\tau=2\cosh(s/2)D_sP\), since 
\[\langle D_sPf,f\rangle=e^{s/2}\int_{\mathbb R}f(-e^sx)\overline{f(x)} \dd{x}=\sqrt{\tau(1-\tau)}\int_{\mathbb R}f(\tau t)\overline{f(-(1-\tau)t)} \dd{t}.\] 
Averaging over \(\tau\in(0,1)\) then gives a representation of the Born--Jordan window: 
\[S_{\mathrm{BJ}}=\int_0^1S_\tau \dd{\tau}=\int_{\mathbb R}\frac{1}{2\cosh(s/2)}D_sP \dd{s}. \] 
An interesting perspective is offered here in terms of functional calculus: If \(K \) is the self-adjoint generator of the squeeze group $D_s=e^{-isK}$, since the latter commutes with \(P\) we can write
\[ S_{\mathrm{BJ}}=m_{\mathrm{BJ}}(K)P,\qquad m_{\mathrm{BJ}}(\lambda)\coloneqq\int_{\mathbb R}\frac{1}{2\cosh(s/2)}e^{-is\lambda} \dd{s} = \frac{\pi}{\cosh(\pi\lambda)}. \] 
The previous computation, although elementary, sheds some light behind the mechanism leading to weak continuity of the concentration functional. Indeed, even if \(2P \notin \cBwc\) (see \cite{Stra-Svela-Trapasso-2025}), in the Born--Jordan window this factor is smoothed by the multiplier \(m_{\mathrm{BJ}}(K)\) resulting from averaging over the squeeze group. This accounts for cancellation effects that are enough to make $S_{\mathrm{BJ}}$ fall into $\cBwc$ but not to produce a compact operator, as illustrated by the following general result --- whose assumptions are satisfied by the multiplier $m_{\mathrm{BJ}}$. 
\begin{lemma}\label{lem:noncomp}
Let \(K\) be the generator of the dilation group $D_s$ on \(L^2(\rd)\). For every unitary operator $U$ and \(m\in C(\mathbb R)\cap L^\infty(\mathbb R)\) not identically zero, the averaged multiplier \(m(K)U\) is not compact.
\end{lemma}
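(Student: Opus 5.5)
Since $U$ is unitary, $m(K)U$ is compact if and only if $m(K)=(m(K)U)U^*$ is compact, because the compact operators form an ideal in $\cB$. So it suffices to show that $m(K)$ is not compact whenever $m$ is continuous, bounded and not identically zero.

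The plan is to use the spectral theory of the generator $K$ of the dilation group. $K$ has purely absolutely continuous spectrum equal to all of $\R$. This is most easily seen via the Mellin transform, which in dimension $d$ means passing to polar coordinates $x=r\omega$ and setting $r=e^t$. This gives the unitary equivalence
\[ L^2(\rd)\cong L^2(\R_t; L^2(S^{d-1})), \]
with $D_s$ acting as translation in $t$. The spectral representation of $K$ is therefore multiplication by the variable $\lambda$ on $L^2(\R_\lambda; L^2(S^{d-1}))$, after a Fourier transform in $t$. Under this identification $m(K)$ becomes multiplication by $m(\lambda)\otimes\mathrm{Id}_{L^2(S^{d-1})}$.

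Since $m$ is continuous and not identically zero, there exist $\lambda_0$, $\delta>0$ and $c>0$ with $|m(\lambda)|\ge c$ on $I=[\lambda_0-\delta,\lambda_0+\delta]$. Let $E$ be the spectral projection $\chi_I(K)$. Its range is $L^2(I;L^2(S^{d-1}))$, which is infinite dimensional; in fact, absolute continuity of the spectrum already makes every spectral projection of a nondegenerate interval infinite-rank.

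On $\operatorname{ran}E$ the operator $m(K)$ satisfies $\|m(K)f\|\ge c\|f\|$. Now take an orthonormal sequence $f_n\in\operatorname{ran}E$, so that $f_n\rightharpoonup0$, while $\|m(K)f_n\|\ge c$ for all $n$. This contradicts the characterization of compactness, namely that compact operators map weakly null sequences to norm-null ones.

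The only step needing care is the infinite-dimensionality of $\operatorname{ran}\chi_I(K)$, i.e.\ that the spectrum of $K$ is purely continuous. For this I would rely on the Mellin/polar-coordinate unitary equivalence above. For $d=1$ one should also remember the two half-lines $x>0$ and $x<0$, which simply give multiplicity two. Alternatively, a direct argument avoids the spectral representation: $K$ has no eigenvalues, since $D_sf=e^{-is\lambda}f$ for all $s$ would force $|f(e^s x)|e^{s/2}=|f(x)|$, which is incompatible with $f\in L^2\setminus\{0\}$. Hence each nonzero spectral projection of an interval meeting the spectrum, which is all of $\R$, has infinite rank, because a finite-rank spectral projection would produce eigenvectors.
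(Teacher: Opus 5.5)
Your proof is correct and follows essentially the same route as the paper. You reduce to $m(K)$ via the ideal property, pick an interval where $|m|\ge c$, use that $\chi_I(K)$ has infinite rank, and contradict compactness with an orthonormal (hence weakly null) sequence in its range; the paper phrases this last step as $(m(K)e_n)$ having no Cauchy subsequence. Your extra justification of infinite rank (via the Mellin picture or the absence of eigenvalues) fills in what the paper takes directly from $\sigma(K)=\R$. One small correction: in dimension $d$ the eigenvalue relation reads $e^{sd/2}|f(e^sx)|=|f(x)|$, not $e^{s/2}|f(e^sx)|=|f(x)|$.
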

\begin{proof}
The crucial mechanism here is that, by the spectral theorem, since \(\sigma(K)=\R\) then the spectral projection \(E_K(I)\) has infinite rank for every nonempty open interval \(I\subset\R\). The assumptions on $m$ then imply that there exist \(\lambda_0\in\R\) and \(c_0>0\) such that \(|m(\lambda_0)|>2c_0\), and also a nonempty open interval \(I\subset\R\) such that \(\lambda_0 \in I\) and \(|m(\lambda)|\ge c_0\) for every $\lambda\in I$. Since \(E_K(I)\) has infinite rank, there is an orthonormal sequence \((e_n)_{n=1}^\infty\subset \operatorname{Ran}E_K(I)\) such that for \(n\ne k\) the vector \(e_n-e_k\) also belongs to \(\operatorname{Ran}E_K(I)\), and therefore its spectral measure is supported in \(I\). We can now resort to the spectral theorem to obtain
\[\|m(K)(e_n-e_k)\|_{L^2}^2=\int_I |m(\lambda)|^2 \dd{\mu_{e_n-e_k}(\lambda)}\ge c_0^2\|e_n-e_k\|_{L^2}^2=2c_0^2.\]
As a consequence we have that \((m(K)e_n)_{n=1}^\infty\) has no Cauchy subsequence, but since \((e_n)\) is bounded the conclusion is impossible if \(m(K) \in \cK\), therefore \(m(K)\) is not compact as claimed. To conclude, if \(m(K)U\) were compact then \[(m(K)U)U^*=m(K)\] would be compact as well, a contradiction. 
\end{proof}

More generally, for any $h \in L^1(\R)\smo$ one is led to consider 
\[S_h\coloneqq\int_{\mathbb R}h(s)D_sP \dd{s}=m_h(K)P,\qquad m_h(\lambda)\coloneqq\int_{\mathbb R}h(s)e^{-is\lambda} \dd{s},\] as a natural family of noncompact windows for which weak continuity of the concentration functional may still hold under suitable assumptions on the multiplier \(m_h\), or equivalently on the averaging kernel \(h\). To determine such conditions, it is wise to revisit the argument in \cite{Stra-Svela-Trapasso-2025} for the Born--Jordan kernel. A closer inspection actually reveals that the same proof extends, after suitable adaptations, to more general averages of the $\tau$-Wigner distributions: 
\begin{prop}
  Let $\mu$ be a finite complex Borel measure on $(0,1)$ such that $\abs{\mu}$ has no atoms and 
  
  \[ M_\mu\coloneqq \int_0^1\frac{1}{\sqrt{\tau(1-\tau)}} \dd{|\mu|(\tau)}<\infty. \]
For $f,g \in L^2(\R)$ define the corresponding (Bochner) average
\[ W_\mu(f,g)\coloneqq\int_0^1W_\tau(f,g) \dd{\mu(\tau)}, \qquad W_\mu f\coloneqq W_\mu(f,f). \]
Then, for every measurable $\Omega\subset\mathbb R^2$ with $0<|\Omega|<\infty$ and every $1\le p<\infty$, the correspondence $f \mapsto W_\mu f$ is sequentially weak-to-norm continuous  $L^2(\R) \to L^p(\Omega)$.
\end{prop}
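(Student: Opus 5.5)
The plan is to run everything through Proposition~\ref{prop:wtn}. Write $S_\tau$ for the $\tau$-Wigner window of Lemma~\ref{Calculations}, and let $S_\mu\coloneqq\int_0^1 S_\tau\dd{\mu(\tau)}$, so that $W_\mu f=Q_{\widecheck{S_\mu}}f$ up to the reflection convention. In dimension one the squeeze representation gives $\|S_\tau\|_{\cB}=2\cosh(s/2)=1/\sqrt{\tau(1-\tau)}$. Hence $\|S_\mu\|_{\cB}\le M_\mu$, the window is bounded, and for $\|f\|_{L^2}\le 1$ we get the pointwise bound $|W_\mu f(z)|\le M_\mu$. By Lemma~\ref{lem-mixed} and the splitting $f_n=f+r_n$, it suffices to show that $\|W_\mu r_n\|_{L^p(\Omega)}\to0$ for every weakly null sequence $(r_n)$. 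Because of the uniform $L^\infty$ bound, it is enough to prove this for $p=2$: interpolating with $L^\infty$ covers $p>2$, and $|\Omega|<\infty$ covers $p<2$.

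The $L^2$ statement will be recast as a compactness statement. View $W_\tau(f,g)$ as a linear map $T_\tau$ acting on the kernel $F=f\otimes\bar g\in L^2(\R^2)$, namely $T_\tau F(x,\xi)=\int e^{-2\pi i\xi y}F(x+\tau y,x-(1-\tau)y)\dd{y}$. This is a linear change of variables with Jacobian $1$ followed by a partial Fourier transform, so $T_\tau$ is unitary on $L^2(\R^2)$; this is Moyal's identity $\|W_\tau(f,g)\|_{L^2}=\|f\|_{L^2}\|g\|_{L^2}$. Then $W_\mu r_n=T_\mu(r_n\otimes\bar r_n)$ with $T_\mu=\int T_\tau\dd{\mu(\tau)}$, a strongly measurable Bochner average with $\|T_\mu\|\le|\mu|(0,1)$. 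Since $r_n\rightharpoonup0$, we have $r_n\otimes\bar r_n\rightharpoonup0$ in $L^2(\R^2)$ (test against elementary tensors and use boundedness). It therefore suffices to show that $\chi_R T_\mu$ is compact for bounded rectangles $R$.

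The passage from $\Omega$ to a bounded $R$ costs little because we only evaluate on rank-one inputs. By the pointwise bound, $\|W_\mu r_n\|_{L^2(\Omega\setminus R)}\le M_\mu\sup_n\|r_n\|_{L^2}^2\,|\Omega\setminus R|^{1/2}$, and this can be made uniformly small in $n$ by choosing $R$ suitably.

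The main obstacle is the compactness of $\chi_R T_\mu$, which holds despite each $\chi_R T_\tau$ being noncompact. I would prove that $(\chi_R T_\mu)^*(\chi_R T_\mu)=\iint T_\tau^*\chi_R T_\sigma\dd{\mu(\sigma)}\dd{\overline{\mu}(\tau)}$ is compact. The integrand is uniformly bounded in operator norm. The diagonal $\{\tau=\sigma\}$ is $|\mu|\otimes|\mu|$-null because $|\mu|$ has no atoms, so it contributes nothing. One then has to check two things: that for $\tau\neq\sigma$ the operator $T_\tau^*\chi_R T_\sigma$ is compact, and that the double integral of compact operators is a norm limit of compact operators. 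For the second point I would first restrict to $|\tau-\sigma|\ge\delta$ and to $\tau,\sigma\in[\delta,1-\delta]$; the remaining pieces have small norm by non-atomicity and dominated convergence, and on the restricted set the integrand should be norm-continuous. For the first point I would compute the kernel of $T_\tau^*\chi_R T_\sigma$ explicitly. It is a composition of two distinct linear coordinate changes sandwiching a partial Fourier transform and multiplication by $\chi_R$. When $\tau\neq\sigma$ the two shears do not cancel, and I expect to get an integral operator whose kernel is $\chi_R$ convolved with a nondegenerate chirp in one variable times an $\widehat{\chi}$-factor in the other. I would try to show that it is Hilbert--Schmidt after cutting off $F$ to a large ball, with a tail controlled by $\chi_R$ localizing $x$ and the Fourier factor localizing $\xi$. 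If this direct approach is too messy, the fallback is to use the squeeze form $S_\tau=2\cosh(s/2)D_sP$ and show that $D_{s}^{*}H_{R}D_{s'}$-type cross terms are compact for $s\neq s'$, mimicking the Born--Jordan argument of \cite{Stra-Svela-Trapasso-2025}.
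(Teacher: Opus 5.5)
Your reductions are sound. The bound $\|S_\tau\|_{\mathcal B}=1/\sqrt{\tau(1-\tau)}$ gives the uniform estimate $|W_\mu f|\le M_\mu\|f\|_{L^2}^2$. Lemma~\ref{lem-mixed} reduces everything to weakly null $r_n$, and the $L^\infty$ bound reduces to $p=2$ and to a bounded rectangle $R$. The map $T_\tau$ (kernel $\mapsto$ $\tau$-symbol) is unitary on $L^2(\R^2)$, and $r_n\otimes\overline{r_n}\rightharpoonup 0$. Your target, compactness of $A\coloneqq\chi_R T_\mu$, is in fact true.

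The gap is in how you propose to prove it. Your key lemma, that $T_\tau^*\chi_R T_\sigma$ is compact for $\tau\neq\sigma$, is false. Since $T_\sigma$ is unitary, $B=T_\tau^*\chi_RT_\sigma$ satisfies $BB^*=T_\tau^*\chi_RT_\tau$, which is an orthogonal projection of infinite rank. So every nonzero singular value of $B$ equals $1$.

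Concretely, fix a Gaussian $\phi$ and set $F_L(u,v)=\phi(u-\sigma L)\overline{\phi(v+(1-\sigma)L)}$. One computes $T_\sigma F_L(x,\xi)=e^{-2\pi i \xi L}W_\sigma\phi(x,\xi)$. Thus inputs escaping to infinity produce $\sigma$-cross terms sitting in $R$ with fixed mass, so no cut-off of $F$ to a large ball with a small tail is possible.

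The claimed norm continuity of the integrand also fails. The map $\tau\mapsto T_\tau$ is only strongly continuous, since different $\tau$-symbols differ by chirp multipliers on the Fourier side. In $A^*A$, compactness arises only from cancellation in the $\mu$-integration and cannot be obtained term by term. Your squeeze-group fallback is too unspecified to fill this gap.

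The repair is to work with $AA^*=\iint\chi_RT_\sigma T_\tau^*\chi_R\,\mathrm{d}\mu(\sigma)\,\mathrm{d}\overline{\mu}(\tau)$ instead.
\begin{itemize}
\item One checks that $\widehat{T_\tau F}(\eta,w)=e^{-2\pi i\tau\eta w}\int F(u,u+w)e^{-2\pi i u\eta}\,\mathrm{d}u$. Hence $T_\sigma T_\tau^*$ is convolution with the chirp $k_\lambda(x,\xi)=|\lambda|^{-1}e^{2\pi i x\xi/\lambda}$, where $\lambda=\sigma-\tau$.
\item Therefore $\chi_RT_\sigma T_\tau^*\chi_R$ has kernel $\chi_R(z)k_\lambda(z-z')\chi_R(z')$. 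It is Hilbert--Schmidt with norm $|R|/|\sigma-\tau|$ and depends continuously on $(\sigma,\tau)$ in $\mathcal S^2$ off the diagonal, by dominated convergence.
\item The part $|\sigma-\tau|\ge\delta$ is then a Bochner integral in $\mathcal S^2$, hence compact.
\item The part $|\sigma-\tau|<\delta$ has operator norm at most $(|\mu|\otimes|\mu|)(\{|\sigma-\tau|<\delta\})$, which tends to $0$ because $|\mu|$ has no atoms.
\end{itemize}
So $AA^*$ is compact, hence $A$ is compact, and your argument closes.

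Repaired this way, your proof genuinely differs from the paper's. The paper only indicates that the Born--Jordan concentration-compactness argument of \cite{Stra-Svela-Trapasso-2025} adapts, with non-atomicity killing the averaged cross terms between escaping profiles. Your route avoids profile decompositions and uses $M_\mu<\infty$ only for the $L^\infty$ bound. It also yields the stronger fact that $\chi_RT_\mu$ is compact on all of $L^2(\R^2)$, not just on rank-one inputs.
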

To connect this statement with operator windows, define
\[ S_\mu\coloneqq\int_0^1S_\tau \dd{\mu(\tau)} \]
as a weak operator integral. Since \(W_\tau(f,g)=Q_{S_\tau}(f,g)\), by linearity we infer 
\[ W_\mu(f,g)=\int_0^1W_\tau(f,g) \dd{\mu(\tau)}=Q_{S_\mu}(f,g), \]
hence the previous result is precisely giving \(S_\mu\in\cBwc\). As an immediate corollary, we have that \textit{every} kernel $h \in L^1(\R)\smo$ produces a noncompact operator $S_h \in \cBwc$. Indeed, recalling that \(\tau=\dfrac{e^s}{1+e^s}\) and \(S_\tau=2\cosh(s/2)D_sP\), we have
\[ S_h=\int_{\R} h(s)D_sP \dd{s} = \int_0^1a_h(\tau)S_\tau \dd{\tau}, \qquad a_h(\tau)=\frac{h(\log(\tau/(1-\tau)))}{\sqrt{\tau(1-\tau)}}. \] 
Setting $\dd{\mu_h(\tau)} = a_h(\tau) \dd{\tau}$ then yields 
\[ \int_0^1\frac{1}{\sqrt{\tau(1-\tau)}} \dd{|\mu_h|(\tau)} = \int_{\mathbb R}|h(s)| \dd{s} < \infty.\]

\subsubsection{Beyond parity masking} We conclude our exploration of $\cBwc$ by presenting an alternative mechanism to the one associated with the Born--Jordan concentration problem. The construction ultimately leverages Proposition \ref{prop:PositiveLocalizedAverage}, where we showed that, for positive operators, a local form of compactness (i.e., that of the mixed-state localization operator $H_{\Omega,S} = \chi_\Omega\star S=\int_\Omega \alpha_z(S)\dd{z}
$) suffices to ensure that $S \in \cBwtn$. This criterion is automatic in the case where $S \in \cK$, but it is not at all clear if noncompact positive operators with compact $H_{\Omega,S}$ do actually exist. Having in mind the previous section, where weak continuity is obtained by averaging parity, the main thrust here is to start with a noncompact operator whose localized averages become compact. The next result provides sufficient conditions for an explicit model. 

\begin{prop}\label{prop:PositiveDiagonalSeries}
Given an orthonormal sequence \((u_k)_{k\ge1}\) in \(L^2(\rd)\) and numbers \(0\le\lambda_k\le M\) for some fixed $M>0$, define (in the strong operator sense) 
\[S\coloneqq\sum_{k=1}^\infty\lambda_kP_k, \qquad P_k\coloneqq u_k\otimes u_k.\] 
Let $\Omega \subset \rdd$ be such that $0<|\Omega|<\infty$ and assume that 
\[ \sum_{k=1}^\infty\lambda_k\|H_{\Omega,k}\|_{\mathcal B}<\infty, \qquad H_{\Omega,k}\coloneqq\chi_\Omega\star P_k.\]
Then the map $f \mapsto Q_S f$ is weak-to-norm continuous $L^2(\rd) \to L^p(\Omega)$ for every $1 \le p < \infty$, hence \(S\in\cBwc\). Moreover, if in addition \(\lambda_k\not\to0\) then \(S\notin\cK\).
\end{prop}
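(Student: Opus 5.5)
The plan is to reduce to Proposition~\ref{prop:PositiveLocalizedAverage}. Since \(S\ge0\) (each \(\lambda_kP_k\ge0\) and the series converges strongly, with \(\|S\|_{\cB}\le M\) by orthonormality), it suffices to show that \(H_{\Omega,S}=\chi_\Omega\star S\) is compact. First I will justify the interchange
\[ H_{\Omega,S}=\sum_{k=1}^\infty \lambda_k H_{\Omega,k}. \]
For \(u\in L^2(\rd)\), positivity gives \(\langle \alpha_z(S)u,u\rangle=\sum_k\lambda_k|\langle u,\pi(z)u_k\rangle|^2\). This is a series of nonnegative terms, so monotone convergence lets us integrate over \(\Omega\) term by term:
\[ \langle H_{\Omega,S}u,u\rangle=\sum_k\lambda_k\langle H_{\Omega,k}u,u\rangle. \]
Polarization then gives the weak identity.

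Next, each \(H_{\Omega,k}=\chi_\Omega\star P_k\) is trace class by Werner's second Young inequality, since \(\chi_\Omega\in L^1\) and \(P_k\in\S^1\). In particular each \(H_{\Omega,k}\) is compact. The assumption \(\sum_k\lambda_k\|H_{\Omega,k}\|_{\cB}<\infty\) means the series converges absolutely in operator norm. Its norm limit coincides with the weak limit identified above, so \(H_{\Omega,S}\) is a norm limit of compact operators and hence compact. Proposition~\ref{prop:PositiveLocalizedAverage} then gives sequential weak-to-norm continuity \(L^2(\rd)\to L^p(\Omega)\) for all finite \(p\). So \(S\in\cBwtn\subseteq\cBwc\).

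For noncompactness, suppose \(\lambda_k\not\to0\). Choose a subsequence with \(\lambda_{k_j}\ge\delta>0\). Then \(u_{k_j}\rightharpoonup0\), while \(Su_{k_j}=\lambda_{k_j}u_{k_j}\) has norm at least \(\delta\). This is incompatible with compactness, since a compact operator maps weakly null sequences to norm null ones.

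The main subtlety is the interchange of the \(\Omega\)-integral with the infinite sum, which requires care in the weak operator sense. Positivity of every term makes monotone convergence applicable, so I expect this step to go through with little trouble. The substantive input is really the summability hypothesis, which upgrades termwise compactness to compactness of the sum.
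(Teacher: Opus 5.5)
Your proposal is correct and follows the paper's proof: compactness of \(H_{\Omega,S}\) comes from the operator-norm convergent sum of the trace-class pieces \(\lambda_k H_{\Omega,k}\), after which Proposition~\ref{prop:PositiveLocalizedAverage} applies, and noncompactness follows from \(Su_{k_j}=\lambda_{k_j}u_{k_j}\) with \(u_{k_j}\rightharpoonup0\). Your monotone-convergence-plus-polarization argument for the identity \(H_{\Omega,S}=\sum_k\lambda_kH_{\Omega,k}\) spells out an interchange that the paper leaves implicit.
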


\begin{proof}
We have \(0\le S\le M \mathrm{Id}\) by construction. If \(\lambda_k\not\to0\), then (possibly after passing to a subsequence) we have \(\lambda_k\ge\varepsilon\) for some $\varepsilon>0$. Since \(u_k\rightharpoonup0\), if \(S\) were compact then \(Su_k\to0\) strongly, but \(\|Su_k\|_{L^2} = \lambda_k\ge\varepsilon\), a contradiction. This proves that \(S\notin\cK\).

We clearly have \(H_{\Omega,k} \in \cS^1\), and thus
\[H_{\Omega,S}=\int_\Omega\alpha_z(S)\dd{z}=\sum_{k=1}^\infty\lambda_kH_{\Omega,k}\]
with convergence in operator norm by the summability assumption. Thus \(H_{\Omega,S}\) converges in operator norm as a sum of compact operators, hence \(H_{\Omega,S}\in\cK\). The claim then follows by Proposition \ref{prop:PositiveLocalizedAverage}.
\end{proof}

To summarize, although the operator \(S=\sum_{k=1}^\infty\lambda_k u_k\otimes u_k\) may fail to be compact if the eigenvalues \(\lambda_k\) do not tend to zero, a finite phase space region \(\Omega\) is ultimately only sensitive to the effect of the localized operator $H_{\Omega,S} = \int_\Omega\alpha_z(S)\dd{z}$.

It is clear from the previous argument that in the statement we assumed a summability condition that is strictly stronger than what would suffice to achieve the conclusion, that would be 
\[ \left\|\sum_{k>N}\lambda_kH_{\Omega,k}\right\|_{\mathcal B}\to 0, \] 
which is precisely equivalent to $H_{\Omega,S} \in \cK$. On the other hand, our practical sufficient condition can be more easily verified and generalized: For instance, it is easy to realize that the claim extends to more general operators \(S = \sum_{k=1}^\infty\lambda_k\Pi_k\) with finite-rank projections \(\Pi_k\) onto mutually orthogonal finite-dimensional subspaces \((E_k)_{k\ge1}\) of $L^2(\rd)$, provided that for every bounded measurable \(\Omega\subset\rdd\) one has
\[\sum_{k=1}^\infty\lambda_k\|H_{\Omega,\Pi_k}\|_{\cB}<\infty.\]

On the other hand, additional regularity could make the construction even more interesting. For instance, consider the stronger assumption
\[\sum_{k=1}^\infty\lambda_k C_\Omega(u_k)<\infty,\qquad C_\Omega(u_k)^2\coloneqq\int_\Omega\int_\Omega| A(u_k)(w-z)|^2\dd{z}\dd{w}.\] 
This ultimately amounts to asking Hilbert--Schmidt regularity of the pieces: 
\[\begin{aligned}\|H_{\Omega,k}\|_{\cS^2}^2&=\int_\Omega\int_\Omega\tr(\alpha_z(P_k)\alpha_w(P_k))\dd{z}\dd{w}\\&=\int_\Omega\int_\Omega|
A(u_k)(w-z)|^2\dd{z}\dd{w}=C_\Omega(u_k)^2,\end{aligned}\]
therefore 
\[\sum_{k=1}^\infty\lambda_k\|H_{\Omega,k}\|_{\cB}\le\sum_{k=1}^\infty\lambda_k C_\Omega(u_k)<\infty.\]
The main advantage of this formulation is that these conditions are expressed in terms of local norms of ambiguity functions \(A(u_k)(z)=\langle u_k,\pi(z)u_k\rangle\), and as soon as the \(A(u_k)\) lose their mass on a fixed bounded phase space region fast enough, the localized pieces \( H_{\Omega,k}\) can be summed to form a compact operator (even though the original diagonal series remains noncompact!). This phenomenon can be actually engineered in this setting. As a concrete illustration, note that for a fixed window \(\Omega\) the relevant quantity is $C_\Omega(u)$, and the summability assumption is slightly stronger than the local decay condition $C_\Omega(u_k)\to0$. If \(\Omega\) is bounded and \(\Omega-\Omega\subset B_R\), then we have
\[C_\Omega(u_k)\le|\Omega|^{1/2}\left(\int_{B_R}|A(u_k)(z)|^2\dd{z}\right)^{1/2},\]
so local decay on the single ball \(B_R\) is enough. Whenever such decay holds along a family \((v_n)\), one can choose a sufficiently sparse subsequence \(u_k=v_{n_k}\) so that the summability condition is enforced. Consider for instance a suitably sparsified family of Hermite functions: in that case we have
\[ A(\phi_n)(z)=e^{-\pi |z|^2/2}L_n(\pi |z|^2), \]
where $L_n$ is a Laguerre function, whose asymptotics give \(A(\phi_n)(z)\to0\) for $z \ne 0$, while \(|A(\phi_n)(z)|\le1\). As a consequence of dominated convergence we thus have 
\[\int_{B_R} |A(\phi_n)(z)|^2 \dd{z} \longrightarrow0 \quad \text{ for every ball } B_R\subset\mathbb R^2. \] It is then enough to choose a subsequence \(n_k\) so sparse that \(C_\Omega(\phi_{n_k})\le 2^{-k}\), and \(\lambda_k=1\).

\section{Concentration problems for operators} \label{sec:operators}
In this final section we will consider concentration problems related to phase space representations of operators. As for functions, representing an operator on phase space is useful in many situations. Consider for instance the Husimi function, defined for a bounded operator \(T\) as \[H_T(z)=\langle T\pi(z)\phi_0,\pi(z)\phi_0\rangle,\] which has a wide array of uses in quantum mechanics and related areas of physics and mathematics, see for instance~\cite{Schupp,FrankNicolaTilli,NicolaRiccardiTilli,bayer,CorderoGrochenig}. Similarly, many of the central objects in QHA can be interpreted as phase space representations of an operator, and we will consider here the associated concentration problems. The point of this section is to illustrate the correspondence between such representations of operators and an analogous representation on the operator's Weyl symbol. 

\subsection{Operator convolutions}
As we have seen, given an operator \(S\), we get a quadratic time-frequency distribution of the function \(f\) via convolution:\begin{align*}
    f\to (f\otimes f)\star \widecheck{S}.
\end{align*}
It is therefore natural to interpret the general convolution \begin{align*}
    T\to T\star \widecheck{S}
\end{align*}
as a time-frequency representation of the operator \(T\). This perspective is not only justified by the relation to Cohen's class, but also due to its connection with the Husimi function, which is readily recognized as a convolution:\begin{align*}
        H_{T}(z)=\langle T\pi(z)\phi_0,\pi(z)\phi_0\rangle=T\star\widecheck{\phi_0\otimes\phi_0}(z).
\end{align*}
As such, we can think of the distribution \(T\star\widecheck{S}\) as a generalized Husimi function. The study of such time-frequency representations was initiated by Klauder and Skagerstam~\cite{KS07}, and they have also been studied by Luef and Skrettingland~\cite{LSBerezin}. We will consider the following concentration problem for the generalized Husimi function: \begin{align}\label{OperatorProblem}
        \sup_{T\in\S^2\setminus\{0\}}\frac{\left(\int_{\Omega}|T\star\widecheck{S}(z)|^p \dd{z}\right)^{1/p} }{\|T\|_{\S^2}}.
\end{align}

The special case \(p=1\) is known to be connected to the so-called \textit{operator-valued localization operators}~\cite{QLS}. Using this correspondence, the case \(p=1\) has been considered in \cite{QLS,SpectralDeviation} using different techniques. In the operator setting we can also define a self-representation of \(T\) via\begin{align*}
    T\to T\star \widecheck{T}(z).
\end{align*}
This is known as the total correlation function \cite{LS20,LocalStruc}, which is denoted by \(\widetilde{T}(z)\), and gives us the problem \begin{align*}
        \sup_{T\in\S^2\setminus\{0\}}\frac{\left(\int_{\Omega}|\widetilde{T}(z)|^p \dd{z}\right)^{1/p} }{\|T\|_{\S^2}^2}.
\end{align*}
To settle these problems, let us recall a fundamental fact from QHA~\cite[Proposition 2.2]{LS20}: We have \(T\star S(z)=a_T*a_S(z)\), namely the convolution of operators is the function given by convolving the Weyl symbols. Thus, letting \(T_z\) denote translation by \(z\), the integrals in question are exactly equal to \begin{align*}
                \sup_{T\in\S^2\setminus\{0\}}\frac{\left(\int_{\Omega}|a_T * \widecheck{a_S}(z)|^p \dd{z}\right)^{1/p} }{\|T\|_{\S^2}}=        \sup_{T\in\S^2\setminus\{0\}}\frac{\left(\int_{\Omega}|\langle a_T,\overline{T_za_S}\rangle|^p \dd{z}\right)^{1/p} }{\|T\|_{\S^2}}.
\end{align*}
Existence of optimizers therefore boils down to the same problem for the function-function convolution of the symbols, which was studied before in \cite{Nicola-Romero-Trapasso-2022}. 

\begin{proof}[Proof of Proposition~\ref{Prop:HSExistence}]
    Let us treat the total correlation case first. We have \begin{align*}
         \sup_{T\in\S^2\setminus\{0\}}\frac{\left(\int_{\Omega}|\widetilde{T}(z)|^p \dd{z}\right)^{1/p} }{\|T\|_{\S^2}^2}&=\sup_{T\in\S^2\setminus\{0\}}\frac{\left(\int_{\Omega}|a_T*\widecheck{a_T}(z)|^p \dd{z}\right)^{1/p} }{\|T\|_{\S^2}^2}\\
         &=\sup_{a_T\in L^2(\R^{2d})\setminus\{0\}}\frac{\left(\int_{\Omega}|a_T*\widecheck{a_T}(z)|^p \dd{z}\right)^{1/p} }{\|a_T\|_{L^2}^2}\\
         &=\sup_{a_T\in L^2(\R^{2d})\setminus\{0\}}\frac{\left(\int_{\Omega}|\langle a_T,\overline{T_za_T}\rangle|^p \dd{z}\right)^{1/p} }{\|a_T\|_{L^2}^2},
    \end{align*}
where we can replace the norm of the operator with that of the symbol by Pool's theorem~\cite{Pool}. The same argument as in the proof of \cite[Proposition 1.3]{Nicola-Romero-Trapasso-2022} now applies, showing that the supremum is \(|\Omega|^{1/p} \), but it is not attained. 

For generalized Husimi case, optimizing over Hilbert--Schmidt operators reduces via Weyl symbols to a fixed-window convolution problem, which shares the same compactness mechanism underlying fixed-window STFT concentration studied in the proof of \cite[Proposition 5.1]{Nicola-Romero-Trapasso-2022}. Indeed, note that for any fixed function \(g\in L^2(\R^{2d})\) the functional \(f\to f*\widecheck{g}(z)=\langle f, \overline{T_zg}\rangle\) is sequentially weakly continuous on \(L^2(\R^{2d})\). Since we also have the \(L^\infty\)-estimate \(\|f*\widecheck{g}\|_{L^\infty}\leq \|f\|_{L^2}\|g\|_{L^2}\), the dominated convergence theorem implies that the functional \(f\mapsto f*\check g|_\Omega \) is weak-to-strong sequentially continuous from \(L^2(\R^{2d})\) to \(L^p(\Omega)\). By Pool's theorem, the same is true for the functional \(T\to T \star \widecheck{S}(z)\), and the claim follows by the direct method. 
\end{proof}

Before moving on, let us also briefly mention the $p=\infty$ case, as the solutions are quite simple. 
\begin{prop}\label{Prop:HSLInfinity}
    Let \(\Omega\subset \R^{2d}\) be measurable with \(0<|\Omega|\), and let \(S\in \S^2\). We have
    \begin{align*}
        \sup_{T\in\S^2\setminus\{0\}}\frac{\|T\star\widecheck{S}\|_{L^\infty(\Omega)}}{\|T\|_{\S^2}}=\|S\|_{\S^2},
\end{align*}
    and the supremum is attained. 
\end{prop}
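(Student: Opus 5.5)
The plan is to pass to Weyl symbols, exactly as in the proof of Proposition~\ref{Prop:HSExistence}. By \cite[Proposition 2.2]{LS20} we have $T\star\widecheck{S}(z)=a_T*\widecheck{a_S}(z)=\langle a_T,\overline{T_z a_S}\rangle$. By Pool's theorem, $\|T\|_{\S^2}=\|a_T\|_{L^2}$ and $\|S\|_{\S^2}=\|a_S\|_{L^2}$, with $T\mapsto a_T$ a unitary bijection of $\S^2$ onto $L^2(\R^{2d})$. The problem therefore becomes computing
\[ \sup_{a\in L^2(\R^{2d})\setminus\{0\}} \frac{\|a*\widecheck{a_S}\|_{L^\infty(\Omega)}}{\|a\|_{L^2}}. \]

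For the upper bound, Cauchy--Schwarz and translation invariance of the $L^2$ norm give $|a*\widecheck{a_S}(z)|\le \|a\|_{L^2}\|T_z a_S\|_{L^2}=\|a\|_{L^2}\|a_S\|_{L^2}$ for every $z$. Hence the quotient is at most $\|S\|_{\S^2}$. Alternatively, Werner's Young inequality with $p=q=2$ and $r=\infty$ gives the same bound directly at the operator level.

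For attainment, assume $S\neq0$; otherwise every $T$ is an optimizer. I take $z_0$ to be a Lebesgue density point of $\Omega$ and choose the symbol $a\coloneqq \overline{T_{z_0}a_S}$, so that $a*\widecheck{a_S}(z_0)=\|a_S\|_{L^2}^2=\|a\|_{L^2}\|a_S\|_{L^2}$. At the operator level this corresponds to $T=L_a$, which is essentially $\alpha_{z_0}(S^*)$ (with the reflection conventions absorbed). The function $z\mapsto a*\widecheck{a_S}(z)$ is continuous, being a convolution of two $L^2$ functions, or equivalently because translation is strongly continuous on $L^2$.

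The one subtle step is that an $L^\infty(\Omega)$ norm ignores values at single points, so we must show the essential supremum actually reaches $\|a\|_{L^2}\|a_S\|_{L^2}$. For $\varepsilon>0$, continuity gives a neighborhood $U$ of $z_0$ on which $|a*\widecheck{a_S}|>\|a_S\|_{L^2}^2-\varepsilon$. Since $z_0$ is a density point, $|U\cap\Omega|>0$, and so $\|a*\widecheck{a_S}\|_{L^\infty(\Omega)}\ge\|a_S\|_{L^2}^2-\varepsilon$. Letting $\varepsilon\to0$ and combining with the upper bound yields equality. The quotient for $T=L_a$ is therefore exactly $\|S\|_{\S^2}$, and the supremum is attained. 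This mirrors the density-point argument of Proposition~\ref{Prop:NumericalRadius}.
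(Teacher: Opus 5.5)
Your proposal is correct and is essentially the paper's argument: the upper bound comes from Young's inequality (equivalently Cauchy--Schwarz), and the lower bound comes from testing with $T=\alpha_{z_0}(S^*)$ at a density point of $\Omega$ together with continuity of $z\mapsto T\star\widecheck{S}(z)$. The detour through Weyl symbols is harmless but unnecessary. Your $L_a$ with $a=\overline{T_{z_0}a_S}$ is exactly $\alpha_{z_0}(S^*)$, with no reflection conventions to absorb, and the paper simply computes $\tr\bigl(\alpha_{z_0}(S^*)\alpha_{z_0}(S)\bigr)=\tr(S^*S)=\|S\|_{\S^2}^2$ directly at the operator level.
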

\begin{proof}
    Consider from now on $S \ne 0$, otherwise the quotient is zero and in that case every $T \in \cS^2 \smo$ is an optimizer. 
    Young's inequality gives the estimate\begin{align*}
    \|T\star\widecheck{S}\|_{L^\infty(\Omega)}\leq\|T\star\widecheck{S}\|_{L^\infty(\R^{2d})}\leq\|T\|_{\S^2}\|\widecheck{S}\|_{\S^2}=\|T\|_{\S^2}\|S\|_{\S^2},
    \end{align*}
    which shows that \(\|S\|_{\S^2}\) is an upper bound. To obtain the lower bound, let \(w\) be a density point of \(\Omega\) and choose \(T=\alpha_w(S^*)\). Then \(\|T\|_{\cS^2}=\|S\|_{\cS^2}\) and 
    \[ T\star\widecheck S(w)=\operatorname{tr}(\alpha_w(S^*)\alpha_w(S))= \tr(S^*S) = \|S\|_{\S^2}^2. \] 
   Since the function \(z\mapsto (T\star\widecheck S)(z)\) is continuous, for every \(\varepsilon>0\) there is a neighborhood \(U\) of \(w\) such that \(U\cap\Omega\) has positive measure and 
\[ |(T\star\widecheck S)(z)|\ge \|S\|_{\cS^2}^2-\varepsilon, \qquad z\in U. \]
As a result, after division by \(\|T\|_{\cS^2}=\|S\|_{\cS^2}\) and letting \(\varepsilon\downarrow0\) we obtain
\[ \sup_{T \in \cS^2 \smo}\frac{\|T\star\widecheck S\|_{L^\infty(\Omega)}}{\|T\|_{\cS^2}}\ge \|S\|_{\cS^2}.\]
It is then clear that the supremum is attained, say by \(T=\alpha_w(S^*)\).
    \end{proof}

\begin{prop}\label{prop:RadarTC}
    Let \(T\in \S^2\) be a nonzero Hilbert--Schmidt operator, and \(z\in \R^{2d}\). Then
    \begin{align*}
        |\tilde{T}(z)|\leq \|T\|_{\S^2}^2
    \end{align*} 
    with equality if and only if \(z=0\) and \(T^*=cT\) for \(c\in \C\) with \(|c|=1\).

    As a consequence, for every measurable \(\Omega\subset \R^{2d}\) with \(|\Omega|>0\),
    \begin{align*}
        \sup_{T\in\S^2\setminus\{0\}}\frac{\|\widetilde{T}\|_{L^\infty(\Omega)}}{\|T\|_{\S^2}^2}=1.
\end{align*}
The supremum is attained if and only if \(|\Omega\cap B_r|>0\) for every \(r>0\). In this case, the optimizers are precisely the nonzero Hilbert--Schmidt operators satisfying \(T^*=cT\) for some \(c\in \C\) with \(|c|=1\).
\end{prop}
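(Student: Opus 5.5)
The plan is to work entirely with Weyl symbols, via the identity $T\star S=a_T*a_S$ from \cite[Proposition 2.2]{LS20} together with Pool's theorem $\|T\|_{\S^2}=\|a_T\|_{L^2}$. Writing $a=a_T\in L^2(\rdd)$ and using $a_{\widecheck T}=\widecheck{a}$, I get
\[
\widetilde T(z)=a*\widecheck a(z)=\int_{\rdd}a(w)\,a(w-z)\dd{w}=\bigl\langle a,\overline{T_z a}\bigr\rangle_{L^2}.
\]
Cauchy--Schwarz then gives $|\widetilde T(z)|\le\|a\|_{L^2}^2=\|T\|_{\S^2}^2$ immediately.

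For the equality case, equality in Cauchy--Schwarz means $a=c\,\overline{T_za}$ a.e. for some $|c|=1$. Taking moduli, $|a|^2=T_z|a|^2$, so $|a|^2\in L^1(\rdd)\smo$ is invariant under translation by $z$. For $z\ne0$ this is impossible: the integral of $|a|^2$ over the disjoint translates of a suitable slab would have to be equal and positive infinitely often, giving infinite $L^1$ norm. Hence $z=0$, and then $a=c\bar a$. Since $a_{T^*}=\overline{a_T}$ for the Weyl calculus, this says $T^*=\bar c\,T$, and $|\bar c|=1$. Conversely, if $T^*=cT$ then $\bar a=c\,a$, so $\widetilde T(0)=\int a^2=\bar c\|a\|^2$, which has modulus $\|T\|_{\S^2}^2$.

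For the supremum statement, I will use two facts about $\widetilde T$: it is continuous and belongs to $C_0(\rdd)$, since it is the convolution of two $L^2$ functions. The upper bound $1$ follows from the first part. For the lower bound I take dilated real Gaussians $a_\lambda(w)=\lambda^{-d}e^{-\pi|w|^2/\lambda^2}$; these are real and even, so they are symbols of self-adjoint operators. Their normalized autocorrelation equals $e^{-\pi|z|^2/(2\lambda^2)}$, which tends to $1$ locally uniformly as $\lambda\to\infty$. Restricting to a bounded subset of $\Omega$ of positive measure then shows the supremum equals $1$.

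Attainment is the main point. If $|\Omega\cap B_r|>0$ for every $r$, I take any $T$ with $T^*=cT$. Then $|\widetilde T(0)|=\|T\|_{\S^2}^2$, and continuity at $0$ gives $\|\widetilde T\|_{L^\infty(\Omega)}=\|T\|_{\S^2}^2$. Conversely, suppose some normalized $T$ attains the value $1$, but $|\Omega\cap B_r|=0$ for some $r>0$. On the set $\{|z|\ge r\}$ we have $|\widetilde T|<1$ pointwise by the equality case. Because $\widetilde T\in C_0$, we have $|\widetilde T|<1/2$ outside some compact set, and on the compact annulus in between continuity gives a maximum strictly below $1$. So $\sup_{|z|\ge r}|\widetilde T|<1$, and hence $\|\widetilde T\|_{L^\infty(\Omega)}<1$, a contradiction. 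The same argument identifies the optimizers: if $\|\widetilde T\|_{L^\infty(\Omega)}=\|T\|_{\S^2}^2$, then for every $r$ the sup over $\{|z|\ge r\}$ is strictly smaller. So the value must be approached along points tending to $0$, and by continuity $|\widetilde T(0)|=\|T\|_{\S^2}^2$, which forces $T^*=cT$ by the first part.

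The step I expect to need the most care is this compactness/continuity argument showing that the supremum over $\{|z|\ge r\}$ is strictly below $\|T\|_{\S^2}^2$; it relies on the $C_0$ property of the autocorrelation.
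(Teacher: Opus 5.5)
Your proof is correct. The final attainment and optimizer-identification argument is essentially the paper's, but the two key sub-arguments take a different route.

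\textbf{Equality case.} The paper stays at the operator level. Equality in the Hilbert--Schmidt Cauchy--Schwarz inequality gives $\alpha_z(T)=cT^*$, and iterating gives $\alpha_{2z}(T)=T$. Then $\mathcal F_W(T)$ must vanish off the null set $\{w: e^{-2\pi i[2z,w]}=1\}$, so $T=0$ by injectivity of $\mathcal F_W$. You pass to Weyl symbols and take moduli, which removes the conjugation at once. Then $|a_T|^2\in L^1(\rdd)$ is invariant under translation by $z$, hence zero by your slab argument. This avoids both the doubling trick and the Fourier--Wigner transform.

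\textbf{Lower bound.} The paper restricts to rank-one $T=f\otimes f$, for which $\widetilde T=|Af|^2$, and imports the construction from the Nicola--Romero--Trapasso ambiguity problem. You use dilated real Gaussian symbols with the explicit normalized autocorrelation $e^{-\pi|z|^2/(2\lambda^2)}$. This is self-contained and shows the value $1$ is approached by self-adjoint operators directly.

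\textbf{The $C_0$ property.} You get it immediately from $L^2*L^2\subset C_0$, where the paper appeals to a finite-rank approximation.

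Overall, your route reduces everything to classical facts about $L^2(\rdd)$ through Pool's theorem and the symbol-convolution identity. The paper's route keeps the argument intrinsic to QHA and makes explicit the link with the rank-one (ambiguity) concentration problem.
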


\begin{proof} The proof mimics the standard strategy for obtaining the classical radar correlation estimate. By Cauchy--Schwarz we have  
\[|\widetilde T(z)|=|\operatorname{tr}(T\alpha_z(T))|\le\|T\|_{\cS^2}\|\alpha_z(T)\|_{\cS^2}=\|T\|_{\cS^2}^2.\] 
Equality holds here if and only if \[\alpha_z(T)=cT^*\] for some unimodular \(c \in \C\). Taking adjoints gives \(\alpha_z(T^*)=\overline c\,T\), and repeated application of \(\alpha_z\) again yields \(\alpha_{2z}(T)=c\alpha_z(T^*)=c\overline c\,T=T\). Therefore, equality at \(z\) implies that \(T\) is invariant under \(\alpha_{2z}\). We claim that this is impossible for \(z\ne0\) and nonzero Hilbert--Schmidt \(T\). Indeed, taking the Fourier--Wigner transform gives 
\[\mathcal F_W(T)(w)=e^{-2\pi i[2z,w]}\mathcal F_W(T)(w) \implies (1-e^{-2\pi i[2z,w]})\mathcal F_W(T)(w)=0.\] 
Since \(z \ne 0\) and \(\mathcal F_W(T)\in L^2(\rdd)\), the latter must be supported on the set \(\{w \in \rdd :e^{-2\pi i[2z,w]}=1\}\), which is a countable union of affine hyperplanes of codimension one, hence it has Lebesgue measure zero. As such, we have $\mathcal F_W(T)=0$ a.e., and thus \(T=0\) by injectivity, a contradiction. Therefore, equality can occur only for \(z=0\), which is exactly \[T=cT^*\] for some \(|c|=1\), or equivalently \(T^*=\overline c\,T\) as claimed. 

Let us now prove the concentration statement. The upper bound follows immediately from \(|\widetilde T(z)|\le\|T\|_{\cS^2}^2\), just proved. For the converse, note that for rank-one operators \(T=f\otimes f\) one has \(\widetilde T(z)=(T\star\widecheck T)(z)=|Af(z)|^2\), hence the lower-bound construction of~\cite[Proposition 1.2]{Nicola-Romero-Trapasso-2022} applies here, and gives
\[ \sup_{T\in \cS^2\smo}\frac{\|\widetilde T\|_{L^\infty(\Omega)}}{\|T\|_{\cS^2}^2}\ge1, \]
hence the claim. 

Concerning attainment, suppose first that \(|\Omega\cap B_r|>0\) for every \(r>0\), and let \(T\ne0\) satisfy \(T^*=cT\). Then equality holds at \(z=0\), so \(|\widetilde T(0)|=\|T\|_{\cS^2}^2\). By continuity we have that \(|\widetilde T(z)|\) is arbitrarily close to \(\|T\|_{\cS^2}^2\) on small balls around \(0\), and each such ball intersects \(\Omega\) in positive measure, so \(\|\widetilde T\|_{L^\infty(\Omega)}=\|T\|_{\cS^2}^2\) and \(T\) is thus an optimizer. 

Conversely, suppose the supremum is attained by some \(T\ne0\); then \(\|\widetilde T\|_{L^\infty(\Omega)}=\|T\|_{\cS^2}^2\). We first show that \(|\Omega\cap B_r|>0\) for every \(r>0\). Otherwise, \(|\Omega\cap B_r|=0\) for some \(r>0\), so the \(L^\infty(\Omega)\)-norm is unchanged if \(\Omega\) is replaced by \(\Omega\setminus B_r\), which is contained in \(\{|z|\ge r\}\). By the previous equality statement, strict inequality must hold at every point with \(|z|\ge r\). Since \(\widetilde T\in C_0(\rdd)\), as seen by an easy finite-rank approximation argument, the supremum of \(|\widetilde T|\) on the closed set \(\{|z|\ge r\}\) is attained, and is strictly smaller than \(\|T\|_{\cS^2}^2\), which is a contradiction. It remains to identify the optimizers: Again due to \(\widetilde T\in C_0(\rdd)\), for each \(n\) the set 
\[E_n\coloneqq\left\{z\in\Omega:|\widetilde T(z)|>\|T\|_{\cS^2}^2-\frac1n\right\}\]
has positive measure. The \(C_0\)-property gives \(R>0\) such that \(|\widetilde T(z)|<\frac12\|T\|_{\cS^2}^2\) for \(|z|\ge R\), so for \(n\) large enough we have \(E_n\subset B_R\) up to a null set. We can thus choose \(z_n\in E_n\cap B_R\) and, after passing to a subsequence, assume \(z_n\to z_*\). Continuity then implies \(|\widetilde T(z_*)|=\|T\|_{\cS^2}^2\). The equality statement implies \(z_*=0\) and \(T^*=cT\) for some \(|c|=1\), therefore the optimizers are precisely the nonzero Hilbert--Schmidt operators satisfying \(T^*=cT\).
\end{proof}

\begin{remark}
    Unlike the classical radar estimate, equality cannot be attained for arbitrary inputs \(S\). This is due to the occurrence of nilpotent elements in \(\S^2\). If for instance \(f,g \in L^2(\R^d)\) are two orthonormal functions, the Hilbert--Schmidt operator \(f\otimes g\) has norm \(1\), but the total correlation at \(0\) is \(\tr(f\otimes g f\otimes g)=|\langle f,g\rangle|^2=0\). Since \(z=0\) is required to attain the maximal value, the total correlation cannot be \(1\) at any other point, either.
\end{remark}

\subsubsection{Beyond Hilbert--Schmidt} So far in this section we have optimized over the Hilbert--Schmidt operators. This is the most natural analogue of the supremum in Problem \eqref{Problem}, but it is not difficult to enlarge the input regularity to all the Schatten classes, at least for the generalized Husimi concentration problem. 

\begin{prop} \label{prop:SchattenLinearHusimi}
Consider \(1\le p<\infty\), \(\Omega\subset\rdd\) with \(0<|\Omega|<\infty\), and \(S\ne0\).

\begin{enumerate}
\item If \(1<q<\infty\), \(q'\) is conjugate to \(q\), and \(S\in\cS^{q'}\), then the map
$ T\mapsto T\star\widecheck S $ is sequentially weak-to-norm continuous from bounded subsets of \(\cS^q\) into \(L^p(\Omega)\). Consequently, the supremum
\[ \sup_{T\in\cS^q\setminus\{0\}} \frac{\|T\star\widecheck S\|_{L^p(\Omega)}}{\|T\|_{\cS^q}} \]
is attained.

\item If \(q=1\) and \(S\in \cK\), then the same conclusion holds in the weak* sense with input \(\cS^1\).

\item If \(q=\infty\) and \(S\in \cS^1\), then the same conclusion holds in the weak* sense with input \(\cB\). 
\end{enumerate}
\end{prop}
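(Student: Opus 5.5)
The plan is the same template as the proof of Theorem~\ref{CompactExistence} and the generalized Husimi part of Proposition~\ref{Prop:HSExistence}. We establish pointwise convergence of $T_n\star\widecheck S(z)$ through the duality pairing, add a uniform $L^\infty$ bound from Werner's Young inequality, and invoke dominated convergence on $\Omega$. Attainment then follows from the direct method combined with homogeneity. The three cases differ only in which duality, and which compactness of the unit ball, is used.

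First fix $z\in\rdd$ and write $T\star\widecheck S(z)=\tr(T\alpha_z(S))$. In case (1), $\alpha_z(S)\in\cS^{q'}=(\cS^q)^*$, so this is a continuous linear functional on $\cS^q$. Hence $T_n\rightharpoonup T$ in $\cS^q$ gives $T_n\star\widecheck S(z)\to T\star\widecheck S(z)$ for every $z$. In case (2), $\alpha_z(S)\in\cK$ and $\cK^*=\cS^1$, so weak* convergence in $\cS^1$ gives the same pointwise limit. In case (3), $\alpha_z(S)\in\cS^1$ and $(\cS^1)^*=\cB$, so weak* convergence in $\cB$ does the job. In all three cases $\alpha_z$ preserves the relevant class, because it is unitary conjugation.

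Next, Young's inequality $\|T\star\widecheck S\|_{L^\infty}\le\|T\|_{\cS^q}\|S\|_{\cS^{q'}}$, together with its endpoint analogues ($\cS^1\times\cB$ and $\cB\times\cS^1$), gives a uniform bound $\sup_n\|T_n\|\,\|S\|$ on the sequence. A weakly or weak* convergent sequence is bounded, by uniform boundedness. Since $|\Omega|<\infty$, dominated convergence then yields $T_n\star\widecheck S\to T\star\widecheck S$ in $L^p(\Omega)$. This is exactly the weak-to-norm continuity claimed in the statement.

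For attainment, take a normalized maximizing sequence. The closed unit ball of $\cS^q$ is weakly sequentially compact for $1<q<\infty$ by reflexivity. The unit balls of $\cS^1=\cK^*$ and $\cB=(\cS^1)^*$ are weak* sequentially compact by Banach--Alaoglu, using that $\cK$ and $\cS^1$ are separable, since $L^2(\rd)$ is separable. Pass to a convergent subsequence $T_n\to T$. Continuity gives $\|T\star\widecheck S\|_{L^p(\Omega)}=\Lambda$, where $\Lambda$ denotes the supremum in the statement.

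The main point requiring care is showing that $\Lambda>0$, so that $T\ne0$. I would argue this as in Lemma~\ref{lem:nonzero-window-nonvanishing}. Pick a density point $w$ of $\Omega$ and a rank-one $T_0=u\otimes v$, which lies in every $\cS^q$ and in $\cB$, such that $T_0\star\widecheck S(w)=\langle \alpha_w(S)u,v\rangle\ne0$; this is possible because $S\neq0$. Continuity of $z\mapsto T_0\star\widecheck S(z)$ then gives a positive $L^p(\Omega)$ norm. Once $T\ne0$, weak or weak* lower semicontinuity of the norm gives $\|T\|\le1$. Homogeneity then gives $\|T\star\widecheck S\|_{L^p(\Omega)}/\|T\|\ge\Lambda$, so $T$ attains the supremum. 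Unlike in the proof of Theorem~\ref{CompactExistence}, we do not need $\|T\|=1$ or strong convergence of the maximizing sequence.
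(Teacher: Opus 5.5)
Your proposal is correct and follows the same route as the paper. You get pointwise convergence of $\operatorname{tr}(T_n\alpha_z(S))$ from the relevant duality ($\cS^{q'}=(\cS^q)^*$, $\cK^*=\cS^1$, $(\cS^1)^*=\cB$), combine it with a uniform $L^\infty$ bound and dominated convergence on $\Omega$, and conclude by the direct method. You also supply details the paper leaves implicit: positivity of the supremum, so the limit is nonzero; homogeneity to handle $\|T\|\le 1$; and separability of $\cK$ and $\cS^1$, which gives sequential weak* compactness of the unit balls.
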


\begin{proof}
For every admissible input \(T\) one has $ (T\star\widecheck S)(z)=\operatorname{tr}(T\alpha_z(S))$.

Assume first that \(1<q<\infty\), \(S\in\cS^{q'}\), and \(T_n\rightharpoonup T\) weakly in \(\cS^q\), with \((T_n)\) bounded. Then, for every fixed \(z\in\rdd\), Schatten duality and \(\alpha_z(S)\in\cS^{q'}=(\cS^q)^*\) yield
\[ \tr(T_n\alpha_z(S)) \to \tr(T\alpha_z(S)). \]
If \(\|T_n\|_{\cS^q}\le M\) for some $M>0$, then \(\|T\|_{\cS^q}\le M\) as well and
\[ |(T_n-T)\star\widecheck S(z)| \le 2M\|S\|_{\cS^{q'}}. \]
Since \(|\Omega|<\infty\), dominated convergence yields 
\[ \|T_n\star\widecheck S-T\star\widecheck S\|_{L^p(\Omega)}\to0, \]
proving that the map \(T\mapsto T\star\widecheck S\) is weak-to-norm continuous on bounded subsets as claimed. As a result, $T\mapsto \|T\star\widecheck S\|_{L^p(\Omega)}$ is weakly continuous over the (weakly compact) closed unit ball of \(\cS^q\). 

The remaining cases can be handled similarly, with minimal updates, recalling the identifications $\cS^1=\cK^*$ and $\cB = (\cS^1)^*$.
\end{proof}

We mention that the Hilbert--Schmidt assumption can be even varied beyond the Schatten class regularity. For instance, one may keep the Weyl-symbol point of view and replace the symbol space \(L^2(\rdd)\) by a modulation space, and define \textit{operator modulation spaces}~\cite{QTFA}: 
\[ \mathcal M^{r} \coloneqq \operatorname{op}^{\mathrm{w}}\bigl(M^{r}(\rdd)\bigr),
\qquad \|T\|_{\mathfrak M^{r}} \coloneqq \|a_T\|_{M^{r}}, \] 
where \(a_T\) is the Weyl symbol of \(T\). We prefer to avoid this level of generality here, which comes with additional technicalities. 

The total correlation problem cannot be extended with the same ease beyond $\cS^2$. For \(1\le q<2\), the same approach only gives the universal bound
\[ \|T\star\widecheck T\|_{L^p(\Omega)} \le |\Omega|^{1/p}\|T\|_{\cS^2}^2 \le |\Omega|^{1/p}\|T\|_{\cS^q}^2, \]
thus the quotient is bounded by \(|\Omega|^{1/p}\). However, sharpness of this bound would force any normalized maximizing sequence \(\|T_n\|_{\cS^q}=1\) to satisfy $\|T_n\|_{\cS^2}\to1$, namely the singular values of \(T_n\) must asymptotically saturate the embedding \(\cS^q\subset\cS^2\) and concentrate in an essentially rank-one profile, where in case \(\Omega\) is a ball we know the value of the supremum will smaller~\cite{Flandrin}. For \(q>2\) the situation is even worse, since the basic \(\cS^2\)-based estimate is unavailable without extra regularity, since \(\cS^q\not\subset\cS^2\). To summarize, the general Schatten-input total-correlation problem is not resolved by the Hilbert--Schmidt method.

\subsubsection{Optimization over density operators} Let us now consider the operator optimization problem for a smaller class of operators, namely the density operators. We define the density operators to be the set \[\D=\{S\in \S^1\colon S\geq 0, \tr(S)=1\}.\] There are several reasons why one would consider optimization over density operators. In light of Young's theorem for operator-operator convolutions, this allows us to extend the window class from \(\S^2\) to \(\mathcal{K}\). More importantly, the density operators are central to quantum mechanics, as they describe the state of a quantum system~\cite[Chapter 13]{deGossonBook}, and in that sense optimization over density operators can be thought of as a more ``physical'' problem than optimization over Hilbert--Schmidt operators. Indeed, the Husimi function is usually only considered for \(T\in \D\). Since the set of density operators is convex, the conclusion of Proposition~\ref{DensityOptimization} is to be expected. Nevertheless, some care must be taken in order to prove the result.

\begin{proof}[Proof of Proposition \ref{DensityOptimization}]
    Let \(T\in\D\) and write the spectral decomposition
    \[  T=\sum_{n=1}^{\infty}\lambda_n f_n\otimes f_n,\qquad \lambda_n\ge0,\qquad \sum_n\lambda_n=1, \qquad \|f_n\|_{L^2}=1. \]
    The spectral expansion converges in $\cS^1$, and by linearity and uniform boundedness of $\alpha_z(S)$ with respect to $z$ we have that \( T\star\widecheck S=\sum_{n=1}^{\infty}\lambda_n Q_Sf_n\) pointwise. Dominated convergence and Minkowski's inequality yield
    \[  \|T\star\widecheck S\|_{L^p(\Omega)} \le \sum_{n=1}^{\infty}\lambda_n\|Q_Sf_n\|_{L^p(\Omega)} \le \Lambda_{p,\Omega}(S). \]
    The reverse inequality follows at once because every rank-one density \(f\otimes f\) with \(\|f\|_{L^2}=1\) belongs to \(\D\). Therefore, we have proved that
    \[ \sup_{T\in\D}\|T\star\widecheck S\|_{L^p(\Omega)}=\Lambda_{p,\Omega}(S). \]
    If \(\Lambda_{p,\Omega}(S)\) is attained by a unit vector \(f\), then \(T=f\otimes f\) is a maximizing density operator. In particular, this applies to \(S \in \cK\) by Theorem~\ref{CompactExistence}.\end{proof}

\begin{remark}
As an alternative route, one can introduce the auxiliary set
\[
\mathcal C\coloneqq\{T\in\cS^1:T\ge0,\ \operatorname{tr}(T)\le1\}
\]
as a weak-* compact convex replacement for \(\D\). One can then prove that its extreme points are
\[ \operatorname{ex}(\mathcal C)=\{0\}\cup\{f\otimes f:\|f\|_{L^2}=1\}
=\{0\}\cup\operatorname{ex}(\mathcal D). \]
Thus, in principle, Bauer's maximum principle~\cite{Bauer} can be applied to \(\mathcal C\), provided one verifies the required weak-star continuity or upper semicontinuity of the functional. 
\end{remark} 

The main takeaway from Proposition \ref{DensityOptimization} should be that when optimizing over the density operators, all the optimization properties of the distribution \(T\star\widecheck{S}\) can be deduced by looking at the Cohen class distribution \(Q_Sf\) instead. In the case \(p=1\) and a positive \(S\) the same conclusion was reached in \cite{QLS,SpectralDeviation}. There it was also shown that the corresponding localization operators share the same eigenvalues, and as such, when \(S\) is positive and \(p=1\) the equality in Proposition \ref{DensityOptimization} extends to all stationary points of the problem, not just the optimizer.

Lastly, in the \(L^\infty\)-case one might think one can do better than \(\Lambda_{\infty,\Omega}(S)\) when optimizing over \(\D\), since there are more operators to choose from. This is not the case: The conclusion of Proposition~\ref{DensityOptimization} also holds in the case \(p=\infty\).

\begin{prop}\label{Prop:DensityLInfinity}
Let \(\Omega\subset\R^{2d}\) be such that $0<\abs{\Omega}$ and let $S \in \cB$. Then
\[
        \sup_{T\in\D}\|T\star\widecheck{S}\|_{L^\infty(\Omega)}=w(S)=\Lambda_{\infty,\Omega}(S),
\]
and the supremum is therefore attained if \(S\in \NRA\).
\end{prop}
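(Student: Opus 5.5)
The plan is to prove the two inequalities $\sup_{T\in\D}\|T\star\widecheck S\|_{L^\infty(\Omega)}\le w(S)$ and $\ge \Lambda_{\infty,\Omega}(S)$. Together with Proposition~\ref{Prop:NumericalRadius}, which gives $\Lambda_{\infty,\Omega}(S)=w(S)$, these yield the chain of equalities. Attainment then comes for free from the rank-one case.

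For the upper bound, fix $T\in\D$ and expand it spectrally as $T=\sum_n\lambda_n f_n\otimes f_n$, where $\lambda_n\ge0$, $\sum_n\lambda_n=1$ and $\|f_n\|_{L^2}=1$. As in the proof of Proposition~\ref{DensityOptimization}, the series converges in $\cS^1$ and $\alpha_z(S)$ is uniformly bounded, so for every $z\in\rdd$
\[ T\star\widecheck S(z)=\tr\bigl(T\alpha_z(S)\bigr)=\sum_{n}\lambda_n\langle \alpha_z(S)f_n,f_n\rangle. \]
Each term satisfies $|\langle\alpha_z(S)f_n,f_n\rangle|\le w(\alpha_z(S))=w(S)$, since the numerical radius is invariant under unitary conjugation. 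Hence $|T\star\widecheck S(z)|\le\sum_n\lambda_n w(S)=w(S)$ pointwise, and in particular on $\Omega$. Equivalently, $\tr(T\alpha_z(S))$ lies in the closed convex hull of the numerical range of $\alpha_z(S)$, which is contained in the disc of radius $w(S)$. This bound is uniform in $T$, so the supremum over $\D$ is at most $w(S)$.

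For the lower bound, note that every $f\otimes f$ with $\|f\|_{L^2}=1$ belongs to $\D$ and satisfies $(f\otimes f)\star\widecheck S=Q_Sf$. Therefore
\[ \sup_{T\in\D}\|T\star\widecheck S\|_{L^\infty(\Omega)}\ge\sup_{\|f\|_{L^2}=1}\|Q_Sf\|_{L^\infty(\Omega)}=\Lambda_{\infty,\Omega}(S)=w(S), \]
where the last equality is Proposition~\ref{Prop:NumericalRadius}. If $S\in\NRA$, say $|\langle Sg,g\rangle|=w(S)$ with $\|g\|_{L^2}=1$, the final part of Proposition~\ref{Prop:NumericalRadius} provides $f=\pi(z_0)g$, with $z_0$ a density point of $\Omega$, such that $\|Q_Sf\|_{L^\infty(\Omega)}=w(S)$. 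Then $T=f\otimes f\in\D$ attains the supremum.

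The only step requiring care is justifying the pointwise termwise expansion of $\tr(T\alpha_z(S))$. This follows from $\cS^1$-convergence of the spectral series and the continuity of $A\mapsto\tr(A\alpha_z(S))$ on $\cS^1$, whose norm is bounded by $\|S\|_{\cB}$. No dominated convergence in $z$ is needed, because the $L^\infty$ bound holds at every point. Beyond that the argument is routine, so I expect no real obstacle.
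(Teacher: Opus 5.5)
Your proposal is correct and follows the paper's proof essentially step by step. The upper bound comes from the spectral decomposition of $T\in\D$ together with the pointwise bound $|\langle\alpha_z(S)f_j,f_j\rangle|\le w(\alpha_z(S))=w(S)$, and the lower bound and attainment come from testing on rank-one states $f\otimes f$ and invoking Proposition~\ref{Prop:NumericalRadius}; your justification of the termwise expansion via $|\tr(A\alpha_z(S))|\le\|A\|_{\cS^1}\|S\|_{\cB}$ makes explicit a detail the paper leaves implicit.
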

\begin{proof}
We argue again by spectral decomposition: Since \(T\ge0\) and \(\tr(T)=1\), we can write
\[ T=\sum_j\lambda_j f_j\otimes f_j, \qquad \lambda_j\ge0,\qquad \sum_j\lambda_j=1, \]
with convergence in $\cS^1$. Hence, for every \(z\in\R^{2d}\), linearity yields $(T\star\widecheck S)(z) = \sum_j\lambda_j Q_Sf_j(z)$, therefore
\[ |(T\star\widecheck S)(z)| \le \sum_j\lambda_j |Q_Sf_j(z)| =  \sum_j\lambda_j| \langle\alpha_z(S)f_j,f_j\rangle| \le  \sum_j\lambda_j w(\alpha_z(S)). \]
Since \(w(\alpha_z(S))=w(S)\), we have \( |(T\star\widecheck S)(z)|\le w(S)\), hence \( \sup_{T\in\D}\|T\star\widecheck S\|_{L^\infty(\Omega)}\le w(S).\) 

For the converse inequality, it again suffices to test over rank-one density operators. If \(\|f\|_{L^2}=1\), then \(f\otimes f\in\D\), and $(f\otimes f)\star\widecheck S=Q_Sf$, therefore 
\[ \sup_{T\in\D}\|T\star\widecheck S\|_{L^\infty(\Omega)} \ge \sup_{\|f\|_{L^2}=1}\|Q_Sf\|_{L^\infty(\Omega)} = 
\Lambda_{\infty,\Omega}(S), \]
and the claim follows since $\Lambda_{\infty,\Omega}(S)=w(S)$.
\end{proof}

\subsection{Polarized Cohen's class}
Lastly, we consider a concentration problem when an operator \(T\) is represented on double phase space. This is done using the polarized Cohen's class, first studied in \cite{QTFA}. Like with the convolution representation, we will see that all the information in the representation is given in terms of the Weyl symbols.

The polarized Cohen's class is a representation of an operator \(T\) which is reminiscent of the short-time Fourier transform for functions. Its construction starts by polarizing the operator shift \(\alpha_z(S)=\pi(z)S\pi(z)^*\), which produces the \(\gamma\)-shifts\begin{align*}
    \gamma_{w,z}(S)=\pi(z)S\pi(w)^*.
\end{align*}
The \(\gamma\)-shift is a projective representation on the Hilbert--Schmidt operators, unlike the operator shift \(\alpha\), which is a unitary representation on the Hilbert--Schmidt operators. On the other hand, the time-frequency shift \(\pi\) is a projective representation on \(L^2\), so the idea is that the \(\gamma\)-shift on \(\S^2\) behaves like the time-frequency shifts on \(L^2\). With this in mind, we define the \textit{polarized Cohen's class} of \(T\) with respect to \(S\) as the function\begin{align*}
    Q_ST(w,z)=\langle T, \gamma_{w,z}(S)\rangle_{\S^2},
\end{align*} 
where \(S,T\in \S^2\) and \(w,z\in \R^{2d}\). Since the \(\gamma\)-shift is an operator analogue of the time-frequency shift, it is natural to believe that the polarized Cohen's class should inherit many of the properties of the STFT, and this is indeed the case. While we will not make use of them, we list some properties of the polarized Cohen's class, and direct the reader to Section 3 of \cite{QTFA} for details.
\begin{theorem}
    Let \(R,S,T,W\in \S^2\).
    \begin{itemize}
        \item $\langle Q_RS,Q_TW\rangle_{L^2(\R^{4d})}=\langle R,T\rangle_{\S^2}\overline{\langle S,W\rangle_{\S^2}}.$
        \item If \(\|S\|_{\S^2}=1\) then $T\to Q_ST$ is an isometry from \(\S^2\) to \(L^2(\R^{4d})\).
        \item If \(\|S\|_{\S^2}=1\) then \(Q_S^*Q_ST=T\).
    \end{itemize}
\end{theorem}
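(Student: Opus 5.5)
The plan is to transfer everything to Weyl symbols and reduce the three items to the classical STFT theory on $\rdd$ (Moyal's relations, isometry, reconstruction), in the same spirit as the generalized Husimi reduction in the proof of Proposition~\ref{Prop:HSExistence}. The first step is to compute the Weyl symbol of $\gamma_{w,z}(S)=\pi(z)S\pi(w)^*$. Since the Weyl symbol of $\pi(z)S\pi(z)^*$ is $T_z a_S$, I write $\pi(z)S\pi(w)^* = \pi(z)\pi(w)^*\,\alpha_w(S)$ and then use the standard formula for the Weyl symbol of a product with a time-frequency shift. Left composition with $\pi(u)$ acts on symbols as a half-shift combined with a modulation, up to a phase. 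With the change of variables $x=(z+w)/2$, $\zeta=J(z-w)$ (up to normalizing constants), the expected outcome is
\[
a_{\gamma_{w,z}(S)} = c(w,z)\, M_{\zeta}T_{x}\,a_S, \qquad |c(w,z)|=1 .
\]
Here $(x,\zeta)$ ranges over $\R^{4d}$ through a linear bijection of $(w,z)$ with constant Jacobian $J_0$.

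Combining this with Pool's theorem, the second step gives
\[
Q_ST(w,z)=\langle a_T, a_{\gamma_{w,z}(S)}\rangle_{L^2(\rdd)} = \overline{c(w,z)}\,V_{a_S}a_T(x,\zeta).
\]
Hence, up to a unimodular factor and a linear change of variables, the polarized Cohen's class is the STFT on $\rdd$ of the symbol $a_T$ with window $a_S$. The orthogonality relation in the first item then follows from Moyal's identity for the STFT on $\R^{2d}$, $\langle V_{g_1}f_1,V_{g_2}f_2\rangle=\langle f_1,f_2\rangle\overline{\langle g_1,g_2\rangle}$, together with the Jacobian $J_0$. The phases cancel because both factors carry the same $c(w,z)$. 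The first item's pairing in the statement is to be read in the convention $\langle Q_RS,Q_TW\rangle$ matching $\langle R,T\rangle\overline{\langle S,W\rangle}$, so I expect to interpret $Q_RS$ as representing $R$ with window $S$, that is, to align the roles of analyzed operator and window with Moyal. The second item is the special case of equal arguments with $\|S\|_{\S^2}=1$. The third item follows from the second: polarizing the isometry gives $\langle Q_S^*Q_ST,R\rangle=\langle Q_ST,Q_SR\rangle=\langle T,R\rangle$ for all $R\in\S^2$.

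The main obstacle is bookkeeping rather than any conceptual difficulty. I must get the exact linear map $(w,z)\mapsto(x,\zeta)$ and its Jacobian right, and make sure the isometry holds exactly rather than only up to a constant. If the Jacobian is not $1$ under the paper's normalizations, I would check it directly on rank-one operators. For $S=g_1\otimes g_2$ and $T=f_1\otimes f_2$ one has $\gamma_{w,z}(S)=\pi(z)g_1\otimes\pi(w)g_2$, so
\[
Q_ST(w,z)=V_{g_1}f_1(z)\,\overline{V_{g_2}f_2(w)} .
\]
Moyal's identity on $L^2(\rd)$ applied in each variable then gives all three items for elementary tensors with constant exactly $1$. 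The general statement follows by sesquilinearity and density of finite-rank operators in $\S^2$, using the boundedness $|Q_ST(w,z)|\le\|T\|_{\S^2}\|S\|_{\S^2}$. In fact this rank-one route avoids the symbol computation entirely, and I would adopt it as the primary proof, keeping the Weyl-symbol identification for the subsequent concentration problem.
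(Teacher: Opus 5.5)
Your proof is correct. The paper does not prove this theorem itself; it refers to Section~3 of \cite{QTFA}. The only mechanism it displays is the Weyl-symbol identification quoted right afterwards (Corollary~3.15 of \cite{QTFA}), which is your secondary route. Your guess there is accurate: the Weyl symbol of $\pi(z)S\pi(w)^*$ is a unimodular factor depending only on $(w,z)$ times $M_\zeta T_x a_S$, with $x=(z+w)/2$ and $\zeta=\pm J(z-w)$. This change of variables on $\R^{4d}$ has unit Jacobian, consistent with $\det U=1$.

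Your primary route via elementary tensors, $Q_ST(w,z)=V_{g_1}f_1(z)\overline{V_{g_2}f_2(w)}$, is genuinely more elementary. It needs only Moyal's identity on $L^2(\rd)$ and Fubini, with no symbol calculus for products with time-frequency shifts and no Jacobian bookkeeping, at the price of a limiting argument. The symbol route costs a computation, but it yields the identification $Q_ST\simeq V_{a_S}a_T$ that the paper then uses for the concentration problem.

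Two remarks. First, the density step needs care. The uniform bound $|Q_ST(w,z)|\le\|T\|_{\S^2}\|S\|_{\S^2}$ does not by itself give continuity into $L^2(\R^{4d})$. What works is this: the identity for finite-rank operators makes $Q_{R_n}S_n$ Cauchy in $L^2(\R^{4d})$ when finite-rank $R_n\to R$ and $S_n\to S$ in $\S^2$, and the uniform bound then identifies the $L^2$-limit with $Q_RS$. You can also bypass density altogether. Write $z=(x_z,\xi_z)$ and $w=(x_w,\xi_w)$. The integral kernel of $\pi(z)S\pi(w)^*$ is the time-frequency shift on $\rdd$ of the kernel $k_S$ by $((x_z,x_w),(\xi_z,-\xi_w))$. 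Hence $Q_ST(w,z)=V_{k_S}k_T((x_z,x_w),(\xi_z,-\xi_w))$ exactly, and Moyal on $L^2(\rdd)$ together with this measure-preserving relabeling gives all three items. Your rank-one formula is the elementary-tensor case of this.

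Second, your convention remark is right and necessary. With the paper's definition, where the subscript is the window, $\langle Q_RS,Q_TW\rangle$ is linear in $S$, whereas the displayed right-hand side is conjugate-linear in $S$. The computation gives $\langle S,W\rangle_{\S^2}\overline{\langle R,T\rangle_{\S^2}}$, the complex conjugate of what is displayed. The first item therefore holds as written only with the roles of window and analyzed operator interchanged, as you propose. The second and third items are unaffected.
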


The properties above really suggest that the polarized Cohen's class behaves like an operator analogue of the STFT. It is therefore natural to study the corresponding concentration problem. Given an operator \(S\in \S^2\), \(p\in [1,\infty)\) and a set \(\Delta\subset\R^{4d}\) in double phase space with \(0<\abs{\Delta}<\infty\), we consider the supremum
\begin{align*}
    \sup_{T\in \S^2\setminus\{0\}}\frac{\left(\int_{\Delta}|Q_ST(w,z)|^p \dd{z} \dd{w}\right)^{1/p} }{\|T\|_{\S^2}}.
\end{align*}

We will reduce this problem to a concentration problem for the regular STFT. The key observation is the following.
\begin{corollary}[Corollary 3.15 of \cite{QTFA}]
    Let \(S,T\in \S^2\) and \(z,w\in \R^{2d}\). Then\begin{align}\label{Problem4}
        |Q_ST(z,w)|=|V_{a_S}a_T(U(w,z))|,
    \end{align}
    where \(U\) is the linear change of variables given by the matrix \begin{align*}
        U=\begin{pmatrix}
            O_d & -I_d & O_d & I_d \\ I_d & O_d & -I_d & O_d \\ \dfrac{1}{2}I_d & O_d & \dfrac{1}{2}I_d & O_d \\ O_d & \frac{1}{2}I_d & O_d & \frac{1}{2}I_d
        \end{pmatrix}, \qquad \det(U)=1. 
    \end{align*}
\end{corollary}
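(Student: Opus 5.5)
The plan is to rewrite the $\gamma$-shifted pairing on the Weyl-symbol side and recognize it as an STFT of $a_T$ with window $a_S$, up to a unimodular factor and the linear change of variables $U$. First I would record the symbol of a one-sided product with a time-frequency shift. The Weyl symbols of $\pi(z)S$ and $S\pi(w)^*$ are obtained from $a_S$ by a translation by half the shift and a modulation by a symplectic character, via the standard Moyal/twisted-product formula $\pi(z)=L_{\phi_z}$ with $\phi_z$ a (symplectic) plane-wave distribution. Concretely, the left factor should give
\[
a_{\pi(z)A}(u)=e^{\pi i [z,u]}\cdot(\text{phase})\,a_A\bigl(u-\tfrac{z}{2}\bigr),
\]
and the right factor $A\pi(w)^*$ an analogous expression with the opposite sign in the character. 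Composing the two, the symbol of $\gamma_{w,z}(S)$ comes out as a unimodular constant times
\[
e^{\pi i [z+w,u]}\,a_S\Bigl(u-\tfrac{z+w}{2}\Bigr),
\]
modulated by a character involving $z-w$.

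Next I would use Pool's theorem (unitarity of $T\mapsto a_T$ from $\S^2$ to $L^2(\R^{2d})$) to write
\[
Q_ST(w,z)=\langle a_T,a_{\gamma_{w,z}(S)}\rangle_{L^2(\R^{2d})}.
\]
Plugging in the computed symbol, this becomes a constant-modulus multiple of $\langle a_T,M_\eta T_x a_S\rangle=V_{a_S}a_T(x,\eta)$. Here the translation parameter is $x=\tfrac12(z+w)$, and the frequency parameter is $\eta=\pm J(z-w)$ (the character $e^{2\pi i[z-w,u]}$ written as $e^{2\pi i \eta\cdot u}$). Taking absolute values removes every phase. Writing $z=(z_1,z_2)$, $w=(w_1,w_2)$ and matching coordinates, one then checks that the map $(w,z)\mapsto(x,\eta)$ is exactly the matrix $U$ displayed. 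Its last two block rows give the average $\tfrac12(z+w)$, and its first two block rows give $J$ applied to the difference. The determinant equals $1$ by block elimination.

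The main obstacle I expect is the sign and ordering bookkeeping. The factor of $\tfrac12$ in the translations and the exact symplectic form of the modulation depend on the paper's convention $[u,z]=u\cdot Jz$ and on the order of the variables $(w,z)$ versus $(z,w)$ in the statement. To keep this under control, I would first verify everything on rank-one operators $S=g\otimes h$ and $T=f\otimes k$. There $Q_ST(w,z)=V_gf(z)\overline{V_hk(w)}$, and $a_{f\otimes k}=W(f,k)$, so the identity reduces to the known formula $V_{W(g,h)}W(f,k)$ in terms of $V_gf\cdot\overline{V_hk}$ evaluated at the mean and the rotated difference (Gröchenig's Lemma 4.3.1). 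That check fixes the conventions. The general case then follows by density of finite-rank operators in $\S^2$ and sesquilinear continuity of both sides.
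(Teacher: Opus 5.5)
Your overall route is the right one. By Pool's theorem $Q_ST(w,z)=\langle a_T,a_{\gamma_{w,z}(S)}\rangle_{L^2}$, and the Weyl symbol of $\pi(z)S\pi(w)^*$ is a time--frequency shift of $a_S$. This is also where the cited corollary of \cite{QTFA} comes from; the paper itself gives no proof and only quotes it.

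Your intermediate symbol formula is garbled, though. The one-sided rules are $a_{\pi(z)A}=c\,M_{Jz}T_{z/2}a_A$ and $a_{A\pi(w)^*}=c'\,M_{-Jw}T_{w/2}a_A$, with $|c|=|c'|=1$. Note the full frequency $Jz$; this follows from $W(\pi(z)f,\pi(w)h)=c\,M_{J(z-w)}T_{(z+w)/2}W(f,h)$. The $z+w$ parts of the modulations therefore cancel, and $a_{\gamma_{w,z}(S)}=c''\,M_{J(z-w)}T_{(z+w)/2}a_S$ with no residual factor $e^{\pi i[z+w,u]}$. Hence $Q_ST(w,z)=\overline{c''}\,V_{a_S}a_T\bigl(\tfrac{z+w}{2},J(z-w)\bigr)$, where $c''$ depends only on $(z,w)$. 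This is the $(x,\eta)$ you end up with, so that part is correct.

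The genuine gap is the last step, where you assert that $(w,z)\mapsto(x,\eta)$ ``is exactly the matrix $U$''. By your own reading of its block rows, $U(w,z)=\bigl(J(z-w),\tfrac{z+w}{2}\bigr)$. With the paper's convention $V_gf(x,\xi)=\langle f,\pi(x,\xi)g\rangle$, the average therefore lands in the frequency slot, not the translation slot. This is not a phase discrepancy. Take $S=T=\varphi_0\otimes\varphi_0$ and $z=w$: then $|Q_ST(z,z)|=e^{-\pi|z|^2}$, whereas $|V_{a_S}a_T(U(z,z))|=|V_{a_S}a_T(0,z)|=e^{-\pi|z|^2/4}$.

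There is a second mismatch on the left-hand side. The statement has $Q_ST(z,w)$, and $\gamma_{z,w}(S)=\pi(w)S\pi(z)^*$. Swapping the arguments thus replaces $J(z-w)$ by $J(w-z)$, which changes the modulus unless $a_S,a_T$ are real, i.e.\ unless $S,T$ are self-adjoint.

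So, with the paper's own conventions, your argument proves
\[|Q_ST(w,z)|=\bigl|V_{a_S}a_T\bigl(\tfrac{z+w}{2},J(z-w)\bigr)\bigr|.\]
This is the displayed formula with the two $2d$-block rows of $U$ interchanged (the determinant is still $1$) and with $(w,z)$ on the left. The displayed version is consistent only with a frequency-first ordering of the STFT variables together with the labelling $\gamma_{z,w}(S)=\pi(z)S\pi(w)^*$. You need to fix one convention and actually carry out the matching rather than assert it. The rank-one test via Gr\"ochenig's Lemma 4.3.1 that you propose would expose exactly this.

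Finally, in the density step, pass to the limit in the identity that carries the unimodular factor, which does not depend on $S,T$. Do not pass to the limit in the identity between moduli, since that identity is not sesquilinear and does not extend from rank-one operators by linearity.
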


The above corollary implies that the polarized Cohen's class is the STFT of the Weyl symbols, at least up to a phase factor and a change of variables. Pool's theorem therefore gives the following equivalence:\begin{align*}
     \sup_{T\in \S^2\setminus\{0\}}\frac{\left(\int_{\Delta}|Q_ST(w,z)|^p \dd{z} \dd{w}\right)^{1/p} }{\|T\|_{\S^2}}
    &=\sup_{T\in \S^2\setminus\{0\}}\frac{\left(\int_{\Delta}|V_{a_S}a_T(U(w,z))|^p \dd{z} \dd{w}\right)^{1/p} }{\|T\|_{\S^2}}\\
    &=\sup_{f\in L^2(\R^{2d})\setminus\{0\}}\frac{\left(\int_{\Delta}|V_{a_S}f(U(w,z))|^p \dd{z} \dd{w}\right)^{1/p} }{\|f\|_{L^2}}\\
    &=\sup_{f\in L^2(\R^{2d})\setminus\{0\}}\frac{\left(\int_{U(\Delta)}|V_{a_S}f(w,z)|^p \dd{z} \dd{w}\right)^{1/p} }{\|f\|_{L^2}}.
\end{align*}

By this equivalence, the existence of an optimizer to the polarized concentration problem is equivalent to the existence of an optimizer for the corresponding problem for the STFT concentration functional with fixed window $a_S$. We may now invoke the weak continuity of this final functional (see for instance \cite[Proposition 5.1]{Nicola-Romero-Trapasso-2022}) to conclude about the existence of optimizers.

\begin{corollary}
    Let \(p\in[1,\infty)\), \(\Delta\subset \R^{4d}\) be measurable with $0<|\Delta|<\infty$, and assume \(S\in \S^2\). Then the supremum
    \begin{align*}
        \sup_{T\in \S^2\setminus\{0\}}\frac{\left(\int_{\Delta }|Q_ST(w,z)|^p \dd{z} \dd{w}\right)^{1/p}}{\|T\|_{\S^2}}
    \end{align*}
    is attained. Furthermore, if $S \ne 0$ then any normalized maximizing sequence has a subsequence converging to a maximizer in \(\S^2\).
\end{corollary}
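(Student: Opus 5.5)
The plan is to transfer the problem, via the equivalence displayed just before the statement, to a fixed-window STFT concentration problem on $\R^{2d}$. Then I would apply the direct method, the same way as in the proof of Theorem~\ref{CompactExistence}.

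First I fix notation. Set $g \coloneqq a_S \in L^2(\R^{2d})$; Pool's theorem gives $\|g\|_{L^2}=\|S\|_{\S^2}$, and $T\mapsto a_T$ is a unitary map $\S^2\to L^2(\R^{2d})$. By the Corollary from \cite{QTFA}, $|Q_ST(w,z)|=|V_{g}a_T(U(w,z))|$. Since $\det U=1$, a change of variables turns the quotient into
\[ \frac{\|V_g f\|_{L^p(U(\Delta))}}{\|f\|_{L^2}}, \qquad f=a_T, \qquad |U(\Delta)|=|\Delta|\in(0,\infty). \]
It therefore suffices to prove two things. First, $J(f)\coloneqq\|V_gf\|_{L^p(U(\Delta))}$ is sequentially weakly continuous on $L^2(\R^{2d})$. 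Second, maximizing sequences are precompact. Both conclusions then transfer back to $\S^2$ through the unitary $T\mapsto a_T$, which preserves both weak and strong convergence.

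For weak continuity, let $f_n\rightharpoonup f$. For each $\zeta\in\R^{4d}$ we have $V_gf_n(\zeta)=\langle f_n,\pi(\zeta)g\rangle\to V_gf(\zeta)$. We also have the uniform bound $|V_gf_n(\zeta)|\le \sup_n\|f_n\|_{L^2}\|g\|_{L^2}$, and this is integrable to the power $p$ on the finite-measure set $U(\Delta)$. Dominated convergence then gives $V_gf_n\to V_gf$ in $L^p(U(\Delta))$. This is exactly \cite[Proposition 5.1]{Nicola-Romero-Trapasso-2022}, so I would just cite it.

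For attainment, assume $S\ne0$, since otherwise every $T$ is trivially optimal. Take a normalized maximizing sequence $f_n$ and extract $f_n\rightharpoonup f$ with $\|f\|\le1$. By weak continuity, $J(f)$ equals the supremum $\Lambda$. Next I need $\Lambda>0$: $V_gf$ is continuous and $V_g(\pi(\zeta_0)g)(\zeta_0)$ has modulus $\|g\|^2\neq0$, so choosing $\zeta_0$ to be a density point of $U(\Delta)$ gives $J>0$ for some input. Hence $f\ne0$. Now use homogeneity of degree one: $J(f/\|f\|)=J(f)/\|f\|>\Lambda$ whenever $\|f\|<1$, which is a contradiction. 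So $\|f\|=1$, and weak convergence plus convergence of norms gives strong convergence. This matches the last step of Theorem~\ref{CompactExistence}, except that the homogeneity here is linear rather than quadratic.

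I expect no real obstacle. The only delicate point is to justify carefully that the measure-preserving change of variables and Pool's unitary correspondence carry maximizers and convergent subsequences back to $\S^2$, and that the phase factor dropped in the Corollary is harmless because only moduli appear in the functional.
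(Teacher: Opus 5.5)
Your proposal is correct and follows the paper's route. Pool's theorem and the modulus identity $|Q_ST|=|V_{a_S}a_T\circ U|$, with the volume-preserving $U$, reduce the problem to the fixed-window STFT concentration problem on $U(\Delta)$, where weak continuity (\cite[Proposition 5.1]{Nicola-Romero-Trapasso-2022}) and the direct method apply; you also spell out what the paper leaves implicit, namely positivity of the supremum and the norm-one argument that gives strong convergence of maximizing subsequences, exactly as in the proof of Theorem~\ref{CompactExistence}.
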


As a concluding remark, let us also note that since the two problems are equivalent, any result about the STFT concentration problem carries over to a result on the polarized Cohen's class. Consequently, results like Daubechies' theorem~\cite{Daubechies} and the Faber-Krahn inequality for the STFT~\cite{FaberKrahn} also hold true for the polarized Cohen's class, at least up to the volume-preserving change of variables \(U\).

\appendix 
	\section{Strict-gap criteria for the Wigner window} \label{app-wig}
    
In this appendix we record some situations in which the Wigner window \(S_{\mathrm{W}}=2^dP\) belongs to the strict-gap class \(\cG\). Throughout the section we assume as usual \(1\le p<\infty\), \(0<\abs{\Omega}<\infty\), with \(W(f,g)(z) = Q_{S_{\mathrm{W}}}(f,g)(z) \) and \(Wf\coloneqq W(f,f)\). 
Before entering the details, let us first emphasize that belonging to \(\cG\) rules out the occurrence of exotic optimizers in the sense of \cite{Stra-Svela-Trapasso-2025}, since no normalized weakly null sequence can be maximizing for \(\Lambda_{p,\Omega}(S_{\mathrm W})\). Indeed, by definition of the essential threshold, 
\[\limsup_{n\to\infty}\|Wf_n\|_{L^p(\Omega)}\le \Lambda^{\mathrm{ess}}_{p,\Omega}(S_{\mathrm W})<\Lambda_{p,\Omega}(S_{\mathrm W})\]
for every sequence \(f_n\rightharpoonup0\) with \(\|f_n\|_{L^2}=1\). As a consequence, the exotic escaping profiles possibly occurring in the Wigner concentration-compactness analysis carried out in \cite{Stra-Svela-Trapasso-2025} cannot attain the optimal value in the strict-gap regime. Equivalently, every maximizing sequence must contain at least one nontrivial compact profile.

Define \[M_{p,\Omega}(S_{\mathrm{W}})\coloneqq \sup_{\norm{u}_{L^2}=\norm{v}_{L^2}=1}\norm{W(u,v)}_{L^p(\Omega)}\] and \[C_p\coloneqq \left(\frac1{2\pi}\int_0^{2\pi}\abs{\cos\theta}^p\dd{\theta}\right)^{1/p}.\] 
Note that \(0<C_p<1\) for every finite \(p\). The main asymptotic estimates from \cite{Stra-Svela-Trapasso-2025} can be summarized as follows:
\[\Lambda^{\mathrm{ess}}_{p,\Omega}(S_{\mathrm{W}})\le C_pM_{p,\Omega}(S_{\mathrm{W}}).\] 
To be precise, this is the essential-threshold bound obtained by applying the profile decomposition of \cite[Lemma~2.3]{Stra-Svela-Trapasso-2025} to a normalized weakly null sequence, using the cross-Wigner covariance formula \cite[Lemma~2.1]{Stra-Svela-Trapasso-2025}, and then applying the antipodal cross-term asymptotic \cite[Theorem~3.1]{Stra-Svela-Trapasso-2025}. 

The strict gap condition is then 
\[C_pM_{p,\Omega}(S_{\mathrm{W}})<\Lambda_{p,\Omega}(S_{\mathrm{W}}) \implies S_{\mathrm{W}}\in\cG. \]
Note that, since the diagonal choice \(u=v\) is allowed in the definition of \(M_{p,\Omega}(S_{\mathrm{W}})\), one always has \(M_{p,\Omega}(S_{\mathrm{W}})\ge \Lambda_{p,\Omega}(S_{\mathrm{W}})\). The simple sufficient condition \(M_{p,\Omega}(S_{\mathrm{W}})\le \Lambda_{p,\Omega}(S_{\mathrm{W}})\) thus coincides with the equality \(M_{p,\Omega}(S_{\mathrm{W}})=\Lambda_{p,\Omega}(S_{\mathrm{W}})\).

We also record that, combining the elementary $L^\infty$ bound for the Wigner transform with Lieb's $L^p$ uncertainty for $p\ge2$, we immediately obtain
\begin{equation}\label{eq:PointwiseAndLieb}
    M_{p,\Omega}(S_{\mathrm{W}})\le 2^d\min\{\abs{\Omega}^{1/p},(2p)^{-d/p}\}, 
\end{equation}
where we computed $L_{p,d}(S_{\mathrm{W}})\coloneqq \norm{W\varphi_0}_{L^p}=2^d(2p)^{-d/p}$ and \(\varphi_0\) is the usual normalized Gaussian, giving $|W\varphi_0(z)|=2^d e^{-2\pi |z|^2}$. 

Let us start with a simple consequence of continuity for sufficiently small sets. 
\begin{prop}\label{prop:wigner-small-sets}
Let \(1\le p<\infty\), \(z_0\in\mathbb R^{2d}\), and choose \(a>0\) satisfying \(C_p<a<1\). Then there exists \(r>0\) such that, whenever \(0<\abs{\Omega}<\infty\) and \(\Omega\subset B(z_0,r)\), one has $S_{\mathrm W}\in\mathcal G(p,\Omega)$. 
\end{prop}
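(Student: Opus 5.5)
Our plan is to verify the strict gap condition $C_pM_{p,\Omega}(S_{\mathrm W})<\Lambda_{p,\Omega}(S_{\mathrm W})$ recorded above. We will bound $M_{p,\Omega}$ from above using the pointwise $L^\infty$ estimate, and bound $\Lambda_{p,\Omega}$ from below using a single test function whose Wigner distribution is close to its maximal value on the small ball.

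For the upper bound, we use $\abs{W(u,v)(z)}\le 2^d\norm{u}_{L^2}\norm{v}_{L^2}$. This holds because $W(u,v)(z)=2^d\langle \alpha_z(P)u,v\rangle$ up to the convention of the polarization, and $\alpha_z(P)$ is unitary. It gives
\[
M_{p,\Omega}(S_{\mathrm W})\le 2^d\abs{\Omega}^{1/p}
\]
for every $\Omega$, which is the first entry of \eqref{eq:PointwiseAndLieb}.

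For the lower bound, we take the shifted Gaussian $f=\pi(z_0)\varphi_0$. By covariance,
\[
Wf(z)=W\varphi_0(z-z_0)=2^de^{-2\pi\abs{z-z_0}^2}.
\]
We choose $r>0$ so that $e^{-2\pi r^2}\ge a$, i.e.\ $r=\sqrt{\log(1/a)/(2\pi)}$, which is positive because $a<1$. Then for every measurable $\Omega\subset B(z_0,r)$ we have $Wf\ge 2^da$ on $\Omega$, hence
\[
\Lambda_{p,\Omega}(S_{\mathrm W})\ge \norm{Wf}_{L^p(\Omega)}\ge 2^da\abs{\Omega}^{1/p}.
\]

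Combining the two bounds with $\Lambda^{\mathrm{ess}}_{p,\Omega}(S_{\mathrm W})\le C_pM_{p,\Omega}(S_{\mathrm W})$ gives
\[
\Lambda^{\mathrm{ess}}_{p,\Omega}(S_{\mathrm W})\le C_p2^d\abs{\Omega}^{1/p}<a\,2^d\abs{\Omega}^{1/p}\le\Lambda_{p,\Omega}(S_{\mathrm W}).
\]
The middle inequality is strict because $C_p<a$ and $\abs{\Omega}>0$. This is exactly the strict gap, so $S_{\mathrm W}\in\mathcal G(p,\Omega)$.

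The main point requiring care is the essential-threshold bound $\Lambda^{\mathrm{ess}}_{p,\Omega}\le C_pM_{p,\Omega}$, which we import from the profile decomposition of \cite{Stra-Svela-Trapasso-2025} as recorded above. We should check that it holds for arbitrary finite-measure $\Omega$, with no dependence on the size of $\Omega$. The other thing to confirm is the normalization of the polarized Wigner bound, namely that the constant is $2^d$ with no extra factor. Everything else is explicit, and $r$ depends only on $a$, not on $z_0$ or $p$.
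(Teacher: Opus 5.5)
Your proposal is correct and is essentially the paper's proof. You take a test function centered at $z_0$ with $Wf(z_0)=2^d$, which gives $\Lambda_{p,\Omega}(S_{\mathrm W})\ge 2^d a\abs{\Omega}^{1/p}$ on a small ball. The pointwise bound gives $C_pM_{p,\Omega}(S_{\mathrm W})\le 2^dC_p\abs{\Omega}^{1/p}$, and the strict-gap criterion then concludes. The only difference is that you fix $f=\pi(z_0)\varphi_0$ and compute $r=\sqrt{\log(1/a)/(2\pi)}$ explicitly, whereas the paper takes any translated even function and obtains $r$ by continuity.
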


\begin{proof}
Choose a normalized eigenvector \(f\) of the displaced parity \(\alpha_{z_0}(P)\) with eigenvalue \(1\), \(\pi(z)f\) for any normalized even \(f\) will do, so that $Wf(z_0)=2^d$. Since \(z\mapsto Wf(z)\) is continuous, there exists \(r>0\) such that $\abs{Wf(z)}\ge 2^da$ for all $z\in B(z_0,r)$. If \(\Omega\subset B(z_0,r)\), then
\[\Lambda_{p,\Omega}(S_{\mathrm W})\ge 2^da\abs{\Omega}^{1/p}>2^dC_p\abs{\Omega}^{1/p}\ge C_pM_{p,\Omega}(S_{\mathrm W}), \]
and the claim follows. 
\end{proof}

The gap criterion can be enforced also by imposing bound on the captured Gaussian mass. 
\begin{prop}\label{prop:wigner-gaussian-mass}
Let \(p\ge2\) and set
\[I_\Omega\coloneqq \int_\Omega \abs{W\varphi_0(z)}^p\dd{z}.\]
 If \[\max\left\{\frac{I_\Omega}{2^{dp}\abs{\Omega}},\frac{I_\Omega}{2^{dp}(2p)^{-d}}\right\}>C_p^p,\] then \(S_{\mathrm{W}}\in\cG\).
\end{prop}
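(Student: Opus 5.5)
The plan is to verify the strict gap condition \(C_pM_{p,\Omega}(S_{\mathrm W})<\Lambda_{p,\Omega}(S_{\mathrm W})\), which by the discussion above already gives \(S_{\mathrm W}\in\cG\). We use the normalized Gaussian \(\varphi_0\) as a test function for the lower bound and \eqref{eq:PointwiseAndLieb} for the upper bound.

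\textbf{Lower bound.} Since \(\norm{\varphi_0}_{L^2}=1\), the definition of \(\Lambda_{p,\Omega}\) gives
\[
\Lambda_{p,\Omega}(S_{\mathrm W})\ge \norm{W\varphi_0}_{L^p(\Omega)}=I_\Omega^{1/p}.
\]

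\textbf{Upper bound.} Because \(p\ge 2\), Lieb's inequality is available. Then \eqref{eq:PointwiseAndLieb} yields
\[
C_p^pM_{p,\Omega}(S_{\mathrm W})^p\le C_p^p\,2^{dp}\min\{\abs{\Omega},(2p)^{-d}\}.
\]

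\textbf{Comparison.} It suffices to show
\[
I_\Omega> C_p^p\,2^{dp}\min\{\abs{\Omega},(2p)^{-d}\}.
\]
Write \(m\coloneqq\min\{\abs{\Omega},(2p)^{-d}\}\). The largest of the two ratios in the hypothesis is exactly \(I_\Omega/(2^{dp}m)\), because dividing by the smaller denominator produces the larger quotient. Hence the hypothesis \(\max\{\cdot\}>C_p^p\) says precisely \(I_\Omega/(2^{dp}m)>C_p^p\), which is the required inequality. Taking \(p\)-th roots gives
\[
C_pM_{p,\Omega}(S_{\mathrm W})<I_\Omega^{1/p}\le\Lambda_{p,\Omega}(S_{\mathrm W}),
\]
and the strict gap criterion applies.

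The argument involves no real obstacle. The only points to check are the restriction \(p\ge2\), which is needed for the Lieb branch of \eqref{eq:PointwiseAndLieb}, and the elementary identification of the maximum of the two ratios with the ratio taken against the minimum. Everything substantive is carried by the essential bound \(\Lambda^{\mathrm{ess}}_{p,\Omega}(S_{\mathrm W})\le C_pM_{p,\Omega}(S_{\mathrm W})\) imported from \cite{Stra-Svela-Trapasso-2025}, together with Proposition~\ref{Prop:StrictGap}.
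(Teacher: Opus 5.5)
Your proposal is correct and follows the paper's proof. You test with $\varphi_0$ to get $\Lambda_{p,\Omega}(S_{\mathrm W})^p\ge I_\Omega$, bound $M_{p,\Omega}(S_{\mathrm W})^p$ via \eqref{eq:PointwiseAndLieb}, and rewrite the max-of-ratios hypothesis as $I_\Omega>C_p^p\min\{2^{dp}\abs{\Omega},2^{dp}(2p)^{-d}\}$ before invoking the strict-gap criterion, exactly as the paper does.
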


\begin{proof}
The assumption is equivalent to
\[ I_\Omega > C_p^p\min\{2^{dp}\abs{\Omega},2^{dp}(2p)^{-d}\}, \] therefore
\[ \Lambda_{p,\Omega}(S_{\mathrm W})^p \ge I_\Omega > C_p^pM_{p,\Omega}(S_{\mathrm W})^p. \]
Taking \(p\)-th roots yields $\Lambda_{p,\Omega}(S_{\mathrm W}) > C_pM_{p,\Omega}(S_{\mathrm W})$ so the strict-gap criterion allows one to conclude. 

\end{proof}

Finally, let us consider now the ball \(B_R=B(0,R)\subset\R^{2d}\). After setting $ x=2\pi pR^2$, define
\[A_d(x) \coloneqq \frac{d\,\gamma(d,x)}{x^d}, \qquad F_d(x) \coloneqq \frac{\gamma(d,x)}{\Gamma(d)},\]
where $\gamma(d,x)=\int_0^x t^{d-1}e^{-t} \dd{t}$. We thus infer 
\[ \|W\varphi_0\|_{L^p(B_R)}^p = 2^{dp}|B_R|A_d(x) = 2^{dp}(2p)^{-d}F_d(x). \]
Moreover, note that 
\[ A_d(x)\to1\quad\text{as }x\downarrow0, \qquad F_d(x)\to1\quad\text{as }x\to\infty, \]
and a straightforward computation shows that
\[m_d\coloneqq \inf_{x>0}\max\{A_d(x),F_d(x)\}=F_d((d!)^{1/d})=\frac{\gamma(d,(d!)^{1/d})}{\Gamma(d)}.\]
We have enough material to show that $S_\mathrm{W}$ is in the gap class at least for sufficiently small or large balls. 

\begin{prop}\label{prop:cpmd}
For \(d\ge1\) and \(p\ge2\), we have
\[C_p^p<m_d \implies S_{\mathrm W}\in\mathcal G(p,B_R) \quad \forall R>0.\]
\end{prop}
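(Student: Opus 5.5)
The plan is to reduce the statement to Proposition~\ref{prop:wigner-gaussian-mass} with \(\Omega=B_R\), testing \(\Lambda_{p,B_R}(S_{\mathrm W})\) only against the centered Gaussian \(\varphi_0\). That proposition says \(S_{\mathrm W}\in\mathcal G(p,B_R)\) as soon as
\[\max\left\{\frac{I_{B_R}}{2^{dp}\abs{B_R}},\frac{I_{B_R}}{2^{dp}(2p)^{-d}}\right\}>C_p^p, \qquad I_{B_R}=\norm{W\varphi_0}_{L^p(B_R)}^p.\]
The first step is to plug in the explicit formula recorded just before the statement. Since \(\abs{W\varphi_0(z)}^p=2^{dp}e^{-2\pi p\abs{z}^2}\), we use polar coordinates in \(\R^{2d}\) with \(t=2\pi p\abs{z}^2\) and get
\[ \frac{I_{B_R}}{2^{dp}\abs{B_R}}=A_d(x),\qquad \frac{I_{B_R}}{2^{dp}(2p)^{-d}}=F_d(x),\qquad x=2\pi pR^2. \]
I would briefly re-verify this, using \(\abs{B_R}=\pi^dR^{2d}/d!\) and \(\int_{B_R}e^{-2\pi p\abs z^2}\dd z=(2p)^{-d}\gamma(d,x)/\Gamma(d)\). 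So the hypothesis of Proposition~\ref{prop:wigner-gaussian-mass} reads \(\max\{A_d(x),F_d(x)\}>C_p^p\).

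The second step is uniformity in \(R\). As \(R\) ranges over \((0,\infty)\), \(x\) ranges over \((0,\infty)\), so
\[\max\{A_d(x),F_d(x)\}\ge \inf_{y>0}\max\{A_d(y),F_d(y)\}=m_d>C_p^p\]
for every \(R>0\). Applying Proposition~\ref{prop:wigner-gaussian-mass} then gives \(S_{\mathrm W}\in\mathcal G(p,B_R)\) for all \(R\); the restriction \(p\ge2\) is needed only for Lieb's bound entering \eqref{eq:PointwiseAndLieb}.

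The main point to justify is the identity \(m_d=F_d((d!)^{1/d})\), which the paper calls a straightforward computation; strictly, the argument only needs \(m_d\) as the infimum. For completeness I would argue as follows. \(F_d\) is increasing. \(A_d(x)=d\,x^{-d}\int_0^x t^{d-1}e^{-t}\dd t\) is the average of \(e^{-t}\) against the probability density \(d\,t^{d-1}/x^d\) on \([0,x]\), which stochastically increases with \(x\), so \(A_d\) is decreasing. Hence the infimum of the maximum is attained at the crossing point \(A_d=F_d\), i.e.\ \(d/x^d=1/\Gamma(d)\), which gives \(x^d=d\,\Gamma(d)=d!\). At that point the common value is \(F_d((d!)^{1/d})\). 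The remaining work is checking the two monotonicity claims.
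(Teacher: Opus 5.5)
Your proof is correct and takes the same route as the paper. The paper inlines the argument of Proposition~\ref{prop:wigner-gaussian-mass}, comparing $U_RA_d(x)=VF_d(x)$ with $C_p^p\min\{U_R,V\}$, rather than citing it. The ingredients are the same: the Gaussian lower bound, \eqref{eq:PointwiseAndLieb}, and $\max\{A_d(x),F_d(x)\}\ge m_d$. Your monotonicity argument identifying $m_d=F_d((d!)^{1/d})$ is also correct, but as you note it is not needed.
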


\begin{proof}
Let us start by setting $U_R=2^{dp}\abs{B_R},$ $V=2^{dp}(2p)^{-d}$ for simplicity, so that \(U_RA_d(x)=VF_d(x)\). As a result,
\[ \Lambda_{p,B_R}(S_{\mathrm W})^p \ge  U_RA_d(x) = VF_d(x), \]
while Equation~\eqref{eq:PointwiseAndLieb} yields $M_{p,B_R}(S_{\mathrm W})^p\le \min\{U_R,V\}$. If \(C_p^p<m_d\), after setting \(c=C_p^p\) we obtain
\[\max\{A_d(x),F_d(x)\}\ge m_d>c,\]
so at least one of \(A_d(x)\) and \(F_d(x)\) is strictly larger than \(c\). If \(A_d(x)>c\) then
\(U_RA_d(x)>cU_R\ge c\min\{U_R,V\}\), whereas if \(F_d(x)>c\) then \(VF_d(x)>cV\ge c\min\{U_R,V\}\). Since \(U_RA_d(x)=VF_d(x)\), in either case we obtain
\[\Lambda_{p,B_R}(S_{\mathrm W})^p>C_p^p\min\{U_R,V\}\ge C_p^pM_{p,B_R}(S_{\mathrm W})^p.\]
The claim follows by the strict-gap criterion after taking \(p\)-th roots. 

\end{proof}

\begin{corollary}\label{cor:wigner-collected-cases}
The following results hold for the Wigner window \(S_{\mathrm W}=2^dP\).

\begin{enumerate}

\item For every \(d\ge1\) and \(p\ge2\), one has \(S_{\mathrm{W}}\in\mathcal G(p,B_R)\) for all sufficiently small and large \(R>0\).

\item For every fixed \(d\ge1\), there exists \(p_d<\infty\) such that whenever \(p\ge2\) and \(p>p_d\), \[p > \max\{2,p_d\} \implies S_{\mathrm{W}}\in\mathcal G(p,B_R) \quad \forall R>0.\] 
The same conclusion holds for every translated symplectic ball, i.e. images of the ball under linear symplectic maps, 
\[z_0+A(B_R)\qquad \text{with }\quad  z_0\in\rdd,\quad A\in\mathrm{Sp}(2d,\mathbb R).\]

\end{enumerate}
\end{corollary}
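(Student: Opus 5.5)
Both items will be deduced from Proposition~\ref{prop:cpmd} together with the strict-gap criterion behind it, namely that $\Lambda_{p,B_R}(S_{\mathrm W})^p\ge 2^{dp}|B_R|A_d(x)=2^{dp}(2p)^{-d}F_d(x)$ while $M_{p,B_R}(S_{\mathrm W})^p\le\min\{2^{dp}|B_R|,2^{dp}(2p)^{-d}\}$, with $x=2\pi pR^2$.

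For item (1), fix $d\ge1$ and $p\ge2$, and set $c=C_p^p<1$. Since $A_d(x)\to1$ as $x\downarrow0$ and $F_d(x)\to1$ as $x\to\infty$, there are $x_0>0$ and $x_1<\infty$ such that $A_d(x)>c$ for $x<x_0$ and $F_d(x)>c$ for $x>x_1$. Translating back via $x=2\pi pR^2$, small $R$ gives $\Lambda^p\ge 2^{dp}|B_R|A_d(x)>c\,2^{dp}|B_R|\ge c M^p$. For large $R$ we get instead $\Lambda^p\ge 2^{dp}(2p)^{-d}F_d(x)>c\,2^{dp}(2p)^{-d}\ge cM^p$. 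Proposition~\ref{prop:strict-gap}, in the form $C_pM<\Lambda$ implies $S_{\mathrm W}\in\cG$, then concludes. This is essentially the case analysis already carried out in the proof of Proposition~\ref{prop:cpmd}, applied one branch at a time without needing $m_d$.

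For item (2), I would show that $C_p^p\to0$ as $p\to\infty$. Since $|\cos\theta|<1$ off a null set, dominated convergence gives $\frac1{2\pi}\int_0^{2\pi}|\cos\theta|^p\dd\theta\to0$; an explicit Wallis-type asymptotic $\sim (2/(\pi p))^{1/2}$ is also available. The quantity $m_d=\gamma(d,(d!)^{1/d})/\Gamma(d)$ is a fixed positive number depending only on $d$. Hence there is $p_d$ with $C_p^p<m_d$ for $p>p_d$, and Proposition~\ref{prop:cpmd} gives the claim for all $R>0$ whenever $p>\max\{2,p_d\}$. One should double-check that $m_d>0$, which is immediate since $\gamma(d,y)>0$ for $y>0$.

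For the translated symplectic balls, I would use symplectic covariance and the covariance of the Wigner distribution. Concretely, $W(\mu(A)\pi(z_0)f)(z)=Wf(A^{-1}(z-z_0))$ up to the standard conventions, and $\mu(A)\pi(z_0)$ is unitary on $L^2$ and preserves weak nullity, since it is a fixed unitary. Because $\det A=1$, the change of variables shows that $\|Wg\|_{L^p(z_0+A(B_R))}=\|Wf\|_{L^p(B_R)}$ with $g=\pi(z_0)\mu(A)f$, or the appropriately ordered composition. The same holds for $M_{p,\Omega}$ through the cross-Wigner covariance, and for $\Lambda^{\mathrm{ess}}$ because the unitary maps weakly null normalized sequences bijectively to such sequences. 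So $\Lambda$ and $\Lambda^{\mathrm{ess}}$ are invariant and membership in $\cG$ transfers. The main point requiring care is this invariance of $\Lambda^{\mathrm{ess}}$, and checking the precise ordering and signs in the metaplectic covariance formula; the rest is routine.
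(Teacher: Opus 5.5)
Your proposal is correct and matches the paper's proof: item (1) is the Gaussian-mass criterion of Proposition~\ref{prop:wigner-gaussian-mass}, which you reproduce inline one branch at a time. Item (2) is $C_p^p\to0$ (you use dominated convergence, the paper uses the explicit Gamma-function formula) combined with Proposition~\ref{prop:cpmd}, and the passage to $z_0+A(B_R)$ is the same invariance of $\Lambda_{p,\Omega}$ and $\Lambda^{\mathrm{ess}}_{p,\Omega}$ under the fixed unitary $\pi(z_0)\mu(A)$, which is the correct ordering, as you suspected. Your extra invariance of $M_{p,\Omega}$ is harmless but not needed, since membership in $\mathcal G$ only involves $\Lambda$ and $\Lambda^{\mathrm{ess}}$.
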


\begin{proof}

For the first item, we resort to the Gaussian mass criterion in Proposition \ref{prop:wigner-gaussian-mass}. Recall the asymptotic behaviors
\[ A_d(x)\to1\quad (x\downarrow0), \qquad F_d(x)\to1\quad (x\to\infty). \]
If \(R\downarrow0\), then \(x\downarrow0\). Since \(C_p<1\), for all sufficiently small \(R\) we have $A_d(x)>C_p^p$, hence 
\[ \frac{\|W\varphi_0\|_{L^p(B_R)}^p}{2^{dp}|B_R|} = A_d(x) > C_p^p. \]

If \(R\to\infty\), then \(x\to\infty\). Again since \(C_p<1\), for all sufficiently large \(R\) we have $F_d(x)>C_p^p$, therefore 
\[ \frac{\|W\varphi_0\|_{L^p(B_R)}^p}{2^{dp}(2p)^{-d}}  = F_d(x) > C_p^p.\]

For the second item, note that 
\[ C_p^p = \frac1{2\pi}\int_0^{2\pi}|\cos\theta|^p\,d\theta = \frac{\Gamma((p+1)/2)}{\sqrt\pi\,\Gamma((p+2)/2)} \to0 \qquad  (p\to\infty). 
\] Therefore, there exists \(p_d<\infty\) such that $C_p^p<m_d$ for every \(p>p_d\), and thus the claim follows by Proposition~\ref{prop:cpmd}.

Finally, the passage from balls to translated symplectic balls is a consequence of the Wigner distribution being covariant under time frequency shifts and metaplectic operators, that is $\abs{W(\pi(z_0)f)(z)}=\abs{Wf(z-z_0)}$ and $\abs{W(\mu(A)f)(z)}=\abs{Wf(A^{-1}z)}$. In particular, since \(A\) is volume preserving and both \(\pi(z_0)\) and \(\mu(A)\) are unitary, these transformations preserve normalized weak convergence, and thus both \(\Lambda_{p,\Omega}(S_{\mathrm W})\) and \(\Lambda^{\mathrm{ess}}_{p,\Omega}(S_{\mathrm W})\) are invariant under replacing \(\Omega\) by \(z_0+A(\Omega)\). 
\end{proof}

\section{Concentration problems beyond the Heisenberg representation} \label{app-eucl}

In this appendix we elaborate on the direction first highlighted in Remark \ref{rem-eucl}, and slightly extend our analysis beyond the standard Euclidean setting. 

Throughout this section we consider a second countable locally compact group \(\Xi\) equipped with a left Haar measure \(\mu_\Xi\), while \(\Omega\subset\Xi\) is a measurable set with \(0<\mu_\Xi(\Omega)<\infty\). \(\cH\) is a complex separable Hilbert space, while \(\rho \colon \Xi\to\mathcal U(\cH) \) is a strongly continuous unitary representation. Given \(\xi\in\Xi\), we write \( \alpha_\xi(S)\coloneqq \rho(\xi)S\rho(\xi)^* \) for the induced action on \( \cB(\cH)\) --- note that it is not a problem to consider projective representations, since they cause no ambiguity in the definition of \(\alpha_\xi(S)\). Here \(\Xi\) plays the role of phase space or parameter space: In the ordinary locally compact abelian time-frequency setting over a signal group \(G_0\), one has \(\Xi=G_0\times\widehat{G_0}\), while in the Euclidean case \(G_0=\R^d\) we recover \(\Xi=\R^{2d}\). On the other hand, in the affine wavelet setting we have \(\Xi=\Xi_{\mathrm{aff}}=\R\rtimes\R_+\) (see below). 

In Remark \ref{rem-eucl} we have already mentioned that the arguments in the proof of Theorem \ref{CompactExistence} extend to the more general setting where the Cohen-type transform with operator window \(S \in \cB(\cH)\) is defined by
\[ Q_S f(\xi)\coloneqq \langle \alpha_\xi(S)f,f\rangle_{\cH}, \qquad f \in \cH. \] 
In fact, with obvious adaptation of the notation introduced in Section \ref{sec-structure}, most of the structural classification is robust beyond the Euclidean Heisenberg setting. For $1 \le p < \infty$ we have indeed
\[ \cK(\cH) =  \cBpt \subseteq \cBwtn = \cBwc =  \ker\Lambda^{\mathrm{ess}}_{p,\Omega} \subseteq \cBsup, \]
and the strict-gap criterion
\[ \Lambda_{p,\Omega}(S)>\Lambda^{\mathrm{ess}}_{p,\Omega}(S) \quad \implies \quad S \in \cBsup. \]
The whole $L^\infty$ concentration analysis extends as well, in particular we have
\[ \Lambda_{\infty,\Omega}(S)=w(S) \quad \forall\ S \in \cB(\cH), \qquad \NRA(\cH) \subseteq \cB_{\mathrm{sup}}(\infty,\Omega).\]
Let us here point out that the Wigner transform is inherently Heisenberg-dependent, since at this level of generality there is no canonical notion of parity unless the representation is symmetric under phase-space inversion. This is another argument in support of a QHA approach, since studying the Cohen class distributions from this angle does not suffer from such limitations.

It is therefore interesting to further investigate the negative results, where one expects a strong dependence on the representation. Having in mind the short-time Fourier transform and the ambiguity setting in \cite{Nicola-Romero-Trapasso-2022}, in line with the spirit of coorbit theory \cite{coorbit1,coorbit2} we introduce here the \textit{voice transform} of $f \in \cH$ with window $g \in \cH\smo$: 
\[ V_g f(\xi)\coloneqq \langle f,\rho(\xi)g\rangle_{\cH},\qquad \xi\in\Xi. \]
It is straightforward to show that the correspondence $f \mapsto V_gf$ is weak-to-norm continuous from bounded sets of $\cH$ to $L^p(\Omega)$, so concentration optimizers do exist by the direct method. The same argument turns out to readily extend to fixed-window generalized Husimi functions of the form
\[ C_R T(\xi)\coloneqq \langle T,\alpha_\xi(R)\rangle_{\cS^2(\cH)},\qquad T\in\cS^2(\cH), \]
where \(R\in\cS^2(\cH)\smo\) is fixed. Note however that questions beyond Hilbert--Schmidt regularity are much more delicate, especially in non-unimodular settings, as they are connected to notions like admissibility \cite{BBLS22} or Duflo--Moore conditions. 

The difficult problem, as one might expect, is the one concerning the diagonal vector coefficient of $\rho$, that is the autovoice (``ambiguity'') transform
\[ A_\rho f(\xi)\coloneqq \langle f,\rho(\xi)f\rangle_{\cH}, \qquad f \in \cH, \,\ \xi \in \Xi. \] 
Note that, by Cauchy--Schwarz, we have $|A_\rho f(\xi)| \le \|f\|_{\cH}^2$, which in turn yields the universal concentration bound
\[ \|A_\rho f\|_{L^p(\Omega)}\le \mu_{\Xi}(\Omega)^{1/p}, \qquad \|f\|_{\cH}=1. \]
The Heisenberg ambiguity function is a positive special case of this diagonal problem, in light of \cite{Nicola-Romero-Trapasso-2022}, but inspecting the delicate proof suggests that analogous conclusions should not be expected in general. Indeed, this concentration problem seems governed not by compactness of a fixed window: If \(\rho\) contains a one-dimensional invariant subrepresentation, then the universal bound is attained on the corresponding unit vector, and conversely (in connected groups) any exact optimizer for the sharp bound forces such a one-dimensional subrepresentation. The following result makes this mechanism precise.

\begin{prop} \label{prop:DiagonalWeakContainment}
Let \(\Xi\) be a second countable locally compact group with left Haar measure \(\mu_\Xi\), and let \(\rho \colon \Xi\to\mathcal U(\cH)\) be a strongly continuous unitary representation. Assume that there exists a continuous homomorphism \(\chi \colon \Xi\to\mathbb T\) such that, for every compact set \(K\subset\Xi\) and every \(\varepsilon>0\), there is a unit vector \(f\in\cH\) satisfying
\[\sup_{\xi\in K}|A_\rho f(\xi)-\chi(\xi)|<\varepsilon.\]
Then, for every \(1\le p<\infty\) and every measurable set \(\Omega\subset\Xi\) with \(0<\mu_\Xi(\Omega)<\infty\), one has
\[\sup_{\|f\|_{\cH}=1}\|A_\rho f\|_{L^p(\Omega)}=\mu_\Xi(\Omega)^{1/p}.\]
In addition, if \(\Xi\) is connected and \(\rho\) contains no one-dimensional invariant subrepresentation, then the supremum is not attained.
\end{prop}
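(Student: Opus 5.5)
The proof has two parts. First I establish the value of the supremum by a sandwich argument. Then I rule out attainment by showing that an exact optimizer would span a one-dimensional invariant subspace.

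\textbf{The value.} The upper bound $\|A_\rho f\|_{L^p(\Omega)}\le\mu_\Xi(\Omega)^{1/p}$ is the universal Cauchy--Schwarz bound $|A_\rho f(\xi)|\le\|f\|_{\cH}^2$ recorded above. For the lower bound I use inner regularity of Haar measure, which holds since $\Xi$ is second countable. Given $\delta>0$, choose a compact $K\subset\Omega$ with $\mu_\Xi(\Omega\setminus K)<\delta$. For $\varepsilon\in(0,1)$, the hypothesis gives a unit vector $f$ with $|A_\rho f-\chi|<\varepsilon$ on $K$. Since $|\chi|\equiv1$, this gives $|A_\rho f|\ge 1-\varepsilon$ on $K$, and hence
\[
\|A_\rho f\|_{L^p(\Omega)}\ge(1-\varepsilon)\,\bigl(\mu_\Xi(\Omega)-\delta\bigr)^{1/p}.
\]
Letting $\varepsilon,\delta\to0$ yields the claimed value.

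\textbf{Nonattainment.} Suppose a unit vector $f$ attains the supremum. Since $|A_\rho f|\le1$ everywhere and $\|A_\rho f\|_{L^p(\Omega)}^p=\mu_\Xi(\Omega)$, we get $|A_\rho f|=1$ $\mu_\Xi$-a.e.\ on $\Omega$. Equality in Cauchy--Schwarz at such $\xi$ means $\rho(\xi)f=c(\xi)f$ for some unimodular $c(\xi)$. So the closed stabilizer set
\[
H\coloneqq\{\xi\in\Xi:\ \rho(\xi)f\in\mathbb T f\}=\{\xi:\ |A_\rho f(\xi)|=1\}
\]
is a subgroup, closed by continuity of $A_\rho f$, and it has positive Haar measure since it contains almost all of $\Omega$. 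A closed subgroup of positive Haar measure is open: by the Steinhaus--Weil theorem, $HH^{-1}=H$ contains a neighbourhood of the identity. An open subgroup is also closed, so connectedness of $\Xi$ forces $H=\Xi$. Then $\mathbb C f$ is a one-dimensional invariant subspace, with character $\xi\mapsto c(\xi)=\overline{A_\rho f(\xi)}$, and $\rho$ restricts to it. This contradicts the assumption.

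\textbf{Main obstacle.} The delicate step is passing from ``$|A_\rho f|=1$ almost everywhere on $\Omega$'' to ``$\mathbb C f$ is invariant under the whole group''. I would handle it via Steinhaus--Weil applied to the closed subgroup $H$, using left Haar measure; left-invariance is enough, since $H$ itself is a group and $HH^{-1}=H$. The measure-zero exceptional set causes no trouble because $H$ is closed and contains a full-measure subset of $\Omega$.
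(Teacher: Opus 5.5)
Your proposal is correct and follows essentially the same route as the paper. The upper bound comes from Cauchy--Schwarz, the lower bound from inner regularity plus the uniform approximation of $\chi$ on a compact $K\subset\Omega$, and nonattainment from Steinhaus's theorem applied to the projective stabilizer of an optimizer together with connectedness; the only cosmetic difference is that you apply Steinhaus--Weil to $H$ itself (using $HH^{-1}=H$ and identifying $H$ with the closed level set $\{\xi:|A_\rho f(\xi)|=1\}$), whereas the paper applies it to $E\subset\Omega$ and uses $EE^{-1}\subset H_f$.
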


\begin{proof}
The upper bound is clear, so let us prove the lower bound. Let \(\varepsilon>0\) and, by inner regularity of the Haar measure, choose a compact set \(K\subset\Omega\) such that \(\mu_\Xi(K)>\mu_\Xi(\Omega)-\varepsilon\). By assumption, there exists \(f\in\cH\) with \(\|f\|_{\cH}=1\) such that
\[\sup_{\xi\in K}|A_\rho f(\xi)-\chi(\xi)|<\varepsilon.\]
Since \(|\chi(\xi)|=1\), it follows that \(|A_\rho f(\xi)|\ge 1-\varepsilon\) for every \(\xi\in K\), and thus 
\[\|A_\rho f\|_{L^p(\Omega)}\ge \|A_\rho f\|_{L^p(K)}\ge (1-\varepsilon)\mu_\Xi(K)^{1/p}.\]
Letting \(\varepsilon\downarrow0\) yields the desired formula for the optimal value.

It remains to prove nonattainment under connectedness. Suppose by contradiction that a unit vector \(f\in\cH\) attains the supremum. Since \(|A_\rho f|\le1\), equality in the \(L^p\)-bound implies $|A_\rho f(\xi)|=1$ for almost every \(\xi\in\Omega\). Set then
\[E=\{\xi\in\Omega:|A_\rho f(\xi)|=1\},\]
so that \(\mu_\Xi(E)>0\). Equality holds in Cauchy--Schwarz for every \(\xi\in E\), hence \(\rho(\xi)f\in\C f\). We are then led to consider the projective stabilizer
\[H_f=\{\xi\in\Xi : \rho(\xi)f\in\C f\}.\]
Note that \(H_f\) is a closed subgroup of \(\Xi\) and \(E\subset H_f\). By Steinhaus' theorem, \(EE^{-1}\) contains a neighbourhood of the identity, and since \(EE^{-1}\subset H_f\) we have that the subgroup \(H_f\) is open. If \(\Xi\) is connected then every open subgroup coincides with \(\Xi\), so \(H_f=\Xi\). As a consequence, \(\C f\) is a one-dimensional invariant subrepresentation of \(\rho\), contradicting the assumption.
\end{proof}

\begin{remark}
Note that the assumptions in Proposition~\ref{prop:DiagonalWeakContainment} coincide with the coefficient form of weak containment of a one-dimensional subrepresentation. There is a harmless conjugation to keep in mind: Recalling that the Hilbert space inner product is meant to be linear in the first entry, an actual subrepresentation \(\rho(\xi)u=\eta(\xi)u\) gives \(A_\rho u(\xi)=\overline{\eta(\xi)}\). 
\end{remark}

The previous result can be now specialized to several representations of interest in time-frequency analysis, such as those behind the wavelet and the shearlet transform. We examine the wavelet case in more detail, since the shearlet one follows by similar arguments after suitable adaptations to the different geometry. Consider then the affine group
\[ \Xi_{\mathrm{aff}}=\R\rtimes\R_+,\qquad (b,a)(b',a')=(b+ab',aa'), \]
with left Haar measure \(\dd{\mu_{\Xi_{\mathrm{aff}}}}(b,a)=a^{-2} \dd{b} \dd{a}\). The standard affine wavelet representation on \(L^2(\R)\) is
\[\rho(b,a)f(x)=a^{-1/2}f\left(\frac{x-b}{a}\right),\qquad (b,a)\in\Xi_{\mathrm{aff}}.\]
For fixed \(g\in L^2(\R)\), the corresponding affine voice transform is then
\[V_gf(b,a)=\langle f,\rho(b,a)g\rangle_{L^2(\R)}=a^{-1/2}\int_{\R}f(x)\overline{g\left(\frac{x-b}{a}\right)} \dd{x},\]
which coincides with the standard continuous wavelet transform of $f$ when $g$ is admissible. The corresponding scalogram \(|V_gf|^2\) is the rank-one compact-window affine QHA distribution associated with \(g\otimes g \in \cS^1\). 

It is useful to recall for later use the Fourier-side form of the same representation:
\[\widehat{\rho(b,a)f}(\omega)=a^{1/2}e^{-2\pi i b\omega}\widehat f(a\omega).\]
The sign of \(\omega\) is thus preserved, and \(L^2(\R)\) decomposes into the invariant subspaces
\[\mathcal H_+=\{f\in L^2(\R):\operatorname{ess\,supp}\widehat f\subset\R_+\},\qquad \mathcal H_-=\{f\in L^2(\R):\operatorname{ess\,supp}\widehat f\subset\R_-\}.\]
The restrictions of \(\rho\) to \(\mathcal H_+\) and \(\mathcal H_-\) are the two usual irreducible wavelet components. Note that the positive-frequency component is unitarily equivalent to the normalization used in~\cite{BBLS22}, namely \(L^2(\R_+,\dd\omega/\omega)\) with \(U(x,a)H(\omega)=e^{2\pi i x\omega}H(a\omega)\). Indeed, if \(\mathcal F_+f=\widehat f|_{\R_+}\) and \(Ch(\omega)=\omega^{1/2}h(\omega)\), then \(C\mathcal F_+:\mathcal H_+\to L^2(\R_+,\dd\omega/\omega)\) is unitary and satisfies \((C\mathcal F_+)\rho_+(b,a)(C\mathcal F_+)^{-1}=U(-b,a)\), where \(\rho_+=\rho|_{\mathcal H_+}\). We shall formulate the result for the standard representation \(\rho\), but use the Fourier-side realization in the proof, as in~\cite{BBLS22}.

We stress that, although the full representation \(\rho\) on \(L^2(\R)\) is reducible, it has no one-dimensional invariant subrepresentation. Indeed, if \(\C f\) were invariant, then at least one of the projections of \(f\) onto the reducing subspaces \(\mathcal H_+\) or \(\mathcal H_-\) would be nonzero and, since the corresponding projection commutes with \(\rho\), would span a one-dimensional invariant subspace of the corresponding irreducible component. This is impossible, because both components are infinite-dimensional and irreducible. The absence of one-dimensional subrepresentations is the ultimate representation-theoretic nonattainment mechanism behind the next result, concerning optimal concentration for the wavelet ambiguity transform:
\[ A_\rho f(b,a)=\langle f,\rho(b,a)f\rangle_{L^2(\R)}. \]

\begin{prop}
For every \(1\le p<\infty\) and every measurable \(\Omega\subset\Xi_{\mathrm{aff}}\) with \(0<\mu_{\Xi_{\mathrm{aff}}}(\Omega)<\infty\), one has
\[ \sup_{\|f\|_{L^2(\R)}=1}\|A_\rho f\|_{L^p(\Omega)}=\mu_{\Xi_{\mathrm{aff}}}(\Omega)^{1/p}, \]
but the supremum is not attained.
\end{prop}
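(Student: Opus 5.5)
The plan is to apply Proposition~\ref{prop:DiagonalWeakContainment} with $\Xi=\Xi_{\mathrm{aff}}$ and the trivial character $\chi\equiv1$. The group $\Xi_{\mathrm{aff}}=\R\rtimes\R_+$ is connected, being homeomorphic to $\R\times(0,\infty)$. We have already observed that $\rho$ has no one-dimensional invariant subrepresentation. Hence nonattainment, and the upper bound $\mu_{\Xi_{\mathrm{aff}}}(\Omega)^{1/p}$, follow at once from that proposition. What remains is to verify the weak-containment hypothesis: for every compact $K\subset\Xi_{\mathrm{aff}}$ and $\varepsilon>0$ we must find a unit vector $f$ with
\[
\sup_{(b,a)\in K}|A_\rho f(b,a)-1|<\varepsilon .
\]
This is just amenability of the affine group, but we will construct the vectors explicitly rather than appeal to it.

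We will work on the Fourier side inside $\mathcal H_+$. For $f\in\mathcal H_+$, Plancherel and the formula for $\widehat{\rho(b,a)f}$ give
\[
A_\rho f(b,a)=a^{1/2}\int_0^\infty \widehat f(\omega)\,\overline{\widehat f(a\omega)}\,e^{2\pi i b\omega}\dd{\omega}.
\]
Passing to $L^2(\R_+,\dd\omega/\omega)$ via $H(\omega)=\omega^{1/2}\widehat f(\omega)$, and then to logarithmic variables $\omega=e^{t}$, $a=e^{s}$, we obtain
\[
A_\rho f=\int_{\R} h(t)\overline{h(t+s)}\,e^{2\pi i b e^{t}}\dd{t},\qquad h(t)=H(e^t),
\]
with $\|h\|_{L^2(\R)}=\|f\|_{L^2}=1$.

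We will choose $h$ to be a wide normalized bump placed far toward $t\to-\infty$, that is, at very low frequencies. Concretely, take $h_{L,T}(t)=L^{-1/2}\phi\bigl((t+T)/L\bigr)$, where $\phi$ is smooth, compactly supported and normalized in $L^2$, with $L$ large and then $T$ large. Let $K\subset[-B,B]\times[e^{-S},e^{S}]$. Two estimates control $|A_\rho f-1|$:
\begin{itemize}
\item Since $\|h\|_{L^2}=1$, we have $\bigl|\int h(t)\overline{h(t+s)}\dd t-1\bigr|\le\|h-h(\cdot+s)\|_{L^2}\le |s|\,L^{-1}\|\phi'\|_{L^2}$, which is small uniformly for $|s|\le S$ once $L$ is large.
\item On the support of $h$, we have $t\le -T+CL$, so $|e^{2\pi i b e^t}-1|\le 2\pi B e^{-T+CL}$, which is small once $T\gg L$. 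By Cauchy--Schwarz, this phase error contributes at most $2\pi Be^{-T+CL}$.
\end{itemize}
Combining the two estimates gives uniform closeness to $\chi\equiv1$ on $K$. Note that $f\in\mathcal H_+\subset L^2(\R)$ is a genuine unit vector. The corresponding $\widehat f(\omega)=\omega^{-1/2}h(\log\omega)$ is supported in a compact subset of $(0,\infty)$, so there are no integrability issues.

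The step I expect to be the main obstacle is getting the change of variables and the normalizations exactly right. This includes the factor $a^{1/2}$, the Haar measure $\dd\omega/\omega$, and the sign of the phase, so that the identity above holds with the stated constants. In particular, one has to check that the dilation acts as an exact translation in $t$, so that the autocorrelation estimate involves no Jacobian mismatch. Once this is done, both estimates are routine. The conclusion then follows from Proposition~\ref{prop:DiagonalWeakContainment}, which yields both the value $\mu_{\Xi_{\mathrm{aff}}}(\Omega)^{1/p}$ and its nonattainment.
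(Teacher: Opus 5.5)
Your proposal is correct and follows the paper's proof. Both reduce the claim to Proposition~\ref{prop:DiagonalWeakContainment} with the trivial character, and both verify weak containment using unit vectors whose log-frequency profile is a wide normalized bump drifting toward $\omega\to0$. The only difference is cosmetic: the paper uses the normalized indicator $n^{-1/2}\omega^{-1/2}\mathbf 1_{[e^{-M_n-n},e^{-M_n}]}$ and computes the overlap length $n-|\log a|$ exactly, whereas your smooth bump lets you bound the dilation defect by the $L^2$ modulus of continuity $|s|L^{-1}\|\phi'\|_{L^2}$.
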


\begin{proof}
By Proposition~\ref{prop:DiagonalWeakContainment}, it is enough to verify the diagonal-coefficient approximation for the trivial character \(\Xi_{\mathrm{aff}}\to \mathbb T\), and that \(\rho\) has no one-dimensional invariant subrepresentation. The second issue has already been discussed, so we are left with the first one. To this aim, we will construct diagonal coefficients converging to \(1\) uniformly on compact subsets. 
First, let us define \(f_n\in L^2(\R)\) by prescribing its Fourier transform \(h_n=\widehat f_n\), with \(h_n\) supported in \(\R_+\). Choose a real sequence \(M_n\to +\infty\) and set \(I_n=[e^{-M_n-n},e^{-M_n}]\), then consider\footnote{Intuitively, the sequence \(f_n\) is designed so that its \(L^2\)-mass is uniformly spread over a logarithmic frequency interval of length \(n\) drifting to \(0\), with the purpose of making bounded dilations and translations almost invisible.}
\[ h_n(\omega)=\frac{1}{\sqrt n}\omega^{-1/2}\mathbf 1_{I_n}(\omega),\qquad \omega>0, \]
and set \(\widehat f_n(\omega)=0\) for \(\omega\le0\). By Plancherel's theorem we have \(\|f_n\|_{L^2(\R)}=\|h_n\|_{L^2(\R_+)}=1\). Moreover, since \(h_n\) is real and nonnegative, we compute
\[A_\rho f_n(b,a) = a^{1/2}\int_0^\infty e^{2\pi i b\omega}h_n(\omega)\overline{h_n(a\omega)} \dd{\omega}.\]
The integrand is nonzero exactly when \(\omega\in I_n\) and \(a\omega\in I_n\), that is, \(\omega\in I_n\cap a^{-1}I_n\). On this set it holds
\[ h_n(\omega)h_n(a\omega)=\frac1n\omega^{-1/2}(a\omega)^{-1/2}=\frac1n a^{-1/2}\omega^{-1}, \]
therefore leading to 
\[ A_\rho f_n(b,a)=\frac1n\int_{I_n\cap a^{-1}I_n}e^{2\pi i b\omega}\frac{\dd{\omega}}{\omega}. \]
Let \(K\subset \Xi_{\mathrm{aff}}\) be compact, so that there are constants \(B,L>0\) such that \(|b|\le B\) and \(|\log a|\le L\) for every \((b,a)\in K\). For \(n>L\), the change of variables \(s=\log\omega\) maps $I_n$ onto $J_n=[-M_n-n,-M_n]$ and produces 
\[ \int_{I_n\cap a^{-1}I_n}\frac{\dd{\omega}}{\omega}=|J_n\cap(J_n-\log a)|=n-|\log a|. \]
As a result, for \((b,a)\in K\) we have 
\[
\begin{aligned}
\left|A_\rho f_n(b,a)-1\right|  &=  \left| \frac1n\int_{I_n\cap a^{-1}I_n}e^{2\pi i b\omega}\frac{\dd{\omega}}{\omega}-1 \right| \\
&\le \frac1n\int_{I_n\cap a^{-1}I_n} \left|e^{2\pi i b\omega}-1\right|\frac{\dd{\omega}}{\omega} + \left|
\frac1n\int_{I_n\cap a^{-1}I_n}\frac{\dd{\omega}}{\omega}-1 \right| .
\end{aligned}
\]
The second term is bounded by \(L/n\), while the first term can be controlled after noting that \(\omega\in I_n\) implies \(\omega\le e^{-M_n}\), hence 
\[ |e^{2\pi i b\omega}-1|\le 2\pi |b|\omega\le 2\pi B e^{-M_n}, \]
and thus 
\[ \sup_{(b,a)\in K}|A_\rho f_n(b,a)-1|\le 2\pi B e^{-M_n}+\frac{L}{n}. \]
Since \(M_n\to+\infty\), we have proved that \(A_\rho f_n\to1 \) uniformly on compact subsets of \(\Xi_{\mathrm{aff}}\), and thus the claim by virtue of Proposition~\ref{prop:DiagonalWeakContainment}. 
\end{proof}

\begin{remark}
As anticipated, the nonattainment mechanism is governed by irreducibility --- through the irreducible frequency components rather than through irreducibility of the full standard representation. In the claim we kept the formulation on \(L^2(\R)\), since this is the usual domain of the continuous wavelet transform, but if one wants a purely irreducible version it suffices to restrict the whole discussion to \(\mathcal H_+\) and replace \(\rho\) by \(\rho_+=\rho|_{\mathcal H_+}\), as is standard in affine QHA~\cite{BBLS22}. In that case, for every \(1\le p<\infty\),
\[ \sup_{\|f\|_{\mathcal H_+}=1}\|A_{\rho_+}f\|_{L^p(\Omega)} = \mu_{\Xi_{\mathrm{aff}}}(\Omega)^{1/p}, \]
and the supremum is not attained. The proof is exactly the positive-frequency construction given above --- in fact, the maximizing sequence already lies in \(\mathcal H_+\).
\end{remark}

\begin{remark}
The endpoint case \(p=\infty\) is slightly different. The optimal value is 
\[ \sup_{\|h\|_{L^2}=1}\|A_\rho h\|_{L^\infty(\Omega)}=1, \]
but attainment depends on the position of \(\Omega\) --- more precisely, the supremum is attained if and only if $e=(0,1)\in\operatorname{supp}(\mathbf 1_\Omega \dd{\mu_{\Xi_{\mathrm{aff}}}})$. 

Indeed, if the identity \(e=(0,1)\) belongs to the measure-theoretic support of \(\Omega\), then \(A_\rho h(e)=1\) for every unit vector \(h\), hence giving an \(L^\infty\)-optimizer by continuity. 

Conversely, suppose that \(e\notin\operatorname{supp}(\mathbf 1_\Omega \dd{\mu_{\Xi_{\mathrm{aff}}}})\), so that there exists a neighborhood \(U\) of \(e\) such that $\mu_{\Xi_{\mathrm{aff}}}(\Omega\cap U)=0$. Assume by contradiction that a unit vector \(h\) attains the \(L^\infty\)-value, namely $\|A_\rho h\|_{L^\infty(\Omega)}=1$. For every \(k\in\N\), set
\[ E_k=\{(b,a)\in\Omega:\ |A_\rho h(b,a)|>1-1/k\}. \]
Since \(\|A_\rho h\|_{L^\infty(\Omega)}=1\), each \(E_k\) has positive Haar measure. The same holds for \(E_k\setminus U\), since \(\mu_{\Xi_{\mathrm{aff}}}(\Omega\cap U)=0\). Choose then \((b_k,a_k)\in E_k\setminus U\), so
\[ |A_\rho h(b_k,a_k)|>1-\frac1k. \]
A standard argument\footnote{It follows from \[ A_\rho h(b,a)=a^{1/2}\int_{\R}e^{2\pi i b\omega}\widehat h(\omega)\overline{\widehat h(a\omega)} \dd{\omega}, \]
the Riemann--Lebesgue lemma in $b$ and approximation of \(\widehat h\) by compactly supported functions away from \(0\) and \(+\infty\).} gives \(A_\rho h\in C_0(\Xi_{\mathrm{aff}})\), so the level set
\[ K_h=\{(b,a)\in\Xi_{\mathrm{aff}}: |A_\rho h(b,a)|\ge 1/2\} \]
is compact. For \(k\) large enough one has \((b_k,a_k)\in K_h\setminus U\), and up to subsequences we may assume $(b_k,a_k)\to(b_0,a_0)\in K_h\setminus U$. By continuity and equality in Cauchy--Schwarz's inequality we have $|A_\rho h(b_0,a_0)|=1$ and $\rho(b_0,a_0)h\in\C h$. It is well known that the standard affine wavelet representation has no nonzero eigenvectors for nonidentity group elements\footnote{If \(\rho(b,a)h=\lambda h\) with \(h\ne0\), then \((b,a)=e\). Indeed, take Fourier transforms and write \(H=\widehat h\). Then $a^{1/2}e^{-2\pi i b\omega}H(a\omega)=\lambda H(\omega)$. If \(a=1\) and \(b\ne0\), this identity forces \(H\) to be supported on a level set of \(e^{-2\pi i b\omega}\), which has measure zero, a contradiction. If \(a\ne1\), taking absolute values gives $|H(a\omega)|^2=a^{-1}|H(\omega)|^2$, implying \(h=0\): If $a>1$ the shells \(a^k[1,a)\), \(k\in\Z\), are disjoint and cover \(\R_+\), but are forced to carry the same \(L^2\)-mass, which must be zero; the case \(0<a<1\) follows by replacing \(a\) with \(a^{-1}\), and similar arguments on \(-a^k[1,a)\), \(k\in\Z\), rule out mass on \(\R_-\).}, therefore the contradictory conclusion \(K_h \setminus U \ni (b_0,a_0)=e\). 
\end{remark}

\begin{remark}
As anticipated, the same mechanism applies to the standard connected continuous shearlet representation \cite{shearlet1}. Let \(A_a=\mathrm{diag}(a,\sqrt a)\), \(S_s=\begin{pmatrix}1&s\\0&1\end{pmatrix}\), and \(\Xi_{\mathrm{sh}}=\R^2\rtimes\{S_sA_a:s\in\R,\ a>0\}\). For the standard representation
\[\rho(x,s,a)f(y)=|\det(S_sA_a)|^{-1/2}f((S_sA_a)^{-1}(y-x)),\]
the only change with respect to the affine wavelet proof is the construction of the weakly contained vectors. On the Fourier side one can use the coordinates \(t=\log\xi_1\), \(r=\xi_2/\xi_1\) on the half-plane \(\xi_1>0\), where the dual shearlet action is 
\[(t,r)\mapsto(t+\log a,a^{-1/2}(r+s)).\] 
The logarithmic intervals \(I_n\) in the wavelet proof are thus replaced by the parabolic boxes
\[E_n=\{(\xi_1,\xi_2):\xi_1=e^t,\ \xi_2=re^t,\ t\in[-M_n-n,-M_n],\ |r|\le e^{-t/2}\}.\]
Setting \(\dd{\nu(\xi)}=\xi_1^{-3/2}\dd{\xi}\), these boxes have measure \(\nu(E_n)=2n\) and the functions \(\widehat f_n=(2n)^{-1/2}\xi_1^{-3/4}\mathbf 1_{E_n}\) satisfy \(A_\rho f_n\to1\) uniformly on compact subsets of \(\Xi_{\mathrm{sh}}\), by the same overlap argument used in the wavelet case. Proposition~\ref{prop:DiagonalWeakContainment} therefore applies, the supremum not being attained since the translation subgroup excludes one-dimensional invariant subrepresentations.
\end{remark}

The same weak-containment principle lifts to operator level. Define 
\[\alpha_\xi(T)=\rho(\xi)T\rho(\xi)^*,\qquad D_\rho T(\xi)=\langle T,\alpha_\xi(T)\rangle_{\cS^2(\cH)}.\] 
This coincides with the diagonal coefficient of the conjugation representation \(\alpha\) on \(\cS^2(\cH)\). Applying Proposition~\ref{prop:DiagonalWeakContainment} to \(\alpha\) gives the following immediate consequence.

\begin{corollary}\label{cor:OperatorDiagonalWeakContainment}
Assume that there exists a continuous homomorphism \(\chi \colon \Xi\to\mathbb T\) such that, for every compact set \(K\subset\Xi\) and every \(\varepsilon>0\), there is \(T\in\mathcal S^2(\mathcal H)\) with \(\|T\|_{\mathcal S^2}=1\) satisfying 
\[ \sup_{\xi\in K}|D_\rho T(\xi)-\chi(\xi)|<\varepsilon . \] 
Then, for every \(1\le p<\infty\) and every measurable \(\Omega\subset\Xi\) with \(0<\mu_\Xi(\Omega)<\infty\), \[\sup_{\|T\|_{\cS^2}=1}\|D_\rho T\|_{L^p(\Omega)}=\mu_\Xi(\Omega)^{1/p}.\] If, in addition, \(\Xi\) is connected and \(\alpha\) contains no one-dimensional invariant subrepresentation on \(\cS^2(\cH)\), then the supremum is not attained.
\end{corollary}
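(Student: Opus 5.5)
The plan is to reduce Corollary~\ref{cor:OperatorDiagonalWeakContainment} to Proposition~\ref{prop:DiagonalWeakContainment}, applied to the conjugation representation rather than to $\rho$. Throughout, $\widetilde{\cH}\coloneqq\cS^2(\cH)$ denotes the Hilbert--Schmidt space with inner product $\langle T,R\rangle_{\cS^2}=\tr(TR^*)$.

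\textbf{Step 1: $\alpha$ is an admissible representation.} First I check that $\alpha\colon\Xi\to\mathcal U(\widetilde\cH)$, $\alpha_\xi(T)=\rho(\xi)T\rho(\xi)^*$, satisfies the standing assumptions of Proposition~\ref{prop:DiagonalWeakContainment}.

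\emph{Separability.} The space $\widetilde\cH$ is complex and separable, since it is isomorphic to $\cH\otimes\overline{\cH}$.

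\emph{Unitarity and homomorphism.} Each $\alpha_\xi$ is unitary on $\widetilde\cH$ by unitary invariance of the Hilbert--Schmidt norm. The identity $\alpha_{\xi\eta}=\alpha_\xi\alpha_\eta$ follows from the homomorphism property of $\rho$. For a projective $\rho$, the multipliers cancel in the conjugation, so $\alpha$ is a genuine representation.

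\emph{Strong continuity.} This is the only point that needs an argument. For a rank-one operator $T=u\otimes v$ we have $\alpha_\xi(T)=\rho(\xi)u\otimes\rho(\xi)v$, so
\[\|\alpha_\xi(T)-T\|_{\cS^2}\le\|\rho(\xi)u-u\|\,\|v\|+\|u\|\,\|\rho(\xi)v-v\|\to0 \quad\text{as } \xi\to e.\]
This extends to finite-rank operators by linearity. It then extends to all of $\widetilde\cH$ by density of finite-rank operators together with the uniform bound $\|\alpha_\xi\|=1$, via an $\varepsilon/3$ argument.

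\textbf{Step 2: identify the diagonal coefficient.} The diagonal coefficient of $\alpha$ is exactly
\[A_\alpha T(\xi)=\langle T,\alpha_\xi(T)\rangle_{\widetilde\cH}=D_\rho T(\xi).\]
Under this identification:
\begin{itemize}
\item the approximation hypothesis of Corollary~\ref{cor:OperatorDiagonalWeakContainment} is literally the hypothesis of Proposition~\ref{prop:DiagonalWeakContainment} for $\alpha$;
\item the normalization $\|T\|_{\cS^2}=1$ corresponds to unit vectors in $\widetilde\cH$;
\item $\Omega$ and the Haar measure $\mu_\Xi$ are unchanged.
\end{itemize}

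\textbf{Step 3: conclude.} The first part of Proposition~\ref{prop:DiagonalWeakContainment} gives the optimal value $\mu_\Xi(\Omega)^{1/p}$. Its second part gives nonattainment, under the stated hypotheses that $\Xi$ is connected and that $\alpha$ has no one-dimensional invariant subrepresentation on $\widetilde\cH$.

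\textbf{Main obstacle.} The only step that is not pure bookkeeping is the strong continuity of $\alpha$ in Step~1, which is handled by the rank-one estimate and density. No additional analysis is required beyond Proposition~\ref{prop:DiagonalWeakContainment}.
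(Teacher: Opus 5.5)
Your proposal is correct and matches the paper's argument: the paper also identifies $D_\rho T$ with the diagonal coefficient of the conjugation representation $\alpha$ on $\cS^2(\cH)$ and then invokes Proposition~\ref{prop:DiagonalWeakContainment}. Your Step~1 is a useful addition, since the paper leaves it implicit. In particular, the rank-one-plus-density proof that $\alpha$ is strongly continuous is what makes the stabilizer $H_f$ in the nonattainment argument a closed subgroup.
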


\begin{remark}
It is important to distinguish the operator-level problem from the related scalar one. Consider for instance the Heisenberg case: Total correlation is optimized over all Hilbert--Schmidt operators, whereas the ambiguity problem corresponds only to the rank-one restriction. Indeed, for \(T=f\otimes f\), one has
\[ D_\pi(f\otimes f)(z)=|A_\pi f(z)|^2, \qquad  \|D_\pi(f\otimes f)\|_{L^p(\Omega)}=\|A_\pi f\|_{L^{2p}(\Omega)}^2.\]
The sharp Cauchy--Schwarz value for total correlation is thus obtained only after allowing general Hilbert--Schmidt operators, while the rank-one subclass remains subject to the uncertainty constraints responsible for the strict suboptimality of the ambiguity problem, for instance on discs in the superquadratic regime. This is also consistent with Proposition~\ref{prop:DiagonalWeakContainment}, since the Schr\"odinger representation of the Heisenberg group has a nontrivial central phase, and therefore fails to satisfy the weak containment assumption of that result, while in the conjugation representation \(T\mapsto\pi(z)T\pi(z)^*\) the projective phase cancels and thus the corresponding representation \(\pi\otimes\overline{\pi}\) may weakly contain the trivial representation. 

These remarks point towards a broader picture, since weak containment of the trivial representation for the conjugation representation, identified with \(\rho\otimes\overline{\rho}\) after the standard Hilbert--Schmidt tensor identification, is a condition closely connected to amenability for unitary representations in the sense of Bekka \cite{bekka}, and spectral-gap phenomena. We plan to explore these and related aspects in future investigations. 
\end{remark}

We can specialize to the affine QHA setting: For \(T\in\cS^2(L^2(\R_+))\), we have
\[ D_\rho T(b,a)\coloneqq \langle T,\alpha_{(b,a)}(T)\rangle_{\cS^2},\qquad \alpha_{(b,a)}(T)=\rho(b,a)T\rho(b,a)^*. \]
Here \(\rho\) denotes the Fourier-side positive-frequency affine representation, unitarily equivalent to the logarithmic-frequency normalization used in~\cite{BBLS22}. This is the diagonal operator-correlation associated with the affine-QHA conjugation action. Equivalently, it is the autocorrelation case of the affine coefficient \(C_R T(b,a)=\langle T,\alpha_{(b,a)}(R)\rangle_{\cS^2}\). 

Corollary~\ref{cor:OperatorDiagonalWeakContainment} applies directly, since the weak-containment hypothesis is verified by the rank-one operators \(T_n=f_n\otimes f_n\) with \(f_n\) constructed above:
\[D_\rho T_n(b,a)=|A_\rho f_n(b,a)|^2\to1,\quad \text{uniformly on compact subsets of } \Xi_{\mathrm{aff}}.\]
The absence of one-dimensional invariant subrepresentations follows as usual from irreducibility: If \(\alpha_{(b,a)}(T)=\chi(b,a)T\) for some nonzero \(T\in\cS^2(L^2(\R_+))\) and some continuous homomorphism \(\chi:\Xi_{\mathrm{aff}}\to\mathbb T\), then \(T^*T\) commutes with \(\rho(b,a)\) for every \((b,a)\), hence \(T^*T=cI\) by Schur's lemma. Since \(L^2(\R_+)\) is infinite-dimensional, this is impossible for a nonzero Hilbert--Schmidt operator.

To summarize, while the compact-window QHA mechanism is not specific to the Euclidean Heisenberg representation and fixed-window concentration obeys a general compactness phenomenon, diagonal voice concentration splits into distinct regimes. The affine example shows that the positive Heisenberg ambiguity theorem is not a high-level consequence of coorbit or QHA covariance: The representation-theoretic machinery gives fixed-window compactness and the strict-gap formalism, but the size of the defect level is ultimately representation-dependent. To be precise, if the equality $\Lambda_{p,\Omega}(\rho)=\Lambda^{\mathrm{ess}}_{p,\Omega}(\rho)$ holds, attainment may still occur only when the maximizing defects arise from exact symmetries that can be recentered, as in the Heisenberg ambiguity problem, but also with a highly nontrivial analytic input: Local coorbit bounds are needed to convert nonvanishing concentration into a nonzero weak profile after time-frequency recentering \cite{Nicola-Romero-Trapasso-2022}, or careful control of representation-dependent phenomena as in \cite{Stra-Svela-Trapasso-2025}. The affine wavelet representation falls outside this class: The logarithmically spread sequence $f_n$ is weakly null and satisfies \(A_\rho f_n\to1\) locally uniformly, hence the defect level is maximal and no optimizer exists.

We conclude by highlighting that infinite-dimensional irreducibility by itself does not decide the attainment problem: The affine wavelet representation gives nonattainment, whereas there are also irreducible infinite-dimensional representations for which the diagonal problem attains optimal concentration for suitable observation sets. Consider for instance \(\Xi=N\rtimes\Z\), where \(N\) is a countable discrete abelian group, and take a character \(\chi\in\widehat N\) whose orbit under the dual \(\Z\)-action has trivial stabilizer. The associated induced representation on \(\ell^2(\Z)\) is irreducible and can be written in diagonal form as $\rho(n,0)e_j=\chi_j(n)e_j$, where \(\chi_j\) is the \(j\)-th orbit translate of \(\chi\). Now, if \(\Omega\subset N\times\{0\}\) is finite and nonempty, then for every \(j\in\Z\) we have
\[ |A_\rho e_j(n,0)|=|\langle e_j,\rho(n,0)e_j\rangle|=1,\qquad (n,0)\in\Omega. \]
As a consequence, for every \(1\le p<\infty\) the optimal value is achieved: $\|A_\rho e_j\|_{L^p(\Omega)}=|\Omega|^{1/p}$. 

\section*{Acknowledgments}
We are happy to thank Franz Luef for discussions and encouragement. We also gratefully acknowledge inspiring discussions over the years on the topics of this manuscript with the coauthors of the series: Fabio Nicola, José Luis Romero, and Federico Stra.

Original motivations for this work date back to the first QHA workshop held in Trondheim (June 2023). The first author express his gratitude to Politecnico di Torino for hospitality on the occasion of a long visit in Spring 2025, during which the work started. 
\printbibliography
\end{document}